\theoremstyle{plain}
\newtheorem{theorem}{Theorem}
\newtheorem{proposition}[theorem]{Proposition}
\newtheorem{lemma}[theorem]{Lemma}
\newtheorem{corollary}[theorem]{Corollary}
\theoremstyle{definition}
\newtheorem{definition}[theorem]{Definition}
\theoremstyle{remark}
\newtheorem{remark}[theorem]{Remark}
\newtheorem*{assumption*}{Assumption}
\newcommand{\R}{\mathbb{R}}
\newcommand{\op}{\mathrm{op}}
\newcommand{\Rbar}{\overline{\mathbb{R}}}
\newcommand{\TP}{\mathbb{TP}}
\newcommand{\cat}[1]{\mathcal{#1}}
\DeclareMathOperator{\Nuc}{\mathrm{Nuc}}
\DeclareMathOperator{\pnuc}{\mathbb{P}\mathrm{Nuc}}
\DeclareMathOperator{\im}{\mathrm{Im}}
\DeclareMathOperator{\fix}{\mathrm{Fix}}
\newcommand{\pcat}[1]{\mathbb{P}\mathcal{#1}}
\newcommand{\proj}{Proj}
\DeclareMathOperator{\Cell}{\mathrm{Cell}}
\newcommand{\paren}[1]{\left(#1\right)}
\newcommand{\set}[1]{\left\{#1\right\}}
\begin{document}

\title{Projective metric geometry of tropical nuclei: gap matrices, event loci, and order chambers}

\author[1]{\fnm{Juan Luis} \sur{Gastaldi}}
\email{juan.luis.gastaldi@inf.ethz.ch}
\equalcont{The authors contributed equally to this work.}

\author[2]{\fnm{Samantha} \sur{Jarvis}}
\email{Samantha.jarvis@qc.cuny.edu}
\equalcont{The authors contributed equally to this work.}

\author[3]{\fnm{Thomas} \sur{Seiller}}
\email{thomas.seiller@cnrs.fr}
\equalcont{The authors contributed equally to this work.}

\author[2,4]{\fnm{John} \sur{Terilla}}
\email{jterilla@gc.cuny.edu}
\equalcont{The authors contributed equally to this work.}

\affil[1]{\orgname{ETH Zurich}, \orgaddress{\city{Zurich}, \country{Switzerland}}}
\affil[2]{\orgname{Queens College, City University of New York}, \orgaddress{\city{New York}, \state{NY}, \country{USA}}}
\affil[3]{\orgname{CNRS}, \orgaddress{\city{Paris}, \country{France}}}
\affil[4]{\orgname{The Graduate Center, City University of New York}, \orgaddress{\city{New York}, \state{NY}, \country{USA}}}

\abstract{The tropical row span and column span of a real matrix are,
	from the polyhedral point of view, different objects living in different
	ambient spaces.  These polytopes are known to be combinatorially isomorphic
	as polyhedral complexes; we prove that they are isometric
	under a Hilbert projective metric.  We show that this isometry, along with a considerable
	amount of additional metric and polyhedral structure, is a direct consequence
	of a single categorical construction: the Isbell nucleus of the matrix,
	viewed as a profunctor enriched over the extended reals.

	The projective nucleus carries two canonical structures
	inherited from enrichment.
	The first is a Hilbert projective metric, with respect to which the
	Isbell conjugate maps are mutually inverse isometries---this is the
	Isometry Theorem.
	The second is a polyhedral cell decomposition cut out by the Isbell
	inequalities, recovering the type decomposition of tropical convexity.

	These two structures are linked pointwise by the \emph{gap matrix}.  
	The Events Theorem identifies each positive entry of the gap matrix with
	the exact projective distance to the locus where the corresponding
	inequality becomes tight:
	algebraic slack in the Isbell inequalities equals geometric distance
	to the cell walls.
	Thresholding the gap matrix at successive radii produces a constructible sheaf
	of formal concept lattice towers, extracting discrete algebraic structure from
	the continuous geometry at each point.

	In the square case there is generically a unique
	full-dimensional cell.  The Centering Theorem identifies its Chebyshev
	center---the point maximally insulated from all cell walls---and shows that
	the optimal radius equals the minimum directed cycle mean of an associated
	digraph, connecting the projective geometry of the nucleus to the classical
	theory of optimal assignments.
}

\keywords{tropical convexity, tropical polytopes, Hilbert projective metric, polyhedral complexes, Isbell duality, enriched category theory, formal concept analysis, minimum cycle mean}
\pacs[2020 MSC Classification]{14T10, 52B11, 15A80, 18N10, 06A15, 90C27}

\maketitle

\section{Introduction}\label{sec:introduction}
A real $m\times n$ matrix $M$ gives rise to two tropical polytopes
in tropical projective spaces: a row span in $\TP^{n-1}$ and a
column span in $\TP^{m-1}$.
Each carries a polyhedral decomposition into cells according to
combinatorial type.
A foundational result of Develin and
Sturmfels~\cite{develinSturmfels2004tropical} is that these two
polytopes are isomorphic as polyhedral complexes: there is a
canonical bijection between their cells that preserves combinatorial
type.
Their proof is a direct polyhedral argument, relying on the structure
of the type decomposition.
The starting point of this paper is the observation that a stronger
fact holds and admits a conceptual explanation: the row and column
spans are not merely combinatorially isomorphic but also isometric
under the Hilbert projective metric, and this isometry is an
immediate consequence of the fact that both are projections of a
single intrinsic object, the \emph{Isbell nucleus} of~$M$.

The Isbell nucleus is defined for any $\Rbar$-enriched profunctor
$M\colon\cat C^{\op}\otimes\cat D\to\Rbar$, where $\Rbar$ is the monoidal poset
$\Rbar=([-\infty,\infty],\le,+)$.
The Isbell conjugates $M^*$ and $M_*$ form an adjunction between
$\Rbar$-enriched presheaves on $\cat C$ and copresheaves on
$\cat D$:
\[
	M^*f(d)=\min_{c\in\cat C}\bigl(M(c,d)-f(c)\bigr),
	\qquad
	M_*g(c)=\min_{d\in\cat D}\bigl(M(c,d)-g(d)\bigr).
\]
The nucleus $\Nuc(M)$ is the fixed-point locus, the set of
pairs $(f,g)$ with $M^*f=g$ and $M_*g=f$.
When $\cat C$ and $\cat D$ are finite sets and $M$ is a real matrix,
the projectivization $\pnuc(M)$ is a compact polyhedral space
carrying two canonical structures: a \emph{projective metric} coming
from the enriched hom, and a \emph{polyhedral cell decomposition}
coming from the Isbell inequalities $f(c)+g(d)\le M(c,d)$.
The cell decomposition recovers the type decomposition of tropical
convexity: a point is classified by the incidences $(c,d)$ for which
equality holds, which we call \emph{witness pairs}.
Both structures are invariant under external gauge
transformations---reweighting $M$ by row and column
potentials---since these act by isometries.
The nucleus is the intrinsic geometric object; the matrix $M$ is
just a coordinate presentation.
In particular, $\pnuc(M)$ maps isometrically onto both the tropical
row span and the tropical column span: there is no mystery about why
the two spans are isometric, because they are projections of a single
intrinsic space.

The central new tool is the \emph{gap matrix} defined at each nucleus point $(f,g)$ 
by
$\delta^{(f,g)}(c,d)=M(c,d)-f(c)-g(d)$.  It is a non-negative matrix with at least one
zero in every row and column.
Its zero entries record the witnesses and hence the combinatorial cell.
Its positive entries, which measure the slack in each inequality, turn
out to have a sharp geometric meaning.
The three main results of the paper are the following.

\paragraph{Isometry Theorem (Theorem~\ref{thm:projective-isometries}).}
	      The projective Isbell maps $M^*$ and $M_*$ restrict to mutually
	      inverse isometries between the presheaf and copresheaf
	      realizations of $\pnuc(M)$ for the Hilbert projective metric.
	      The tropical row and column spans 
	      are here revealed as two projections of a single intrinsic
	      space.

\paragraph{Events Theorem (Theorem~\ref{thm:events}).}
	      Each positive entry of the gap matrix equals the exact
	      projective distance to the corresponding event locus:
	      \[
		      d_{\pnuc}\paren{(f,g),\mathcal E_{c,d}}=\delta^{(f,g)}(c,d),
	      \]
	      where $\mathcal E_{c,d}$ is the locus on which $(c,d)$ is
	      a witness pair.
	      In other words, algebraic slack in the Isbell inequalities
	      \emph{is} geometric distance to the cell walls.
	      This identity reflects a rigidity special to the Hilbert metric
	      and the linear structure of the Isbell conditions:
	      in a generic polyhedral-metric setting, constraint slack and
	      boundary distance can differ by arbitrary distortion factors.

\paragraph{Centering Theorem (Theorem~\ref{thm:centering}).}
	      In the square case $|\cat C|=|\cat D|=n$, there is generically one
	      full-dimensional cell.
	      The Chebyshev center of this cell---the point maximizing the
	      minimum distance to all cell walls---has optimal radius equal to the
	      minimum directed cycle mean of an associated weighted digraph.
	      At the center, the smallest positive gap is achieved with multiplicity
	      at least $n$ (generically exactly~$n$), identifying $n$ equidistant
	      event loci.
	      The minimum cycle mean is computable in $O(n^3)$ time by Karp's
	      algorithm~\cite{Karp1978}, connecting
	      the projective geometry of the nucleus to the classical theory of
	      optimal assignment~\cite{Kuhn1955,BurkardDellAmicoMartello2009}.

\paragraph{Further structures.}
The Events Theorem has further structural consequences.
Thresholding the gap matrix at a value $\varepsilon > 0$ records which
event loci lie within projective distance $\varepsilon$ of a given
point.
The resulting Boolean relation is itself a profunctor, and its Isbell
nucleus is a finite lattice: a formal concept lattice in the sense
of Wille.
As $\varepsilon$ increases, new witness pairs enter and the lattice grows. Since the gap matrix has finitely many distinct values, this growth factors through a finite tower of concept lattices, one for each distinct gap value. Wall-crossing between consecutive floors is governed by canonical mergers.
The tower depends on the basepoint $(f,g)$, but only through the
ordering of the gap entries: within each \emph{order chamber}---a
region where the ordering is constant---the tower is invariant.
Moving to a face of the chamber complex merges consecutive floors,
producing canonical specialization maps.  The order chambers give a
refinement of the polyhedral decomposition defined by
witnesses and appears to be new in tropical geometry.

This pointed thresholding procedure deserves a moment of emphasis.
The naive operation of thresholding the matrix $M$ directly ignores the
geometry of the nucleus and produces no meaningful structure, as far as we can see.
But thresholding the gap matrix $\delta^{(f,g)}$---which depends on one's
position in the nucleus---extracts a principled family of discrete algebraic
structures (formal concept lattices with joins, meets, and Galois
connections) from the continuous projective geometry, organized into a
constructible sheaf over the polyhedral order-chamber complex.

\paragraph{A running example.}
To keep the discussion concrete, we repeatedly return to the following matrix.
Let $C=\{c_0,c_1,c_2\}$ and $D=\{d_1,d_2,d_3,d_4\}$ and set
\[
	M=
	\begin{bmatrix}
		0.7 & 1.5  & 1.7 & -1.3 \\
		1.2 & 2.6  & 0.1 & 2.2  \\
		2.0 & -1.6 & 2.0 & -2.9
	\end{bmatrix}.
\]
In this case $\pnuc(M)$ is a two-dimensional polyhedral complex.
Work in the gauge slice $f(c_0)=0$ and consider the point $(f,g)$ with
\[
	f=(0,0,0),\qquad g=(0.7,-1.6,0.1,-2.9).
\]
Its gap matrix is
\[
	\delta^{(f,g)}=
	\begin{bmatrix}
		0   & 3.1 & 1.6 & 1.6 \\
		0.5 & 4.2 & 0   & 5.1 \\
		1.3 & 0   & 1.9 & 0
	\end{bmatrix},
\]
whose positive values satisfy
\[
	0<0.5<1.3<1.6=1.6<1.9<3.1<4.2<5.1.
\]
The zero pattern
$\{(c_0,d_1),(c_2,d_2),(c_1,d_3),(c_2,d_4)\}$
determines the witness cell containing $(f,g)$, while the positive entries are---by the Events Theorem---the exact projective distances from $(f,g)$ to the surrounding cell walls.
The tie at $1.6$ is the value at which the order of the gap entries changes, producing a wall in the order-chamber decomposition.
Figure~\ref{fig:cellular_events} illustrates this gap-value-as-distance phenomenon.

\begin{figure}[t]
	\centering
	\includegraphics[width=0.90\linewidth]{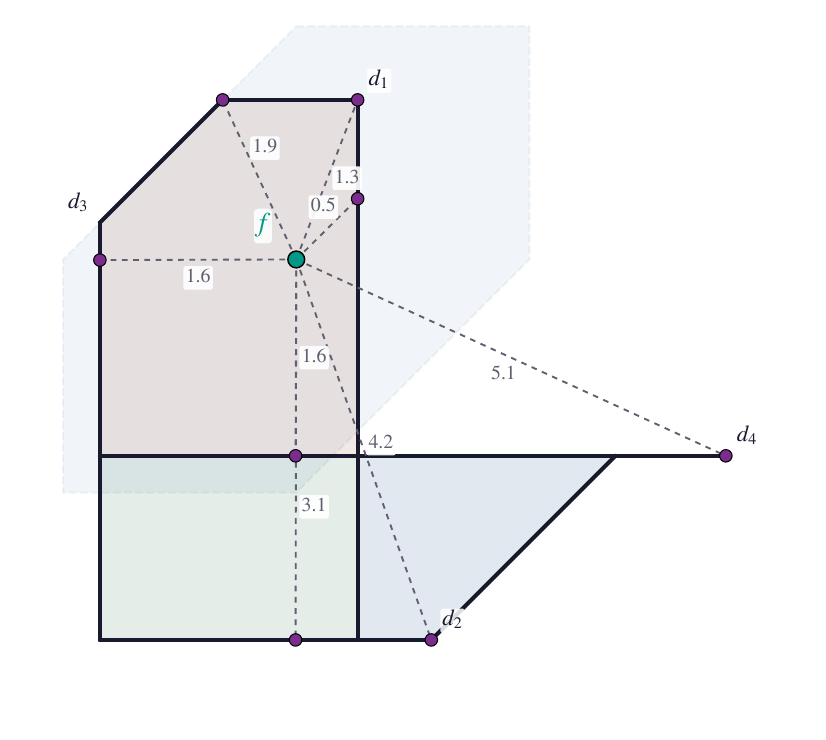}
	\caption{The running $3\times 4$ example in an affine chart of $\pnuc(M)$.
		The green point is the basepoint $(f,g)$, and the shaded regions are the $2$-cells of the witness decomposition.
		For each positive gap value, the corresponding marked event point lies on the first event locus encountered at that radius, as indicated by the dashed segment from $(f,g)$.
		The repeated value $1.6$ is the tie that later becomes the order wall between two adjacent chamber refinements.}
	\label{fig:cellular_events}
\end{figure}

\subsection{Relation to existing work}
On the tropical side, Develin and Sturmfels introduced the polyhedral theory of tropical convexity and its decomposition into combinatorial types~\cite{develinSturmfels2004tropical}; see also \cite{MaclaganSturmfels2015ITG} for background.
The half-space and distance formulas that underlie our Events Theorem are close in spirit to work of Gaubert and Katz on max-plus convexity and tropical half-spaces~\cite{GaubertKatz2006MaxPlusConvexGeometry,GaubertKatz2011MinimalHalfspaces}; compare also \cite{Nitica01062007}.
Gaubert and Sergeev~\cite{GaubertSergeev2013CyclicProjectors} study cyclic projectors and separation in idempotent convex geometry, using the Hilbert projective metric in the tropical setting; our $1$-Lipschitz and isometry results for the Isbell maps can be seen as complements to their spectral approach.

On the categorical side, the idea of treating generalized metric spaces as categories enriched over an ordered monoid goes back to Lawvere~\cite{lawvere1973metric}.
Isbell conjugacy originates in Isbell's paper on adequate subcategories~\cite{Isbell1960Adequate}, and Avery and Leinster~\cite{averyLeinster2021isbell} give a modern treatment over an arbitrary base.
In the setting of Lawvere metric spaces, Willerton developed the Isbell completion in detail~\cite{willerton2013tight,willerton2014galois,willerton2015legendre}, relating it to the Legendre--Fenchel transform.
Via the $c$-transform, the same formalism also connects to optimal transport~\cite{ambrosioGigli2013usersGuide}.

The bridge from enriched-category theory to tropical and directed-metric phenomena has been explored from several directions.
Elliott and Fujii observed that Isbell-type nuclei provide a natural home for tropical polytopes~\cite{elliott2017fuzzy,fujii2019enriched}.
Bradley, Terilla, and Vlassopoulos use enrichment over $[0,1]\cong[0,\infty]$ to organize linguistic structure~\cite{BradleyTerillaVlassopoulos2022EnrichedLanguage}, while Gaubert and Vlassopoulos develop related directed-metric and tropical-polyhedral ideas in the setting of large language models~\cite{GaubertVlassopoulos2024DirectedMetric}.
Recent work of Bradley and Vigneaux studies magnitude and magnitude homology for categories of texts enriched by language-model probabilities~\cite{BradleyVigneaux2025MagnitudeTexts}.
Background and motivation for these applications are reviewed in \cite{BradleyGastaldiTerilla2024}.

The Centering Theorem connects to the combinatorial optimization literature.
The minimum cycle mean of a weighted digraph is a classical invariant studied by Karp~\cite{Karp1978}, and the
assignment-problem duality underlying the Chebyshev LP is closely related to the Hungarian method~\cite{Kuhn1955,BurkardDellAmicoMartello2009}; see \cite[Ch.~17]{Schrijver2003} for a textbook treatment.
In a related but distinct direction, Akian, Gaubert, Qi, and Saadi~\cite{AkianGaubertQiSaadi2023TropicalRegression} prove that the inner radius of a Hilbert ball inscribed in a tropical polyhedron equals the value of a mean payoff game; our Centering Theorem can be viewed as a pointwise refinement of this circle of ideas, identifying the Chebyshev radius of a specific cell within the nucleus with the minimum cycle mean of an explicit digraph derived from the optimal assignment.

These constructions fit into a broader program in which nuclei acquire additional compatible structure.
In particular, nuclei arising from profunctors compatible with monoidal data carry further operations relevant to linear realizability; see \cite{Jarvis2025NucleusProfunctor,seiller-hdr,GastaldiJarvisSeillerTerillaLinearRealizability}.
The finite real case studied here exhibits the projective metric geometry, witness cells, event loci, order chambers, the associated constructible sheaf of lattice towers, and the Chebyshev centering of full-dimensional cells.

Our conventions differ slightly from some of the literature, and these differences matter for the geometry.
We therefore keep Section~\ref{sec:nucleus} self-contained, but restrict it to the categorical material used later.
The cocompletion formulas behind the Yoneda-density statements are standard; see~\cite{Kelly1982}.

\medskip
\noindent
\textbf{Organization of the paper.}
Section~\ref{sec:nucleus} fixes conventions for $\Rbar$-enriched categories, profunctors, and the Isbell adjunction.
Section~\ref{sec:geometry} develops the projective metric geometry and proves the Isometry Theorem.
Section~\ref{subsec:polyhedral_pnuc} develops the witness polyhedral structure, proves the Events Theorem, introduces order chambers, and shows how pointed thresholding of the gap matrix assembles into chamberwise concept-lattice towers with canonical face-specialization maps.
Section~\ref{sec:centering} specializes to the square case and proves the Centering Theorem.

\section{Isbell duality over the extended reals}\label{sec:nucleus}

We fix conventions for $\Rbar$-enriched categories, the Isbell adjunction associated to a profunctor
$M\colon\cat C\nrightarrow\cat D$, and its nucleus $\Nuc(M)$.
The weighted-colimit formulas behind the density identities are standard; see~\cite{Kelly1982}.
\subsection{The arithmetic of $\Rbar$}\label{subsec:Rbar}

We begin with the real numbers ordered by $\le$.  Viewed as a category, its limits are infima and its colimits are suprema.
Adjoining $\pm\infty$ makes $\Rbar=\R\cup \{\pm \infty\}$ into a poset category that is complete and cocomplete:
infema and suprema of arbitrary subsets exist.
Addition of real numbers defines a symmetric monoidal structure which extends to
$(\Rbar,\le,+,0)$ by declaring $-\infty$ absorbing:
$-\infty + y = -\infty$ for all $y\in\Rbar$, including
$y=+\infty$.
This convention is dictated by the requirement that each translation
$x\mapsto x+y$ preserve colimits (suprema) and hence have a right adjoint:
any other extension of addition to~$\pm\infty$ would violate this.
The right adjoint is \emph{residuation}
$z - y := [y,z]$, characterized by $x + y \le z \iff x \le z - y$
and given explicitly~by
\begin{equation}\label{eq:minus}
  z - y = \sup\set{x\in\Rbar \mid x + y \le z}.
\end{equation}
On finite reals this is ordinary subtraction.
At the boundary one has $\infty - \infty = \infty$ and
$-\infty - (-\infty) = \infty$; in particular, subtracting $-\infty$
is not the same as adding $+\infty$.
Three properties are used throughout: residuation $z\mapsto z-y$
is monotone and preserves infima (being a right adjoint); the reverse map $z\mapsto x-z$
is antitone; and $-\infty$ is absorbing for addition while $\infty$
is absorbing for residuation.

We use $\Rbar$ rather than Lawvere's
$([0,\infty],\ge,+,0)$ because both infinite values play a
geometric role.
Subsets $A\subseteq C\times\R$ arise naturally in our
applications, and they determine presheaves $C\to\Rbar$ by
$c\mapsto\sup\set{r\in\R\mid(c,r)\in A}$: one needs $f(c)=-\infty$
when $A$ contains no point over~$c$ and $f(c)=+\infty$ when it
contains all of them.
More immediately, in Lawvere's base $([0,\infty],\ge,+,0)$
the monoidal unit $0$ is the top element, so residuation is truncated: $z - y = \max(z-y, 0)$. 
In $\Rbar$ the unit $0$ sits in the interior, residuation is ordinary subtraction, 
and the resulting translation action on (co)presheaves produces 
affine geometry—then projective geometry after quotienting by constant shifts.

\subsection{Categories, functors, presheaves, and profunctors}\label{subsec:yoneda}
An \emph{$\Rbar$-category} $\cat C$ consists of a set
  $\mathrm{Ob}(\cat C)$ and hom-values
  $\cat C(c,c')\in\Rbar$ satisfying $\cat C(c,c)=0$ (identities) and
  $\cat C(c,c')+\cat C(c',c'')\le\cat C(c,c'')$ (composition).
  The \emph{opposite} $\cat C^{\op}$ reverses the hom-values:
  $\cat C^{\op}(c,c')=\cat C(c',c)$.
  An \emph{$\Rbar$-functor} $F\colon\cat C\to\cat D$ is a map on
  objects satisfying $\cat C(c,c')\le\cat D(Fc,Fc')$; the enriched
  hom between functors is $[\cat D,\Rbar](F,G)=\inf_{c\in\cat C}\cat D(Fc,Gc)$, 
  making $[\cat D,\Rbar]$, the set of $\Rbar$-functors from $\cat{D}$ to $\Rbar$, into an $\Rbar$-category.
  If it is not ambiguous, we abbreviate $[\cat D,\Rbar](F,G)$ by $[F,G].$

A \emph{presheaf} on $\cat C$ is an $\Rbar$-functor $f\colon\cat C^{\op}\to\Rbar$;
a \emph{copresheaf} on $\cat D$ is an $\Rbar$-functor $g\colon\cat D\to\Rbar$.
We regard copresheaves as objects of $[\cat D,\Rbar]^{\op}$ (same underlying functions, reversed pointwise order),
so that the Isbell conjugates below become $\Rbar$-functors.
The enriched homs are computed pointwise:
\[
	[\cat C,\Rbar](f,f')=\inf_{c\in\cat C}\paren{f'(c)-f(c)},\qquad
	[\cat D,\Rbar]^{\op}(g,g')=\inf_{d\in\cat D}\paren{g(d)-g'(d)}.
\]
In particular, the underlying order on presheaves is the pointwise order, while copresheaves in
$[\cat D,\Rbar]^{\op}$ carry the opposite of the pointwise order.

The enriched Yoneda lemma says that
$[\cat C(-,c),f]=f(c)$ and $[g,\cat D(d,-)]_{[\cat D,\Rbar]^{\op}}=g(d)$.  In particular,
$[\cat C(-,c),\cat C(-,c')]=\cat C(c,c')$ and $[\cat D(d,-),\cat D(d',-)]=\cat D(d',d)$ so 
the Yoneda map $y\colon\cat C\to[\cat C^{\op},\Rbar]$, $c\mapsto\cat C(-,c)$,
and its copresheaf analogue $\cat D\to[\cat D,\Rbar]^{\op}$, $d\mapsto\cat D(d,-)$,
are fully faithful embeddings.
Every presheaf and copresheaf is a pointwise supremum of representables:
\[
	f(x)=\sup_{c\in\cat C}\paren{f(c)+\cat C(x,c)},
	\qquad
	g(x)=\sup_{d\in\cat D}\paren{g(d)+\cat D(d,x)};
\]
for the weighted-colimit formulation, see~\cite{Kelly1982}.

A \emph{profunctor} $M\colon\cat C\nrightarrow\cat D$ is an $\Rbar$-functor
$M\colon\cat C^{\op}\otimes\cat D\to\Rbar$,
where $\cat C^{\op}\otimes\cat D$ has objects $\mathrm{Ob}(\cat C)\times\mathrm{Ob}(\cat D)$ and
$(\cat C^{\op}\otimes\cat D)((c,d),(c',d'))=\cat C(c',c)+\cat D(d,d')$.
Any set $S$ determines a discrete $\Rbar$-category with $S(s,s')=0$ if $s=s'$ and $-\infty$ otherwise;
for discrete $\cat C$ and $\cat D$, every function $M\colon\cat C\times\cat D\to\Rbar$ defines a profunctor.

\subsection{Isbell duality and the nucleus}\label{subsec:nucleus}

Let $M\colon\cat C\nrightarrow\cat D$ be a profunctor.
For each $d\in\cat D$ the column $M(-,d)$ is a presheaf on $\cat C$,
and for each $c\in\cat C$ the row $M(c,-)$ is a copresheaf on $\cat D$.
The Isbell conjugates extend these assignments to arbitrary (co)presheaves.

\begin{definition}\label{def:Isbell-conjugates}
	The \emph{Isbell conjugates} of $M$ are
	\[
		\begin{aligned}
			M^*\colon [\cat C^{\op},\Rbar] & \to [\cat D,\Rbar]^{\op}, \\
			M_*\colon [\cat D,\Rbar]^{\op} & \to [\cat C^{\op},\Rbar],
		\end{aligned}
	\]
	defined by
	\begin{align}
		(M^*f)(d) & := \inf_{c\in\cat C}\paren{M(c,d)-f(c)},\label{eq:Mupperstar} \\
		(M_*g)(c) & := \inf_{d\in\cat D}\paren{M(c,d)-g(d)}.\label{eq:Mlowerstar}
	\end{align}
\end{definition}

\begin{proposition}\label{prop:Isbell-adjunction}
	The maps $M^*$ and $M_*$ are $\Rbar$-functors and satisfy $M^*\dashv M_*$, i.e.,
	\[
		[\cat D,\Rbar]^{\op}(M^*f,g)=[\cat C^{\op},\Rbar](f,M_*g)
	\]
	for all presheaves $f$ and copresheaves $g$.
	Moreover, $M^*$ and $M_*$ are order-reversing for the pointwise order.
\end{proposition}

\begin{proof}
	Functoriality of $M^*$: if $\delta=[f,f']=\inf_{c}(f'(c)-f(c))$ then $f(c)\le f'(c)-\delta$ for all $c$, and taking
	$\inf_c$ of $M(c,d)-f'(c)+\delta\le M(c,d)-f(c)$ yields
	$[f,f']\le[\cat D,\Rbar]^{\op}(M^*f,M^*f')$; similarly for~$M_*$.

	For the adjunction, the key step is that residuation by $g(d)$ preserves infima:
	\begin{align*}
		[\cat D,\Rbar]^{\op}(M^*f,g)
		 & =\inf_d\paren{\inf_c\paren{M(c,d)-f(c)}-g(d)} \\
		 & =\inf_d\inf_c\paren{M(c,d)-f(c)-g(d)}         \\
		 & =\inf_c\paren{\inf_d\paren{M(c,d)-g(d)}-f(c)}
		=[\cat C^{\op},\Rbar](f,M_*g).
	\end{align*}
	Antitonicity: if $f\le f'$ then $M(c,d)-f'(c)\le M(c,d)-f(c)$ by antitonicity of residuation,
	so $(M^*f')(d)\le(M^*f)(d)$; the argument for $M_*$ is the same.
\end{proof}

Since $M^*$ and $M_*$ are each antitone, the composites
$M_*M^*$ and $M^*M_*$ are monotone.
The adjunction $M^*\dashv M_*$ gives $f\le M_*M^*f$ for every
presheaf~$f$: indeed, the adjunction identity applied with $g=M^*f$
reads $[M^*f,M^*f]=[f,M_*M^*f]$, and the left side is $\ge 0$,
so $f\le M_*M^*f$.
Dually $g\le M^*M_*g$.
Monotonicity and expansion together imply idempotence: applying
$M_*M^*$ to $f\le M_*M^*f$ gives
$M_*M^*f\le (M_*M^*)^2f$
(the expanding direction), and applying the antitone $M^*$ to the
same inequality gives
$M^*M_*M^*f\ge M^*f$, then applying the antitone
$M_*$ reverses again:
$(M_*M^*)^2f=M_*M^*M_*M^*f\le M_*M^*f$.
So $M_*M^*$ and $M^*M_*$ are closure operators:
\begin{equation}\label{eq:closure}
	\begin{aligned}
		f & \le M_*M^*f, & (M_*M^*)^2 & =M_*M^*, \\
		g & \le M^*M_*g, & (M^*M_*)^2 & =M^*M_*.
	\end{aligned}
\end{equation}
In other words, each closure operator is a projection: it expands
its input to the nearest fixed point, and once there, does nothing.

\begin{definition}\label{def:nucleus}
	The \emph{nucleus} of $M$ is the $\Rbar$-category
	\[
		\Nuc(M)=\set{(f,g)\mid g=M^*f,\ f=M_*g},
	\]
	with hom-values $\Nuc(M)((f,g),(f',g'))=[f,f']=[\cat D,\Rbar]^{\op}(g,g')$,
	the equality holding by the adjunction.
\end{definition}

\begin{proposition}\label{prop:nucleus-fixedpoints}
	The projection $(f,g)\mapsto f$ identifies $\Nuc(M)$ with
	$\fix(M_*M^*)$, and $(f,g)\mapsto g$ identifies it
	with $\fix(M^*M_*)$.  Moreover,
	$\fix(M_*M^*)=\mathrm{im}(M_*M^*)
	=\mathrm{im}(M_*)$ and
	$\fix(M^*M_*)=\mathrm{im}(M^*M_*)
	=\mathrm{im}(M^*)$.
\end{proposition}

\begin{proof}
	The first identification is immediate:
	$(f,g)\in\Nuc(M)$ iff $f = M_*g = M_*M^*f$;
	dually for the second.

	For any closure operator, fixed points and image coincide:
	every fixed point is in the image (it is its own closure),
	and every element of the image is fixed by idempotence.
	So $\fix(M_*M^*)=\mathrm{im}(M_*M^*)$.

	It remains to show that $M_*$ already lands among the
	fixed points of $M_*M^*$, i.e.\ $M_*M^*M_* = M_*$.
	The expansion $g\le M^*M_*g$
	and the antitonicity of $M_*$ give $M_*M^*M_*g\le M_*g$.
	The reverse inequality is the expansion
	$M_*g\le M_*M^*M_*g$.
	Together: $M_*M^*M_*=M_*$, so $M_*$ lands in
	$\fix(M_*M^*)$.
	Dually $M^*M_*M^*=M^*$.
\end{proof}

\begin{corollary}\label{cor:fiber-max}
	A presheaf $f$ is a fixed point of $M_*M^*$ if and only if it is
	the largest presheaf with the same $M^*$-image:
	$M^*h=M^*f\Rightarrow h\le f$.
	Dually for copresheaves and~$M_*$.
\end{corollary}

\begin{proof}
	If $f=M_*M^*f$ and $M^*h=M^*f$, then
	$h\le M_*M^*h=M_*M^*f=f$ by~\eqref{eq:closure}.
	Conversely, applying this to $h=M_*M^*f$ gives $M_*M^*f\le f$,
	and~\eqref{eq:closure} gives the reverse inequality.
\end{proof}

\section{Projective metric geometry}\label{sec:geometry}

The nucleus constructed in Section~\ref{sec:nucleus} is an
$\Rbar$-enriched category, and the enrichment determines a
canonical projective metric.
The enriched hom $[f,f']=\inf_c(f'(c)-f(c))$ is a directed
distance: $[f,f']\ge 0$ iff $f\le f'$, and enriched composition
gives the directed triangle inequality.
Symmetrizing introduces a kernel: $-[f,f']-[f',f]=0$ exactly when
$f-f'$ is constant, so the symmetrized distance descends to a
genuine metric on translation classes.
For real-valued presheaves, this metric has a simple form:
\[
	d_{\cat C}([f],[f'])
	= \max_{c}\paren{f(c)-f'(c)} - \min_{c}\paren{f(c)-f'(c)},
\]
the oscillation of the difference---the tropical form of a 
Hilbert projective metric.  
The Isbell transforms, being equivariant for constant translation,
respect this projectivization.
For general $\Rbar$-valued presheaves, some care is needed at
$\pm\infty$: first to handle fixed points of the translation
action, and then because residuation at $\pm\infty$ does not
behave like ordinary subtraction.
We handle this in \S\ref{subsec:hilbert-oscillation} below.

\subsection{Finite index sets and the Isbell transforms}\label{subsec:finite-index}

For the geometric constructions below it is convenient to work with
finite, discrete $\Rbar$-categories.
Thus, for the remainder of this section, $\cat C$ and $\cat D$ are
finite sets (regarded as discrete $\Rbar$-categories), and a
profunctor $M\colon \cat C\nrightarrow \cat D$ is simply a function
\[
	M\colon \cat C\times \cat D\to \Rbar.
\]
In this setting, presheaves on $\cat C$ and copresheaves on $\cat D$
are just functions $\cat C\to\Rbar$ and $\cat D\to\Rbar$, and the
infima in Definition~\ref{def:Isbell-conjugates} are minima.
Accordingly,
\begin{equation}\label{eq:Isbell-min-formulas}
	\begin{aligned}
		(M^*f)(d) & = \min_{c\in\cat C}\paren{M(c,d)-f(c)}, \\
		(M_*g)(c) & = \min_{d\in\cat D}\paren{M(c,d)-g(d)},
	\end{aligned}
\end{equation}
where $z-y$ denotes residuation in $\Rbar$
(cf.\ \eqref{eq:minus}).

\begin{lemma}\label{lem:translation-equivariance}
	For any finite constant $\lambda\in\R$ one has
	\begin{equation}\label{eq:translation-equivariance}
		\begin{aligned}
			M^*(f+\lambda) & = M^*f-\lambda, \\
			M_*(g-\lambda) & = M_*g+\lambda,
		\end{aligned}
	\end{equation}
	where $\lambda$ denotes the constant function on $\cat C$
	or $\cat D$.
\end{lemma}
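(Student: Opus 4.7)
The plan is to proceed by a direct unfolding of the min-formulas in \eqref{eq:Isbell-min-formulas}, leveraging the fact that, crucially, $\lambda\in\R$ is \emph{finite}. The core algebraic fact needed is the identity
\[
a-(b+\lambda)=(a-b)-\lambda
\]
in the residuation of $\Rbar$, valid for all $a,b\in\Rbar$ and every finite $\lambda$. The hard part is not the bookkeeping but ensuring this identity is genuinely available despite the presence of $\pm\infty$, so I would start by isolating it explicitly.

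First, I would establish the residuation identity. By \eqref{eq:minus}, translation $x\mapsto x+\lambda$ on $\Rbar$ is an order isomorphism when $\lambda\in\R$, with inverse $x\mapsto x-\lambda=x+(-\lambda)$, since finite translations are colimit-preserving \emph{and} limit-preserving (their own right adjoint is given by translation by $-\lambda$). Applying the adjunction defining residuation twice,
\[
x\le a-(b+\lambda)\iff x+b+\lambda\le a\iff x+\lambda\le a-b\iff x\le (a-b)-\lambda,
\]
so $a-(b+\lambda)=(a-b)-\lambda$. The middle equivalence is exactly where finiteness of $\lambda$ is used.

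Next, I would compute $M^*(f+\lambda)$ directly. For each $d\in\cat D$,
\[
(M^*(f+\lambda))(d)=\min_{c\in\cat C}\bigl(M(c,d)-(f(c)+\lambda)\bigr)=\min_{c\in\cat C}\bigl((M(c,d)-f(c))-\lambda\bigr).
\]
Since subtraction of a finite $\lambda$ is an order isomorphism of $\Rbar$, it commutes with the finite minimum, yielding
\[
(M^*(f+\lambda))(d)=\Bigl(\min_{c\in\cat C}\bigl(M(c,d)-f(c)\bigr)\Bigr)-\lambda=(M^*f)(d)-\lambda,
\]
which gives the first equation in \eqref{eq:translation-equivariance}.

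The second equation is symmetric: writing out $(M_*(g-\lambda))(c)$, applying $a-(b-\lambda)=(a-b)+\lambda$ (obtained from the first identity by replacing $\lambda$ with $-\lambda$, which is also finite), and pulling the finite additive constant outside the minimum yields $M_*(g-\lambda)=M_*g+\lambda$. I expect no genuine obstacle, only the reminder that the argument breaks down for $\lambda=\pm\infty$, which is why the hypothesis $\lambda\in\R$ is essential — this explains the $\R$-action (rather than $\Rbar$-action) featured in the projectivization discussed in the introduction.
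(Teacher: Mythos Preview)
Your argument is correct and follows the same route as the paper's proof: unfold the min-formula, use the residuation identity $a-(b+\lambda)=(a-b)-\lambda$ for finite $\lambda$, and pull the finite constant through the minimum. The paper's version is terser (it simply writes $M(c,d)-f(c)-\lambda$ and factors $-\lambda$ out of the min), whereas you have made the role of finiteness of $\lambda$ and the order-isomorphism property of finite translations more explicit, but the substance is identical.
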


\begin{proof}
	For the first identity, subtracting $\lambda$ from each term
	inside the minimum gives
	\begin{align*}
		(M^*(f+\lambda))(d)
		 & =\min_{c}\paren{M(c,d)-f(c)-\lambda}         \\
		 & =\paren{\min_{c}\paren{M(c,d)-f(c)}}-\lambda \\
		 & =(M^*f)(d)-\lambda.
	\end{align*}
	The second identity is analogous.
\end{proof}

The equivariance \eqref{eq:translation-equivariance} is the
algebraic shadow of a projective symmetry: if $(f,g)$ satisfies
$g=M^*f$ and $f=M_*g$, then so does $(f+\lambda,g-\lambda)$.

\subsection{Projective (co)presheaves and the
Hilbert--oscillation metric}\label{subsec:hilbert-oscillation}

We now isolate the locus on which translation by constants acts
freely.

\begin{definition}\label{def:finite-somewhere}
	Define the \emph{finite somewhere} presheaves
	$[\cat C^{\op},\Rbar]_{\mathrm{fs}}$ to be the full subcategory
	of presheaves $f\colon \cat C\to\Rbar$ for which $f(c)\in\R$ for
	at least one $c\in\cat C$.
	Define $[\cat D,\Rbar]^{\op}_{\mathrm{fs}}$ similarly for
	copresheaves on $\cat D$.
\end{definition}

On these full subcategories, $(\R,+)$ acts freely by constant
translation $f\mapsto f+\lambda$.

\begin{definition}\label{def:projective-presheaves}
	The \emph{projective presheaf space} of $\cat C$ is the quotient
	\[
		\pcat C := [\cat C^{\op},\Rbar]_{\mathrm{fs}}/\R.
	\]
	Let $[f]\in\pcat C$ denote the translation class of $f$.
	Similarly, the \emph{projective copresheaf space} of $\cat D$ is
	\[
		\pcat D := [\cat D,\Rbar]^{\op}_{\mathrm{fs}}/\R.
	\]
\end{definition}

For $\lambda\in\R$ and presheaves $f,f'$, the enriched hom
satisfies $[f+\lambda,f']=[f,f']-\lambda$ and
$[f,f'+\lambda]=[f,f']+\lambda$.
In particular, the symmetrized quantity $-[f,f']-[f',f]$ is
$\R$-invariant.
This is the tropical analogue of Hilbert's projective metric: it
measures only the oscillation of the difference.

When $f$ and $f'$ are real-valued (i.e.\ $f,f'\colon\cat C\to\R$),
the projective distance is simply
\[
	d_{\cat C}([f],[f'])
	= \max_{c}\paren{f(c)-f'(c)} - \min_{c}\paren{f(c)-f'(c)},
\]
the oscillation of the difference.
For general $\Rbar$-valued presheaves, some care is needed.
The residuation $f(c)-f'(c)$ is not antisymmetric when both values
are $+\infty$ or both are $-\infty$: one has
$\infty-\infty=\infty$ and $-\infty-(-\infty)=\infty$, so
subtracting $-\infty$ is not the same as adding $+\infty$.
We isolate the offending indices.

\begin{definition}\label{def:oscillation-distance}
	Let $f,f'\in[\cat C^{\op},\Rbar]_{\mathrm{fs}}$.  Set
	\[
		S(f,f'):=\set{c\in\cat C\mid
		f(c)=f'(c)=+\infty\text{ or }f(c)=f'(c)=-\infty}.
	\]
	These are the indices at which residuation fails to be
	antisymmetric.
	Define the \emph{projective distance} between
	$[f],[f']\in\pcat C$ by
	\begin{equation}\label{eq:projective-metric}
		d_{\cat C}([f],[f'])
		:=
		\begin{cases}
			\begin{aligned}
				 & \sup\limits_{c\notin S(f,f')}
				\paren{f(c)-f'(c)}                     \\
				 & \quad -\inf\limits_{c\notin S(f,f')}
				\paren{f(c)-f'(c)}
			\end{aligned}
			        & \text{if both extrema lie in $\R$}, \\
			+\infty & \text{otherwise}.
		\end{cases}
	\end{equation}
	Define $d_{\cat D}$ on $\pcat D$ analogously.
\end{definition}

\begin{proposition}\label{prop:projective-metric}
	The function $d_{\cat C}$ is an extended metric on $\pcat C$.
	Moreover, whenever $d_{\cat C}([f],[f'])<\infty$ one has the
	identity
	\begin{equation}\label{eq:d-as-symmetrized-hom}
		d_{\cat C}([f],[f'])=-[f,f']-[f',f],
	\end{equation}
	where $[f,f']$ is the enriched hom in $[\cat C^{\op},\Rbar]$.
\end{proposition}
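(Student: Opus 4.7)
The plan is to first establish the identity \eqref{eq:d-as-symmetrized-hom} on the locus where $d_{\cat C}$ is finite, and then read off all the metric axioms from the composition law for the enriched hom. Well-definedness on $\pcat C$ is immediate from Lemma-type observations: for $\lambda \in \R$ one has $[f+\lambda,f']=[f,f']-\lambda$ and $[f,f'+\lambda]=[f,f']+\lambda$, so the sum $-[f,f']-[f',f]$ is $\R$-invariant, as is the sup-minus-inf formula, and the finite-somewhere hypothesis ensures translation acts freely on the classes involved.

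For the identity, I would unpack $[f,f']=\inf_c\paren{f'(c)-f(c)}$ with residuated subtraction. On indices $c\notin S(f,f')$, residuation coincides with ordinary extended subtraction and satisfies $f'(c)-f(c)=-\paren{f(c)-f'(c)}$. On indices in $S$, residuation of two infinities of like sign yields $+\infty$, which is inert in an infimum provided at least one value outside $S$ is finite; this is guaranteed once $d_{\cat C}([f],[f'])<\infty$, which forces both $\sup$ and $\inf$ over $c\notin S$ into $\R$. Consequently
\[
-[f,f']=\sup_{c\notin S}\paren{f(c)-f'(c)},\qquad -[f',f]=-\inf_{c\notin S}\paren{f(c)-f'(c)},
\]
and their sum is exactly $d_{\cat C}([f],[f'])$.

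With \eqref{eq:d-as-symmetrized-hom} in hand, the four metric axioms follow cleanly. Symmetry is visible from the sum form; nonnegativity is $\sup\ge\inf$; and the triangle inequality is obtained by negating the composition law $[f,f']+[f',f'']\le[f,f'']$ to get $-[f,f'']\le -[f,f']-[f',f'']$ and likewise $-[f'',f]\le -[f'',f']-[f',f]$, then adding (with the $\Rbar$-convention that $+\infty$ absorbs finite summands, so the inequality persists in the infinite case). For separation: if $d_{\cat C}([f],[f'])=0$, the oscillation of $f-f'$ vanishes on $\cat C\setminus S(f,f')$, so there exists $\lambda\in\R$ with $f(c)=f'(c)+\lambda$ for $c\notin S$; since indices in $S$ are by definition those where $f$ and $f'$ share a common $\pm\infty$, the equality $f=f'+\lambda$ holds everywhere as $\Rbar$-valued functions, hence $[f]=[f']$.

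The main bookkeeping obstacle is the residuation behavior at indices in $S(f,f')$: the handy identity $y-x=-(x-y)$ fails there in $\Rbar$, and one must verify that such indices contribute only inert $+\infty$ entries to the infima defining $[f,f']$ and $[f',f]$ whenever $d_{\cat C}$ is finite. This is precisely where the finite-somewhere restriction earns its keep, and it is also why the definition of $d_{\cat C}$ excises $S$ before taking extrema. Once this point is nailed down, the remainder of the proposition is formal.
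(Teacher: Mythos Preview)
Your proposal is correct and follows essentially the same route as the paper: verify well-definedness via translation invariance, establish the identity \eqref{eq:d-as-symmetrized-hom} by showing indices in $S(f,f')$ contribute only inert $+\infty$ terms to the infima defining the enriched homs, and then read off symmetry, nonnegativity, separation, and the triangle inequality (the last from the enriched composition law). The paper orders the steps slightly differently (separation before the identity), but the ideas are identical.

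One small point worth tightening: your parenthetical about the triangle inequality in the infinite case is a bit loose. The subtle issue is that when $d_{\cat C}([f],[f'])$ and $d_{\cat C}([f'],[f''])$ are both finite, you need $d_{\cat C}([f],[f''])$ to be finite as well before invoking \eqref{eq:d-as-symmetrized-hom} on the left side. This does hold (finiteness of $d_{\cat C}([f],[f'])$ forces $f$ and $f'$ to share the same $\pm\infty$ pattern, and transitively so does $f''$), but neither you nor the paper spells it out; the paper simply asserts ``otherwise all three distances are finite.'' It is a routine check, not a gap.
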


\begin{proof}
	Well-definedness on $\pcat C$ is immediate: translating $f$ by a
	finite constant $\lambda$ shifts both $\sup(f-f')$ and
	$\inf(f-f')$ by $\lambda$, so their difference is unchanged, and
	$S(f,f')$ is unaffected since $\pm\infty+\lambda=\pm\infty$.
	Symmetry and nonnegativity are clear.
	If $d_{\cat C}([f],[f'])=0$ then $f(c)-f'(c)$ is a single real
	constant on $\cat C\setminus S(f,f')$ and
	$f(c)=f'(c)\in\set{\pm\infty}$ on $S(f,f')$, so $[f]=[f']$.

	Now assume $d_{\cat C}([f],[f'])<\infty$.
	Since $+\infty$ is absorbing for residuation, every
	$c\in S(f,f')$ contributes $f'(c)-f(c)=+\infty$ to the enriched
	hom $[f,f']$ and $f(c)-f'(c)=+\infty$ to $[f',f]$.
	These values cannot achieve either infimum (which is finite by
	hypothesis), so they drop out and
	\[
		-[f,f']-[f',f]
		= \sup_{c}\paren{f(c)-f'(c)}
		  - \inf_{c}\paren{f(c)-f'(c)}
		= d_{\cat C}([f],[f']),
	\]
	which is~\eqref{eq:d-as-symmetrized-hom}.

	For the triangle inequality: if either summand on the right is
	$+\infty$ there is nothing to prove.
	Otherwise all three distances are finite and we use enriched
	composition $[f,f']+[f',f'']\le[f,f'']$ and
	$[f'',f']+[f',f]\le[f'',f]$.
	Negating and adding gives
	$d_{\cat C}([f],[f''])\le
	d_{\cat C}([f],[f'])+d_{\cat C}([f'],[f''])$.
\end{proof}

\subsection{Projective nuclei and isometries}

We now impose the mild hypothesis needed to pass $M^*$ and $M_*$ to projective spaces.

\begin{definition}\label{def:nondegenerate}
	We call the profunctor $M$ \emph{nondegenerate} if the Isbell transforms preserve the finite-somewhere condition:
	\[
		\begin{aligned}
			M^*\paren{[\cat C^{\op},\Rbar]_{\mathrm{fs}}} & \subseteq [\cat D,\Rbar]^{\op}_{\mathrm{fs}}, \\
			M_*\paren{[\cat D,\Rbar]^{\op}_{\mathrm{fs}}} & \subseteq [\cat C^{\op},\Rbar]_{\mathrm{fs}}.
		\end{aligned}
	\]
\end{definition}

Under this hypothesis, Lemma~\ref{lem:translation-equivariance} implies that $M^*$ and $M_*$ descend to
well-defined maps
\[
	\begin{aligned}
		M^* & \colon \pcat C\to \pcat D, \\
		M_* & \colon \pcat D\to \pcat C.
	\end{aligned}
\]

\begin{definition}\label{def:projective-nucleus}
	Let $\Nuc(M)$ be the nucleus of $M$ (Definition~\ref{def:nucleus}) and set
	\[
		\Nuc(M)_{\mathrm{fs}}
		:=\Nuc(M)\cap\paren{[\cat C^{\op},\Rbar]_{\mathrm{fs}}\times[\cat D,\Rbar]^{\op}_{\mathrm{fs}}}.
	\]
	The group $\R$ acts on $\Nuc(M)_{\mathrm{fs}}$ by
	\[
		\lambda\cdot(f,g)=(f+\lambda,g-\lambda).
	\]
	The \emph{projective nucleus} is the quotient
	\[
		\pnuc(M):=\Nuc(M)_{\mathrm{fs}}/\R.
	\]
	We metrize $\pnuc(M)$ by
	\[
		d_{\pnuc}\paren{[(f,g)],[(f',g')]}:=\max\set{d_{\cat C}([f],[f']),d_{\cat D}([g],[g'])}.
	\]
\end{definition}

Write $\fix_{\proj}(M_*M^*)\subseteq\pcat C$ and $\fix_{\proj}(M^*M_*)\subseteq\pcat D$
for the images in projective space of the fixed-point sets of the closure operators $M_*M^*$ and $M^*M_*$.

\begin{theorem}[The Isometry Theorem]\label{thm:projective-isometries}
	Let $M\colon \cat C\nrightarrow \cat D$ be a nondegenerate profunctor.
	Then the maps $M^*\colon \pcat C\to \pcat D$ and $M_*\colon \pcat D\to \pcat C$ are $1$-Lipschitz for the metrics
	$d_{\cat C}$ and $d_{\cat D}$.
	Moreover, they restrict to mutually inverse isometries
	\[
		\begin{aligned}
			M^* & \colon \fix_{\proj}(M_*M^*) \xrightarrow{\cong} \fix_{\proj}(M^*M_*), \\
			M_* & \colon \fix_{\proj}(M^*M_*) \xrightarrow{\cong} \fix_{\proj}(M_*M^*).
		\end{aligned}
	\]
	and hence identify $\pnuc(M)$ isometrically with either projective fixed-point set.
\end{theorem}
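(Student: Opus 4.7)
The plan is to proceed in three steps: establish that $M^*$ and $M_*$ are $1$-Lipschitz on the full projective (co)presheaf spaces, use a sandwich argument with the closure operators to upgrade this to isometry on the fixed-point sets, and finally identify $\pnuc(M)$ isometrically with either fixed-point set.

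First I would derive $1$-Lipschitzness from the enriched-functorial inequality of Proposition~\ref{prop:Isbell-adjunction}: since $M^*$ is an $\Rbar$-functor, $[f,f']\le [M^*f,M^*f']_{[\cat D,\Rbar]^{\op}}$ for all $f,f'$, and symmetrically after swapping $f$ and $f'$. Adding these, negating, and applying the symmetrization identity \eqref{eq:d-as-symmetrized-hom} together with its copresheaf analog on the $[\cat D,\Rbar]^{\op}$ side yields $d_{\cat D}([M^*f],[M^*f'])\le d_{\cat C}([f],[f'])$. Lemma~\ref{lem:translation-equivariance} combined with the nondegeneracy hypothesis is exactly what is needed to descend $M^*$ to a well-defined map $\pcat C\to\pcat D$, so the inequality lives on the projective quotients. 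The argument for $M_*$ is symmetric.

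The isometry on fixed points then follows from a sandwich: for $f,f'\in\fix(\mathrm{cl}_{\cat C})$ one has $M_*M^*f=f$ and $M_*M^*f'=f'$, so applying $1$-Lipschitzness for $M^*$ and then for $M_*$ gives
\[
d_{\cat C}([f],[f']) = d_{\cat C}([M_*M^*f],[M_*M^*f'])\le d_{\cat D}([M^*f],[M^*f'])\le d_{\cat C}([f],[f']),
\]
forcing equality throughout. That the image of $\fix_{\proj}(M_*M^*)$ under $M^*$ lands in $\fix_{\proj}(M^*M_*)$ is the idempotence identity $M^*M_*M^*=M^*$ recorded in \eqref{eq:closure}, and the symmetric argument produces the inverse isometry.

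To conclude, I would invoke Proposition~\ref{prop:nucleus-fixedpoints}: the projections $(f,g)\mapsto f$ and $(f,g)\mapsto g$ give bijections between $\Nuc(M)$ and the two closure fixed-point sets, and these are equivariant under the shifted action $\lambda\cdot(f,g)=(f+\lambda,g-\lambda)$, so they pass to the projective quotients. Under this identification, the $\max$ defining $d_{\pnuc}$ collapses onto either component, because the isometry just established gives $d_{\cat C}([f],[f'])=d_{\cat D}([M^*f],[M^*f'])$ whenever $(f,g),(f',g')$ are nucleus points. The main obstacle I anticipate is purely bookkeeping with extended-real values: one must ensure that the symmetrization identity \eqref{eq:d-as-symmetrized-hom} is invoked only when the relevant enriched homs are finite (the $+\infty$ cases of the $1$-Lipschitz bound are trivial), and that nondegeneracy really does propagate the finite-somewhere condition through $M^*$ and $M_*$ so that the projective maps are defined everywhere on the quotients.
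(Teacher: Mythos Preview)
Your proposal is correct and follows essentially the same approach as the paper: derive $1$-Lipschitzness from the enriched functoriality inequality, add the two directions and negate to invoke \eqref{eq:d-as-symmetrized-hom}, then use the sandwich $M_*M^*f=f$ on fixed points to force equality, with the $+\infty$ case handled separately. Your anticipated bookkeeping about finite homs and nondegeneracy matches the paper's treatment, and your extra remark that the $\max$ in $d_{\pnuc}$ collapses is a small gloss the paper leaves implicit.
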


\begin{proof}
	Functoriality of $M^*$ in the enriched sense gives, for presheaves $f,f'$,
	\[
		[f,f']\le [\cat D,\Rbar]^{\op}(M^*f,M^*f')=[\cat D,\Rbar](M^*f',M^*f),
	\]
	and the same inequality with $f$ and $f'$ exchanged.
	Negating and adding yields
	\[
		-[M^*f,M^*f']-[M^*f',M^*f]\le -[f,f']-[f',f].
	\]
	If $d_{\cat C}([f],[f'])<\infty$, Proposition~\ref{prop:projective-metric} identifies both sides with the corresponding projective
	metrics, giving
	\[
		d_{\cat D}\paren{M^*[f],M^*[f']}\le d_{\cat C}\paren{[f],[f']}.
	\]
	If $d_{\cat C}([f],[f'])=+\infty$, the inequality is tautological.
	Thus $M^*$ is $1$-Lipschitz, and similarly $M_*$.

	On the projective fixed-point sets, $M^*$ and $M_*$ are inverse bijections
	(Proposition~\ref{prop:nucleus-fixedpoints}).
	Since each is $1$-Lipschitz, the two inequalities
	\[
		\begin{aligned}
			d_{\cat D}\paren{M^*[f],M^*[f']} & \le d_{\cat C}\paren{[f],[f']}, \\
			d_{\cat C}\paren{M_*[g],M_*[g']} & \le d_{\cat D}\paren{[g],[g']}.
		\end{aligned}
	\]
	apply to inverse pairs and force equality.
	Hence both restrictions are isometries.

	Finally, the identification with $\pnuc(M)$ is obtained by projecting $[(f,g)]\mapsto [f]$ or $[(f,g)]\mapsto [g]$.
\end{proof}

Consequently we obtain a diagram of metric spaces in which every arrow is an isometry:
\[
	\begin{tikzcd}[column sep=huge,row sep=huge]
		& \pnuc(M)
		\arrow[dl, bend left=15, "i_1"]
		\arrow[dl, bend right=15, "\pi_1"']
		\arrow[dr, bend left=15, "\pi_2"]
		\arrow[dr, bend right=15, "i_2"'] \\
		\fix_{\proj}(M_*M^*)
		\arrow[rr, shift left=0.5ex, "M^*"]
		&&
		\fix_{\proj}(M^*M_*)
		\arrow[ll, shift left=1.0ex, "M_*"]
	\end{tikzcd}
\]
where $\pi_1([(f,g)])=[f]$, $\pi_2([(f,g)])=[g]$, $i_1([f])=[(f,M^*f)]$, and $i_2([g])=[(M_*g,g)]$.

\subsection{External gauge transformations}
Beyond translation by constants there is a larger symmetry: the
additive group $\R^{\cat C}\times\R^{\cat D}$ acts on matrices by
adding constants to rows and columns, the diagonal equivalence of
tropical linear algebra.
This action changes $M$ but preserves $\pnuc(M)$ up to canonical
isometry, so the projective metric and cell decomposition depend
only on the diagonal equivalence class of $M$.

\begin{definition}\label{def:gauge}
	For $u\in\R^{\cat C}$ define $L_u\colon \pcat C\to\pcat C$ by $[f]\mapsto [f-u]$.
	For $v\in\R^{\cat D}$ define $R_v\colon \pcat D\to\pcat D$ by $[g]\mapsto [g-v]$.
	Given $(u,v)$, define the \emph{gauge transform} of $M$ by
	\[
		M^{(u,v)}(c,d):=M(c,d)-u(c)-v(d).
	\]
\end{definition}

\begin{lemma}\label{lem:gauge-conjugacy}
	On projective spaces, the Isbell transforms of $M^{(u,v)}$ are conjugate to those of $M$:
	\[
		\begin{aligned}
			\left(M^{(u,v)}\right)^* & =R_v\circ M^*\circ L_u^{-1}, \\
			\left(M^{(u,v)}\right)_* & =L_u\circ M_*\circ R_v^{-1}.
		\end{aligned}
	\]
\end{lemma}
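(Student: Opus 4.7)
The plan is to verify both identities by unfolding the explicit formulas \eqref{eq:Isbell-min-formulas} for $\bigl(M^{(u,v)}\bigr)^*$ and $\bigl(M^{(u,v)}\bigr)_*$ and rearranging the finite potentials $u(c)$ and $v(d)$ out of the minima. Since $u,v$ take values in $\R$ (not $\Rbar$), these manipulations do not interact with the $\pm\infty$ subtleties of residuation, so the computations reduce to ordinary additive bookkeeping.

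More precisely, for the first identity I will fix a representative $f\in[\cat C^{\op},\Rbar]_{\mathrm{fs}}$ and compute
\[
\bigl(M^{(u,v)}\bigr)^*f(d)=\min_{c\in\cat C}\bigl(M(c,d)-u(c)-v(d)-f(c)\bigr)
=\min_{c\in\cat C}\bigl(M(c,d)-(f+u)(c)\bigr)-v(d),
\]
where pulling out $v(d)$ is legitimate because it does not depend on $c$ and is finite. The right-hand side is exactly $\bigl(M^*(f+u)\bigr)(d)-v(d)$, which represents $R_v\bigl(M^*[f+u]\bigr)=R_v\bigl(M^*(L_u^{-1}[f])\bigr)$ in $\pcat D$. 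The dual computation for $\bigl(M^{(u,v)}\bigr)_*$ is symmetric, swapping the roles of $(c,u,L_u)$ and $(d,v,R_v)$.

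The main auxiliary step is to check that these formulas descend to projective space, i.e.\ that the maps $L_u,R_v$ are well-defined on $\pcat C,\pcat D$: if $f'=f+\lambda$ with $\lambda\in\R$, then $f'-u=(f-u)+\lambda$, so $[f-u]=[f'-u]$, and similarly for $R_v$. Combined with Lemma~\ref{lem:translation-equivariance} applied to the constant $\lambda$, the pointwise identities above pass to equalities of maps $\pcat C\to\pcat D$ and $\pcat D\to\pcat C$. Nondegeneracy is preserved because $u,v$ are finite-valued, so $M^{(u,v)}$ has the same finite/infinite pattern as $M$ and its Isbell transforms land in the finite-somewhere loci.

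I do not expect a substantive obstacle here; the statement is essentially a bookkeeping identity reflecting that additive potentials on $\cat C$ and $\cat D$ act on Isbell conjugation by pre- and post-composition with the corresponding translations. The only point requiring care is the order of conventions: $L_u[f]=[f-u]$ so $L_u^{-1}[f]=[f+u]$, and it is the additive shift $f\mapsto f+u$ inside $M^*$ that appears in the calculation, matching the $L_u^{-1}$ on the right-hand side of the claimed conjugacy.
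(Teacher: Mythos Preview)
Your proposal is correct and follows essentially the same approach as the paper: a direct unfolding of \eqref{eq:Isbell-min-formulas} together with pulling the finite constant $v(d)$ (resp.\ $u(c)$) outside the minimum. The only cosmetic difference is that the paper evaluates $\bigl(M^{(u,v)}\bigr)^*$ at $f-u$ and obtains $M^*f-v$, whereas you evaluate at $f$ and obtain $M^*(f+u)-v$; these are the same identity, and your added remarks on finiteness of $u,v$ and well-definedness on projective classes are valid elaborations that the paper leaves implicit.
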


\begin{proof}
	For a representative $f$ and any $d\in\cat D$,
	\begin{align*}
		\left(M^{(u,v)}\right)^*(f-u)(d)
		 & =\min_{c}\paren{M(c,d)-u(c)-v(d)-(f(c)-u(c))} \\
		 & =\min_{c}\paren{M(c,d)-f(c)}-v(d)             \\
		 & =M^*f(d)-v(d).
	\end{align*}
	This is precisely $(R_v\circ M^*)(f)(d)$.
	The statement for $\left(M^{(u,v)}\right)_*$ is analogous.
\end{proof}

\begin{proposition}\label{prop:gauge-nucleus}
	For any $(u,v)\in\R^{\cat C}\times\R^{\cat D}$, the map
	\[
		\begin{aligned}
			\Phi_{u,v}\colon \pnuc(M) & \to \pnuc\paren{M^{(u,v)}}, \\
			[(f,g)]                   & \mapsto [(f-u,g-v)].
		\end{aligned}
	\]
	is a well-defined isometry with inverse $[(f',g')]\mapsto [(f'+u,g'+v)]$.
	Consequently the projective fixed-point sets and projective nuclei of $M$ and $M^{(u,v)}$ are canonically isometric.
\end{proposition}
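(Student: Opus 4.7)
The plan is to verify the three constitutive properties of $\Phi_{u,v}$---well-definedness as a map of projective nuclei, the isometry property, and invertibility---by leveraging Lemma~\ref{lem:gauge-conjugacy} and the observation that the projective (co)presheaf metric depends only on differences of representatives.

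I first check that $\Phi_{u,v}$ sends $\Nuc(M)_{\mathrm{fs}}$ into $\Nuc(M^{(u,v)})_{\mathrm{fs}}$ at the level of representatives. The computation displayed in the proof of Lemma~\ref{lem:gauge-conjugacy} gives the pointwise identity $(M^{(u,v)})^{*}(f-u) = M^{*}f - v$, and an entirely analogous computation on the other side yields $(M^{(u,v)})_{*}(g-v) = M_{*}g - u$. Hence if $g = M^{*}f$ and $f = M_{*}g$, then $g-v = (M^{(u,v)})^{*}(f-u)$ and $f-u = (M^{(u,v)})_{*}(g-v)$, so $(f-u, g-v)$ is a nucleus point for $M^{(u,v)}$. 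Because $u\in\R^{\cat C}$ is real-valued, subtracting $u(c)$ preserves whether $f(c)$ is finite, so the finite-somewhere condition passes through; likewise for $g-v$. The resulting map descends to projective classes because the $\R$-action $\lambda\cdot(f,g) = (f+\lambda, g-\lambda)$ commutes with the shift by $(u,v)$: $(f+\lambda)-u = (f-u)+\lambda$ and $(g-\lambda)-v = (g-v)-\lambda$.

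For the isometry, I observe that $d_{\pnuc}$ is the maximum of $d_{\cat C}$ and $d_{\cat D}$, each of which is defined in \eqref{eq:projective-metric} purely in terms of the pointwise differences $f(c)-f'(c)$ on the complement of the ``both-infinite'' set $S(f,f')$. Since $u$ is real-valued, subtracting $u$ leaves unchanged which entries are $\pm\infty$, so $S(f-u, f'-u) = S(f, f')$; and on this common complement one has $(f-u)(c)-(f'-u)(c) = f(c)-f'(c)$ identically. Therefore $d_{\cat C}([f-u],[f'-u]) = d_{\cat C}([f],[f'])$, and similarly for $d_{\cat D}$, so $\Phi_{u,v}$ preserves $d_{\pnuc}$ on the nose.

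Invertibility follows by symmetry: applying the same construction with $M$ replaced by $M^{(u,v)}$ and $(u,v)$ replaced by $(-u,-v)$, and using the obvious identity $(M^{(u,v)})^{(-u,-v)} = M$, gives a map $[(f',g')]\mapsto[(f'+u, g'+v)]$ which by the same arguments is a well-defined isometry in the opposite direction; the two compositions are manifestly the identity on representatives. There is no real obstacle here: once Lemma~\ref{lem:gauge-conjugacy} is in hand, the proposition is essentially bookkeeping. The one point that genuinely deserves attention is the use of real-valuedness of $u$ and $v$: an $\Rbar$-valued ``potential'' would alter the set $S$ of simultaneously infinite indices and could even destroy the finite-somewhere condition, so the restriction to $\R^{\cat C}\times\R^{\cat D}$ is not cosmetic.
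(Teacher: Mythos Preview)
Your proof is correct and follows essentially the same approach as the paper's: both use Lemma~\ref{lem:gauge-conjugacy} to identify the Isbell equations for $M$ and $M^{(u,v)}$ under the shift $(f,g)\mapsto(f-u,g-v)$, and both establish the isometry by observing that subtracting a common real-valued potential leaves the pointwise differences (and hence the oscillation metric) unchanged. Your treatment is more explicit about the finite-somewhere condition and the descent to projective classes, but the underlying argument is the same.
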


\begin{proof}
The maps $L_u$ and $R_v$ are isometries: subtracting the same
potential from both arguments leaves the difference, hence its
oscillation, unchanged.	Lemma~\ref{lem:gauge-conjugacy} identifies the Isbell equations for $M$ with those for $M^{(u,v)}$ under these isometries.
	Thus $(f,g)\in\Nuc(M)$ if and only if $(f-u,g-v)\in\Nuc\paren{M^{(u,v)}}$, and the induced map on projective quotients is an isometry.
\end{proof}

\subsection{Witness cells and the gap matrix}\label{sec:witness_cells}

The projective metric of $\pnuc(M)$ is intrinsic and gauge-invariant
(Proposition~\ref{prop:gauge-nucleus}).
When the indexing sets are finite, the Isbell inequalities also
endow $\pnuc(M)$ with a polyhedral stratification.
The bridge between metric and polyhedral structure is the
\emph{gap matrix} $\delta^{(f,g)}$: its zero entries record the
witness relation and hence the combinatorial cell, while its
positive entries measure slack.

We continue with $\cat C$ and $\cat D$ finite as in
\S\ref{subsec:finite-index}, though the definitions below extend
verbatim to general small $\Rbar$-categories by replacing minima
with infima.
Fix a nondegenerate profunctor $M$.
For a presheaf $f$, write $g:=M^*f$ as in
\eqref{eq:Isbell-min-formulas}, and dually $f:=M_*g$.

\begin{definition}\label{def:witness}
	Let $f\colon \cat C\to\Rbar$ and let $g:=M^*f$.
	An element $c\in\cat C$ is a \emph{witness for $f$ at $d\in\cat D$}
	if $c$ realizes the minimum defining $M^*f(d)$:
	\[
		g(d)=M(c,d)-f(c).
	\]
	Dually, if $g\colon \cat D\to\Rbar$ and $f:=M_*g$, then
	$d\in\cat D$ is a \emph{witness for $g$ at $c\in\cat C$} if $d$
	realizes the minimum defining $M_*g(c)$.
\end{definition}

Because $M^*(f+\lambda)=M^*f-\lambda$ and
$M_*(g-\lambda)=M_*g+\lambda$ for every $\lambda\in\R$, the
witness relation depends only on the projective classes
$[f]\in\pcat C$ and $[g]\in\pcat D$.
By definition of $g=M^*f$, the inequalities
\[
	f(c)+g(d)\le M(c,d),\quad c\in\cat C,\ d\in\cat D
\]
always hold.
The gap matrix measures how far they are from equality.

\begin{definition}\label{def:gap}
	Let $[f]\in\pcat C$ and choose a representative
	$f\colon \cat C\to\Rbar$.
	Set $g:=M^*f$.
	The \emph{gap matrix} of $[f]$ is the function
	\begin{equation}\label{eq:gap}
		\delta^{f}(c,d):=M(c,d)-\paren{f(c)+g(d)}.
	\end{equation}
	Dually, for $[g]\in\pcat D$ with representative $g$ and
	$f:=M_*g$, we set
	$\delta^{g}(c,d):=M(c,d)-\paren{f(c)+g(d)}$.
	If $(f,g)\in\Nuc(M)$, then $\delta^{f}=\delta^{g}$, and we write
	$\delta^{(f,g)}$.
\end{definition}

\begin{lemma}\label{lem:gap-invariances}
	\begin{enumerate}
		\item[(a)] For every $\lambda\in\R$, one has
		      $\delta^{f+\lambda}=\delta^{f}$.
		\item[(b)] For $u\in\R^{\cat C}$ and $v\in\R^{\cat D}$, let
		      $M^{(u,v)}(c,d):=M(c,d)-u(c)-v(d)$ be the gauge
		      transform.
		      If $g=M^*f$, then
		      $\left(M^{(u,v)}\right)^*(f-u)=g-v$, and the
		      corresponding gap matrices agree:
		      \[
			      \delta^{f}=\delta^{f-u},
		      \]
		      where $\delta^{f}$ is computed with $M$ and
		      $\delta^{f-u}$ with $M^{(u,v)}$.
	\end{enumerate}
\end{lemma}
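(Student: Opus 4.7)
The plan is to deduce both invariances from the translation equivariance of $M^*$ established in Lemma~\ref{lem:translation-equivariance} together with the gauge conjugacy of Lemma~\ref{lem:gauge-conjugacy}; in each case the shift applied to the presheaf is exactly compensated by the induced shift on the copresheaf, and the two cancellations occur symmetrically inside the gap formula~\eqref{eq:gap}.

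For (a), I first apply Lemma~\ref{lem:translation-equivariance} to write $g':=M^*(f+\lambda)=g-\lambda$. Substituting $f+\lambda$ and $g-\lambda$ into~\eqref{eq:gap} then cancels the two constants, leaving $\delta^{f}(c,d)$. The only point to verify is that residuation by the finite real $\lambda$ agrees with ordinary subtraction on the entries appearing in the minima defining $M^*f$; this is guaranteed by working with $\lambda\in\R$ and is already implicit in the proof of Lemma~\ref{lem:translation-equivariance}.

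For (b), two things have to be checked. The identity $\left(M^{(u,v)}\right)^*(f-u)=g-v$ is a pointwise instance of Lemma~\ref{lem:gauge-conjugacy}: inside the minimum defining $\left(M^{(u,v)}\right)^*(f-u)(d)$, the $-u(c)$ coming from the definition of $M^{(u,v)}$ cancels the $+u(c)$ introduced by subtracting $u$ from the argument, while $-v(d)$ is independent of $c$ and pulls out of the minimum as an additive constant. Given this, the gap matrix of the gauge-transformed data is
\[
\left(M(c,d)-u(c)-v(d)\right)-\left((f(c)-u(c))+(g(d)-v(d))\right),
\]
in which the $u(c)$'s and $v(d)$'s cancel in pairs, leaving $\delta^{f}(c,d)$.

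The main obstacle, such as it is, is purely organizational: one must track the convention that the $\R$-action on copresheaves is by $g\mapsto g-\lambda$ rather than $g\mapsto g+\lambda$, so that the nucleus symmetry $(f,g)\mapsto (f+\lambda,g-\lambda)$ really does preserve the Isbell equations, and similarly that the external gauge acts by $(f,g)\mapsto (f-u,g-v)$ in tandem with $M\mapsto M^{(u,v)}$. Once these sign conventions are fixed in accord with Lemmas~\ref{lem:translation-equivariance} and~\ref{lem:gauge-conjugacy}, the two cancellations described above are all that is needed.
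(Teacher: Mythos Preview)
Your proof is correct and follows essentially the same approach as the paper: for (a) you use the translation equivariance $M^*(f+\lambda)=M^*f-\lambda$ and cancel $\lambda$ in the gap formula, and for (b) you cancel the $u(c)$'s and $v(d)$'s pairwise in the gauged gap expression. The paper's proof is slightly terser---it omits the explicit verification of $\left(M^{(u,v)}\right)^*(f-u)=g-v$ in (b) and just displays the final cancellation---but the content is the same.
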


\begin{proof}
	(a) follows from $M^*(f+\lambda)=M^*f-\lambda$ and
	$(f(c)+\lambda)+(g(d)-\lambda)=f(c)+g(d)$.
	For~(b), compute
	\[
		M^{(u,v)}(c,d)-\paren{(f-u)(c)+(g-v)(d)}
		=M(c,d)-\paren{f(c)+g(d)}.
		\qedhere
	\]
\end{proof}

On the finite locus, the zeros of $\delta$ are exactly the witness
pairs.
The next lemma isolates the only subtlety: in $\Rbar$, a zero gap
forces finiteness.

\begin{lemma}\label{lem:gap-zero-finite}
	If $\delta^{f}(c,d)=0$, then $f(c)$, $g(d)$, and $M(c,d)$ are
	all finite real numbers.
\end{lemma}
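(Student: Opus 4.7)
The plan is to apply the residuation identity $z - y = \sup\{x \in \Rbar : x + y \le z\}$ from equation~\eqref{eq:minus} to the hypothesis $\delta^f(c,d) = M(c,d) - (f(c) + g(d)) = 0$, working outward in three steps: first rule out $M(c,d) = \pm\infty$, then rule out $f(c) + g(d) = \pm\infty$, then conclude $f(c), g(d) \in \R$ individually.

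First I would show $M(c,d) \in \R$. If $M(c,d) = +\infty$, then for any $y \in \Rbar$ every $x$ satisfies $x + y \le +\infty$, so $+\infty - y = +\infty \ne 0$. If $M(c,d) = -\infty$, a short case split on $y = f(c) + g(d)$ gives $-\infty - y = +\infty$ when $y = -\infty$ (every $x$ qualifies) and $-\infty - y = -\infty$ otherwise (only $x = -\infty$ qualifies). In neither case is the residue $0$, so $M(c,d)$ is finite.

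Next, with $M(c,d)$ finite, set $a := f(c) + g(d)$, so the hypothesis reads $0 = \sup\{x \in \Rbar : x + a \le M(c,d)\}$. If $a = -\infty$ then $x + a = -\infty$ for every $x$ and the sup is $+\infty$; if $a = +\infty$ then $x + a = +\infty$ whenever $x \ne -\infty$, so only $x = -\infty$ qualifies and the sup is $-\infty$. Both contradict $\delta = 0$, forcing $a \in \R$. Finally, the absorbing convention of Section~\ref{subsec:Rbar} ($-\infty + y = -\infty$ for all $y$, and $+\infty + y = +\infty$ for $y \in \R \cup \{+\infty\}$, while $+\infty + (-\infty) = -\infty$) means that any $\pm\infty$ summand produces an infinite $a$, contradicting the previous step. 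Hence $f(c), g(d) \in \R$.

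The argument is essentially bookkeeping; the only delicacy is that residuation at the endpoints of $\Rbar$ does not behave like ordinary subtraction and that $-\infty$ rather than $+\infty$ is absorbing for $+$, so each endpoint case must be checked by hand rather than inferred by symmetry. I do not expect any structural obstacle beyond tracking those conventions.
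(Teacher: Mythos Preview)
Your proof is correct and follows essentially the same approach as the paper: both arguments reduce to case analysis on the residuation $z-y=\sup\{x:x+y\le z\}$ at the endpoints $\pm\infty$, together with the fact that $-\infty$ is absorbing for $+$. The only difference is the order of the steps---you rule out $M(c,d)=\pm\infty$ first and then handle the sum, whereas the paper first shows $f(c)+g(d)\in\R$ and deduces $M(c,d)\in\R$ last (which slightly shortens the case analysis, since subtracting a finite number from $\pm\infty$ is immediately $\pm\infty$).
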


\begin{proof}
	By definition,
	$\delta^{f}(c,d)=M(c,d)-\paren{f(c)+g(d)}$ is a residuation.
	If $f(c)+g(d)=+\infty$ then
	$\delta^{f}(c,d)=M(c,d)-\infty\in\set{-\infty,\infty}$,
	never~$0$.
	If $f(c)+g(d)=-\infty$ then
	$\delta^{f}(c,d)=M(c,d)-(-\infty)=\infty$, again not~$0$.
	Thus $f(c)+g(d)\in\R$.
	Since $-\infty$ is absorbing for addition, a finite sum forces
	$f(c),g(d)\in\R$.
	Finally, $M(c,d)\in\R$ as well: if $M(c,d)=\pm\infty$ then
	$M(c,d)-(f(c)+g(d))=\pm\infty$.
\end{proof}

\begin{proposition}\label{prop:gap}
	Let $[f]\in\pcat C$ with representative $f$, let $g:=M^*f$, and
	let $\delta=\delta^{f}$.
	Then:
	\begin{enumerate}
		\item[(a)] $\delta(c,d)\ge 0$ for all
		      $(c,d)\in\cat C\times\cat D$.
		\item[(b)] If $\delta(c,d)=0$, then $c$ is a witness for $f$
		      at~$d$.
		\item[(c)] If $f$ and $g$ are finite-valued, then
		      $\delta(c,d)=0$ if and only if $c$ is a witness for $f$
		      at~$d$.
		      In particular, every column contains at least one zero.
		\item[(d)] If moreover $(f,g)\in\Nuc(M)$ and $f,g$ are
		      finite-valued, then $\delta(c,d)=0$ if and only if $d$
		      is a witness for $g$ at~$c$.
		      In particular, every row contains at least one zero.
	\end{enumerate}
\end{proposition}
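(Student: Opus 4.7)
\medskip
\noindent
\textbf{Proof plan.}
The plan is to dispatch parts (a)--(d) directly from the definitions together with Lemma~\ref{lem:gap-zero-finite}, and then to handle (e) by combining the built-in closure inequality $f\le M_*M^*f$ with a witness exhibited by a zero in each row of $\delta^f$. Throughout I use that for finite reals the residuation in $\Rbar$ coincides with ordinary subtraction, so the relation $g(d)=M(c,d)-f(c)$ may be rearranged freely whenever we know all three quantities are finite.

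For (a), by definition $g(d)=\min_c\paren{M(c,d)-f(c)}$, so $g(d)\le M(c,d)-f(c)$ for every $c$; by the residuation characterization $x\le z-y\iff x+y\le z$ this is equivalent to $f(c)+g(d)\le M(c,d)$, hence $\delta(c,d)=M(c,d)-\paren{f(c)+g(d)}\ge 0$. For (b), a vanishing $\delta(c,d)=0$ forces $f(c),g(d),M(c,d)\in\R$ by Lemma~\ref{lem:gap-zero-finite}, so the equation becomes genuine real arithmetic $g(d)=M(c,d)-f(c)$, which is the witness condition. For (c), the reverse implication is immediate once $f,g$ are finite-valued (the witness equation $g(d)=M(c,d)-f(c)$ plugged into $\delta$ yields $0$), and the ``every column contains a zero'' assertion follows because $\cat C$ is finite and $g(d)=\min_c\paren{M(c,d)-f(c)}$ is attained. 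Part (d) is the exact dual using $f=M_*g$, which is available precisely because $(f,g)\in\Nuc(M)$.

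Part (e) is the main step. From the closure operator inequality~\eqref{eq:closure}, one already has $f\le M_*M^*f=M_*g$ pointwise. To get the reverse inequality at a fixed $c$, I use the hypothesis that row $c$ of $\delta^f$ contains a zero: pick $d^\ast$ with $\delta^f(c,d^\ast)=0$. By Lemma~\ref{lem:gap-zero-finite} this forces $f(c),g(d^\ast),M(c,d^\ast)\in\R$, so the identity $\delta^f(c,d^\ast)=0$ rearranges to $f(c)=M(c,d^\ast)-g(d^\ast)$. Then
\[
(M_*g)(c)=\min_{d\in\cat D}\paren{M(c,d)-g(d)}\le M(c,d^\ast)-g(d^\ast)=f(c),
\]
which combined with $f(c)\le (M_*g)(c)$ gives equality. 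Since $c$ was arbitrary and finiteness of $f$ allows us to do this at every $c$, we conclude $f=M_*M^*f$. The only real obstacle is bookkeeping the $\Rbar$-arithmetic: ensuring that every subtraction appearing in the witness identity is a genuine difference of finite reals, which is precisely what Lemma~\ref{lem:gap-zero-finite} guarantees at each row's zero, and what the hypothesis ``$f$ finite-valued'' plus nondegeneracy (giving $g$ finite-somewhere) makes usable globally.
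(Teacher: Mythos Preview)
Your proof is correct and follows essentially the same approach as the paper: parts (a)--(d) are dispatched directly from the definitions and Lemma~\ref{lem:gap-zero-finite}, and part (e) combines the closure inequality $f\le M_*M^*f$ with a row-zero witness to obtain the reverse inequality. The only quibble is your closing remark invoking nondegeneracy to make $g$ ``finite-somewhere'': this plays no role in the argument for (e), since Lemma~\ref{lem:gap-zero-finite} already supplies the finiteness needed at each row's chosen zero, and that is all you use.
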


\begin{proof}
	(a) Since $g(d)=\min_{c'}\paren{M(c',d)-f(c')}$, we have
	$g(d)\le M(c,d)-f(c)$ for every $c$.
	By residuation this is equivalent to $f(c)+g(d)\le M(c,d)$,
	hence $\delta(c,d)\ge 0$.

	(b) If $\delta(c,d)=0$, then
	Lemma~\ref{lem:gap-zero-finite} shows the relevant entries are
	finite, so subtraction is ordinary:
	$0=M(c,d)-f(c)-g(d)$, hence $g(d)=M(c,d)-f(c)$ and $c$
	realizes the minimum in $M^*f(d)$.

	(c) If $f,g$ are finite-valued and $c$ is a witness for $f$
	at~$d$, then $g(d)=M(c,d)-f(c)$ and $\delta(c,d)=0$.
	The converse is~(b).
	Since $\cat C$ is finite, every minimum defining $g(d)$ is
	attained, so every column contains a witness and hence a zero.

	(d) Apply~(c) to the dual description $f=M_*g$.
\end{proof}

\begin{corollary}\label{cor:pairs}
	If $(f,g)\in\pnuc(M)$ and $f,g$ are finite-valued, then $c$ is a
	witness for $f$ at~$d$ if and only if $d$ is a witness for $g$
	at~$c$.
\end{corollary}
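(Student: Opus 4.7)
The plan is to read off the corollary directly from parts (c) and (d) of Proposition~\ref{prop:gap}, using that being in the nucleus makes the two gap matrices coincide. Specifically, since $(f,g)\in\pnuc(M)$, any representative pair lies in $\Nuc(M)$, so $g=M^*f$ and $f=M_*g$. By Definition~\ref{def:gap}, the gap matrices computed from the presheaf side and the copresheaf side then agree: $\delta^{f}=\delta^{g}=\delta^{(f,g)}$.

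Next, invoke Proposition~\ref{prop:gap}(c) for the presheaf side: since $f$ and $g$ are finite-valued, $\delta^{(f,g)}(c,d)=0$ if and only if $c$ is a witness for $f$ at $d$. Then invoke Proposition~\ref{prop:gap}(d) for the copresheaf side: under the same finiteness and nucleus hypotheses, $\delta^{(f,g)}(c,d)=0$ if and only if $d$ is a witness for $g$ at $c$. Chaining the two equivalences through the common middle condition $\delta^{(f,g)}(c,d)=0$ yields the desired biconditional.

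There is no real obstacle here — the corollary is essentially a bookkeeping statement that the symmetric zero pattern of the gap matrix encodes both witness relations simultaneously. The one subtlety worth flagging, already handled by Lemma~\ref{lem:gap-zero-finite}, is that a zero entry of $\delta$ automatically forces the relevant values $f(c)$, $g(d)$, $M(c,d)$ to lie in $\R$, so the residuation $M(c,d)-f(c)-g(d)=0$ is an honest numerical identity and the equalities $g(d)=M(c,d)-f(c)$ and $f(c)=M(c,d)-g(d)$ hold as ordinary arithmetic. This is implicit in the proofs of parts (c) and (d) and requires no additional work here.
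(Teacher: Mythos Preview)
Your proof is correct and follows exactly the paper's own approach: the paper's proof simply cites Proposition~\ref{prop:gap}(c) and (d), and you have spelled out the chain of equivalences through the common condition $\delta^{(f,g)}(c,d)=0$. The extra remark about Lemma~\ref{lem:gap-zero-finite} is accurate but unnecessary here, since it is already absorbed into the proof of Proposition~\ref{prop:gap}.
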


\begin{proof}
	This is Proposition~\ref{prop:gap}\textup{(c)}
	and~\textup{(d)}.
\end{proof}

\begin{corollary}\label{cor:witness-fixedpoints}
	A finite-valued presheaf $f$ is a fixed point of $M_*M^*$ if and
	only if every row of $\delta^{f}$ contains a zero.
	Equivalently, for every $c\in\cat C$ there exists $d\in\cat D$
	such that $d$ is a witness for $M^*f$ at~$c$.
\end{corollary}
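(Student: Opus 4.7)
The plan is to handle the two implications separately and then observe that the reformulation is definitional.

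One direction is immediate: the implication ``every row of $\delta^{f}$ contains a zero $\Rightarrow f=M_*M^*f$'' is exactly the content of Proposition~\ref{prop:gap}(e), so I would simply cite it with no further work.

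For the converse, set $g:=M^*f$ and use finiteness of $\cat D$ to attain the minimum in $(M_*g)(c)=\min_{d}\paren{M(c,d)-g(d)}$ at some $d_c\in\cat D$. The main step is to show that this attainer gives $\delta^{f}(c,d_c)=0$. The only nontrivial point is that residuation in $\Rbar$ must be converted to ordinary arithmetic, which requires $g(d_c)$ (and hence $M(c,d_c)$) to be finite. A short case analysis using the defining formula \eqref{eq:minus} rules out $g(d_c)\in\set{\pm\infty}$: for $g(d_c)=-\infty$ one gets $M(c,d_c)-g(d_c)=+\infty$, and for $g(d_c)=+\infty$ the residuation lies in $\set{-\infty,+\infty}$, in both cases contradicting $f(c)\in\R$. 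Similarly $M(c,d_c)$ must be finite. With all entries finite at $(c,d_c)$, the equation $f(c)=M(c,d_c)-g(d_c)$ is ordinary subtraction and rearranges to $f(c)+g(d_c)=M(c,d_c)$, i.e.\ $\delta^{f}(c,d_c)=0$.

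Finally, the equivalent reformulation is a direct reading of Definition~\ref{def:witness}: $d$ is a witness for $g=M^*f$ at $c$ iff $f(c)=M(c,d)-g(d)$, i.e.\ iff $d$ attains the minimum defining $(M_*g)(c)$; in view of the finiteness analysis above this is the same as $\delta^{f}(c,d)=0$. The only mild obstacle is the $\Rbar$ boundary bookkeeping showing that the attainer yields finite values everywhere needed; this is routine but not automatic because $\Rbar$-residuation disagrees with ordinary subtraction at infinity.
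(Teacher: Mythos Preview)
Your proposal is correct and follows essentially the same approach as the paper's proof: one direction cites Proposition~\ref{prop:gap}(e), and the other uses finiteness of $\cat D$ to find an attainer $d_c$ of the minimum defining $(M_*g)(c)=f(c)$ and conclude $\delta^{f}(c,d_c)=0$. Your treatment is in fact more careful than the paper's, which simply asserts ``and then $\delta^{f}(c,d)=0$'' without the $\Rbar$-boundary bookkeeping you supply to ensure $g(d_c)$ and $M(c,d_c)$ are finite.
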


\begin{proof}
	If $f$ is finite-valued and fixed by $M_*M^*$, write $g:=M^*f$,
	so $f=M_*g$.
	Since $\cat D$ is finite, for each $c$ the minimum defining
	$f(c)$ is attained at some $d$, and then $\delta^{f}(c,d)=0$.
	Conversely, if every row contains a zero, fix $c$ and choose $d$
	with $\delta^{f}(c,d)=0$.
	Then $f(c)=M(c,d)-g(d)$, hence
	\[
		(M_*g)(c)=\min_{d'}\paren{M(c,d')-g(d')}\le M(c,d)-g(d)=f(c).
	\]
	On the other hand $f\le M_*M^*f=M_*g$ by~\eqref{eq:closure}, so
	equality holds coordinatewise.
\end{proof}

For a finite-valued nucleus point, the witness pattern is exactly
the zero set of the gap matrix.

\begin{definition}\label{def:Z}
	For $(f,g)\in\pnuc(M)$ with $f,g$ finite-valued, define the
	\emph{witness relation}
	\begin{equation}\label{eq:Z}
		Z(f,g):=\set{(c,d)\in\cat C\times\cat D\mid
		\delta^{(f,g)}(c,d)=0}.
	\end{equation}
\end{definition}

By Proposition~\ref{prop:gap}\textup{(d)}, the relation $Z(f,g)$
meets every row and every column.
These relations partition the finite part of $\pnuc(M)$: we declare
$(f,g)\sim(f',g')$ if $Z(f,g)=Z(f',g')$.

\begin{definition}\label{def:witness-cell}
	For a relation $Z\subseteq \cat C\times\cat D$ meeting every row
	and column, define the \emph{open witness cell}
	\[
		\Cell^{\circ}(Z):=\set{(f,g)\in\pnuc(M)\mid f,g\text{
		finite-valued and }Z(f,g)=Z}.
	\]
\end{definition}

\begin{remark}\label{rem:cell-polytope}
	After choosing an affine chart for the projective quotient (for
	example $\min f=0$), the closure of $\Cell^{\circ}(Z)$ is a
	classical polytope, cut out by the equalities
	$M(c,d)=f(c)+g(d)$ for $(c,d)\in Z$ together with the
	inequalities $M(c,d)\ge f(c)+g(d)$ for all $(c,d)$.
	The open cell $\Cell^{\circ}(Z)$ is its relative interior.
	We develop this in Section~\ref{subsec:polyhedral_pnuc}.
\end{remark}
Because $\cat D$ is discrete, the copresheaf represented by
$d\in\cat D$ is the delta function $g_d(d)=0$ and
$g_d(d')=-\infty$ for $d'\neq d$.
A direct computation gives $M_*g_d=M(-,d)$: the columns of the
matrix $M$ are the images of the representables.

\begin{definition}\label{def:anchors}
	For $d\in\cat D$ define the $d$th \emph{anchor presheaf}
	$A_d\colon\cat{C}\to \Rbar$ by $A_d=M(-,d)$.
\end{definition}

\begin{proposition}\label{prop:anchors}
	Each anchor $A_d$ lies in
	$\fix(M_*M^*)=\im(M_*)$.
	Moreover, for any $f\in\fix(M_*M^*)$ there exist weights
	$\lambda_d\in\Rbar$ such that
	\begin{equation}\label{eq:anchor-hull}
		f(c)=\min_{d\in\cat D}\paren{A_d(c)-\lambda_d},\quad
		c\in\cat C.
	\end{equation}
\end{proposition}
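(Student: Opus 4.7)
The plan is to derive both halves of the proposition from a single identity, namely that for any fixed point $f=M_*M^*f$ one has $f=M_*(M^*f)$, together with the observation that the anchors $A_d$ are themselves images under $M_*$ of the representable copresheaves. The calculations are essentially forced once the residuation conventions of \S\ref{subsec:Rbar} are unpacked; the only genuine care is in handling the value $-\infty$ under residuation.

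First I would verify the computation $M_*g_d = A_d$ stated in the text. Applying \eqref{eq:Mlowerstar} gives
\[
(M_*g_d)(c)=\min_{d'\in\cat D}\paren{M(c,d')-g_d(d')}.
\]
For $d'=d$ this contributes $M(c,d)-0=M(c,d)$. For $d'\neq d$, one has $g_d(d')=-\infty$, and by the residuation definition \eqref{eq:minus} the term $M(c,d')-(-\infty)$ equals $+\infty$ (since for every $y\in\Rbar$ the sum $y+(-\infty)=-\infty\le M(c,d')$, so the sup is $+\infty$). Hence the minimum is realized at $d'=d$, giving $(M_*g_d)(c)=M(c,d)=A_d(c)$. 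Thus $A_d\in\im(M_*)$, which by Proposition~\ref{prop:nucleus-fixedpoints} coincides with $\fix(M_*M^*)$, settling the first assertion.

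For the second assertion, let $f\in\fix(M_*M^*)$ and set $g:=M^*f$, so that $f=M_*g$. Unfolding $M_*$ via \eqref{eq:Mlowerstar} yields
\[
f(c)=\min_{d\in\cat D}\paren{M(c,d)-g(d)}=\min_{d\in\cat D}\paren{A_d(c)-g(d)},
\]
which is exactly \eqref{eq:anchor-hull} with $\lambda_d:=g(d)\in\Rbar$.

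The step that most deserves attention is the residuation in the first paragraph, since it is the only place where the extended arithmetic of $\Rbar$ enters nontrivially; once that is in hand, both parts reduce to unpacking the definition of $M_*$. A minor point worth recording for cleanliness is that the weights $\lambda_d=(M^*f)(d)$ may a priori take the value $\pm\infty$, which is consistent with the statement of the proposition and does not affect the validity of the displayed minimum.
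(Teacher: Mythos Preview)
Your proof is correct and follows essentially the same route as the paper: both arguments observe $A_d=M_*g_d\in\im(M_*)=\fix(M_*M^*)$, and then for $f\in\fix(M_*M^*)$ write $f=M_*g$ and read off $\lambda_d=g(d)$. The only cosmetic difference is that you explicitly take $g=M^*f$ whereas the paper invokes an arbitrary $g$ with $f=M_*g$, and you spell out the residuation $M(c,d')-(-\infty)=+\infty$ that the paper leaves as a ``direct computation'' in the text preceding the proposition.
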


\begin{proof}
	Since $A_d=M_*g_d$, we have
	$A_d\in\im(M_*)=\fix(M_*M^*)$.
	Conversely, if $f\in\fix(M_*M^*)=\im(M_*)$ then $f=M_*g$ for
	some copresheaf $g$, hence
	\[
		f(c)=\min_{d\in\cat D}\paren{M(c,d)-g(d)}
		=\min_{d\in\cat D}\paren{A_d(c)-\lambda_d}
	\]
	with $\lambda_d:=g(d)$.
\end{proof}

Equivalently, $\fix(M_*M^*)$ is the closure of the anchors $A_d$
under weighted coproducts: every fixed point $f$ can be written as
the pointwise minimum of translates $A_d-\lambda_d$, and conversely
every such minimum lies in $\fix(M_*M^*)$.

\begin{proposition}\label{prop:compact}
	If $M\colon\cat C\times\cat D\to\R$ is a real-valued matrix on
	finite sets, then $\pnuc(M)$ is compact.
\end{proposition}

\begin{proof}
	Since $M$ is real-valued, each anchor $A_d=M(-,d)$ is
	real-valued, and Proposition~\ref{prop:anchors} then implies
	that every fixed point of $M_*M^*$ admits a real-valued
	representative.
	On the affine slice $\set{f\mid f(c_0)=0}$, the Isbell
	inequalities $f(c)+g(d)\le M(c,d)$ bound $f$ and $g$
	coordinatewise, so the nucleus embeds as a closed bounded
	subset of $\R^{|\cat C|-1}$.
\end{proof}

\section{Polyhedral geometry, event loci, and lattice towers}\label{subsec:polyhedral_pnuc}

In Section~\ref{sec:witness_cells} we associated to each finite-valued nucleus point $(f,g)$ its \emph{gap matrix}
\[
	\delta^{(f,g)}(c,d):=M(c,d)-f(c)-g(d)
\]
and its zero set
\[
	Z(f,g):=\set{(c,d)\in\cat C\times\cat D\mid \delta^{(f,g)}(c,d)=0}.
\]
This section develops the polyhedral side of that data.
We pass from the witness relation to explicit witness polyhedra, prove that positive gap values are exact distances to event loci, refine the witness decomposition by order chambers, and show that pointed thresholding of the gap matrix yields a chamberwise constructible sheaf of concept-lattice towers.

\subsection{Witness polyhedra and admissibility}

We continue with $\cat C$ and $\cat D$ finite and now assume that
$M\colon\cat C\times\cat D\to\R$ has finite real entries.
By Proposition~\ref{prop:compact}, $\pnuc(M)$ is compact and each
projective class admits a real-valued representative.

To describe the polyhedral structure concretely, we fix an affine
chart.
Choose $c_0\in\cat C$ and set
\[
	\Nuc(M)_0:=\set{(f,g)\in\Nuc(M)\mid f(c_0)=0}.
\]
Since each class in $\pnuc(M)$ has a unique representative in
$\Nuc(M)_0$, this identifies $\Nuc(M)_0\cong\pnuc(M)$ as sets;
we use this chart when explicit coordinates are needed, but the
metric and cell structures are independent of the choice of~$c_0$.

Working in this chart, the Isbell inequalities
\[
	f(c)+g(d)\le M(c,d)\text{ for all }c\in\cat C,d\in\cat D
\]
cut out a feasibility polyhedron in $\R^{\cat C}\times\R^{\cat D}$.
The Isbell fixed-point conditions for the nucleus are equivalent to the requirement that every row and every column attains
equality.
Given $Y\subseteq\cat C\times\cat D$, we obtain a smaller closed polyhedron by forcing equality along $Y$.

\begin{definition}\label{def:cover}
	Let $Y\subseteq\cat C\times\cat D$.
	We say that $Y$ \emph{covers $\cat C$} if for every $c\in\cat C$ there exists $d\in\cat D$ with $(c,d)\in Y$.
	We say that $Y$ \emph{covers $\cat D$} if for every $d\in\cat D$ there exists $c\in\cat C$ with $(c,d)\in Y$.
\end{definition}

\begin{definition}\label{def:CellY}
	For $Y\subseteq\cat C\times\cat D$, define $\Cell(Y)$ to be the set of pairs of real-valued functions
	$(f,g)\in\R^{\cat C}\times\R^{\cat D}$ satisfying the gauge condition $f(c_0)=0$ and the constraints
	\begin{align}
		f(c)+g(d) & \le M(c,d)\text{ for all }(c,d)\in\cat C\times\cat D,\label{eq:Cell-ineq} \\
		f(c)+g(d) & = M(c,d)\text{ for all }(c,d)\in Y.\label{eq:Cell-eq}
	\end{align}
\end{definition}

Thus $\Cell(Y)$ is a (possibly empty) closed polyhedron.
For general $Y$, feasibility of \eqref{eq:Cell-ineq}--\eqref{eq:Cell-eq} does not imply that $(f,g)$ is a nucleus point.
The next lemma isolates the combinatorial condition that forces the Isbell equalities in every row or column.

\begin{lemma}\label{lem:Cell-implies-Isbell}
	Let $Y\subseteq\cat C\times\cat D$ and let $(f,g)$ satisfy \eqref{eq:Cell-ineq}--\eqref{eq:Cell-eq}.
	\begin{enumerate}
		\item[(a)] If $Y$ covers $\cat D$, then $g=M^*f$.
		\item[(b)] If $Y$ covers $\cat C$, then $f=M_*g$.
	\end{enumerate}
	Consequently, if $Y$ covers both $\cat C$ and $\cat D$, then every $(f,g)\in\Cell(Y)$ lies in $\Nuc(M)_0$.
\end{lemma}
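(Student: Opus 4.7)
The plan is to prove (a) by a two-sided squeeze of $g(d)$ against $M^*f(d)=\min_c\bigl(M(c,d)-f(c)\bigr)$, with (b) following by the symmetric argument for $M_*$. The inequality \eqref{eq:Cell-ineq} rewrites pointwise as $g(d)\le M(c,d)-f(c)$ for every $c\in\cat C$, so taking the minimum over $c$ gives $g(d)\le M^*f(d)$ for every $d$; note that this direction uses neither the equality constraints \eqref{eq:Cell-eq} nor any property of $Y$.

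For the reverse inequality in (a), I would fix $d\in\cat D$ and use the hypothesis that $Y$ covers $\cat D$ to pick some $c_d\in\cat C$ with $(c_d,d)\in Y$. The tight equality \eqref{eq:Cell-eq} gives $g(d)=M(c_d,d)-f(c_d)$, and this real number is at least $\min_c\bigl(M(c,d)-f(c)\bigr)=M^*f(d)$. Combining the two bounds yields $g(d)=M^*f(d)$ for every $d$, proving (a). Part (b) is dual: the same feasibility inequalities give $f(c)\le M_*g(c)$ pointwise, and for each $c$ the covering of $\cat C$ produces a $d_c$ with $(c,d_c)\in Y$, whose tight equality realizes the minimum in the definition of $M_*g(c)$.

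The consequence is immediate from (a) and (b): if $Y$ covers both $\cat C$ and $\cat D$, then $g=M^*f$ and $f=M_*g$ simultaneously, so $(f,g)\in\Nuc(M)$, and the gauge constraint $f(c_0)=0$ built into $\Cell(Y)$ places $(f,g)$ in $\Nuc(M)_0$. There is no real obstacle here; the only subtlety worth flagging is that, because $M$ has real entries and we work in the real affine chart, every subtraction $M(c,d)-f(c)$ is ordinary real subtraction and the infimum defining $M^*$ (resp.\ $M_*$) is attained on the finite indexing set, so the witness argument that provides the matching lower (resp.\ upper) bound is justified without any residuation pathology of the kind isolated in Lemma~\ref{lem:gap-zero-finite}.
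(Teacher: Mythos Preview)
Your proof is correct and follows essentially the same approach as the paper: a two-sided squeeze using the feasibility inequalities for one direction and a witness from the covering hypothesis for the other, with (b) handled symmetrically and the final consequence immediate. Your closing remark about ordinary real subtraction and attained minima is a harmless extra observation not present in the paper's version.
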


\begin{proof}
	(a) Fix $d\in\cat D$.
	From \eqref{eq:Cell-ineq} we obtain $g(d)\le M(c,d)-f(c)$ for all $c$, hence
	\[
		g(d)\le \min_{c\in\cat C}\paren{M(c,d)-f(c)}=(M^*f)(d).
	\]
	Since $Y$ covers $\cat D$, there exists $c$ with $(c,d)\in Y$.
	Then \eqref{eq:Cell-eq} gives $g(d)=M(c,d)-f(c)$, hence $g(d)\ge (M^*f)(d)$.
	Thus $g(d)=(M^*f)(d)$.

	(b) The proof is symmetric.
	If $Y$ covers both sides, (a) and (b) give $g=M^*f$ and $f=M_*g$, so $(f,g)\in\Nuc(M)$, and the gauge condition places it
	in $\Nuc(M)_0$.
\end{proof}

Covering both $\cat C$ and $\cat D$ is necessary for $\Cell(Y)$
to lie in the nucleus (Lemma~\ref{lem:Cell-implies-Isbell}), but
for most covering sets the system
\eqref{eq:Cell-ineq}--\eqref{eq:Cell-eq} is infeasible.

\begin{definition}\label{def:admissible}
	A subset $Y\subseteq\cat C\times\cat D$ is \emph{admissible} if it covers both $\cat C$ and $\cat D$ and $\Cell(Y)\neq\varnothing$.
\end{definition}

When $Y$ is admissible, $\Cell(Y)$ is a nonempty polyhedron consisting entirely of nucleus points by
Lemma~\ref{lem:Cell-implies-Isbell}.

\begin{lemma}\label{lem:Z-gives-Cell}
	Let $(f,g)\in\Nuc(M)_0$, and set
	\[
		Z(f,g)=\set{(c,d)\in\cat C\times\cat D\mid f(c)+g(d)=M(c,d)}.
	\]
	Then $(f,g)\in\Cell(Z(f,g))$. In particular, $Z(f,g)$ is admissible.
\end{lemma}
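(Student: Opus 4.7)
The overall strategy is to unpack the definitions of $\Cell(Y)$ (three constraints: a gauge, a pointwise inequality, and equalities on $Y$) and of admissibility (covering both sides plus nonemptiness). Given $(f,g)\in\Nuc(M)_0$, I would set $Y:=Z(f,g)$ and check each condition in turn; all three verifications are direct once the relevant nucleus conditions $g=M^*f$ and $f=M_*g$ are in hand. Admissibility of $Z(f,g)$ then falls out: the cell is nonempty because it contains $(f,g)$, and the covering conditions are exactly what Proposition~\ref{prop:gap}(c) and (d) deliver for a finite-valued nucleus point.

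More concretely, I would verify the three conditions of Definition~\ref{def:CellY} in the following order. \emph{Gauge condition:} $f(c_0)=0$ is built into the definition of $\Nuc(M)_0$. \emph{Pointwise inequality:} From $g=M^*f$ and formula \eqref{eq:Mupperstar}, for every $c\in\cat C$ and $d\in\cat D$ one has $g(d)\le M(c,d)-f(c)$; residuation (or direct finite arithmetic) yields $f(c)+g(d)\le M(c,d)$. \emph{Equalities on $Y$:} this is tautological, since membership in $Z(f,g)$ is defined by $\delta^{(f,g)}(c,d)=0$, i.e.\ $f(c)+g(d)=M(c,d)$.

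For admissibility I would proceed as follows. Nonemptiness of $\Cell(Z(f,g))$ is automatic from the first part, since $(f,g)$ itself is an element. For the covering conditions, I would invoke Proposition~\ref{prop:gap}(c), which says that for a finite-valued nucleus point every column of the gap matrix contains a zero, hence for each $d\in\cat D$ there exists $c$ with $(c,d)\in Z(f,g)$: $Z(f,g)$ covers $\cat D$. Dually, part (d) of the same proposition gives a zero in every row, so $Z(f,g)$ covers $\cat C$.

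The only real subtlety, and the step most likely to need a preliminary remark, is checking that Proposition~\ref{prop:gap}(c), (d) are applicable: these require $f$ and $g$ to be finite-valued. Under the standing hypotheses of this section ($M$ has finite real entries and $\pnuc(M)$ is compact by Corollary~\ref{cor:compact}), every projective class admits a real-valued representative, and the gauge choice $f(c_0)=0$ selects a real-valued representative in $\Nuc(M)_0$; one can see this directly by noting that $f(c_0)=0$ excludes $f\equiv -\infty$, which forces $g=M^*f$ to be bounded above and then $f=M_*g$ to be bounded below, and the real entries of $M$ prevent the remaining infinities. Once this is noted, the verification of admissibility is immediate and the lemma follows.
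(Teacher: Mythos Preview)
Your proof is correct and follows essentially the same route as the paper's, just with more detail: the paper tersely notes that the Isbell inequalities give \eqref{eq:Cell-ineq}, the definition of $Z(f,g)$ gives \eqref{eq:Cell-eq}, and finiteness of $\cat C,\cat D$ together with the nucleus condition force every row and column to attain equality. Your explicit appeal to Proposition~\ref{prop:gap}(c),(d) and your remark on finite-valuedness are perfectly fine elaborations of points the paper leaves implicit under its standing hypotheses.
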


\begin{proof}
	The inequalities \eqref{eq:Cell-ineq} are exactly the Isbell inequalities for $(f,g)$.
	The definition of $Z(f,g)$ is the equality condition \eqref{eq:Cell-eq}.
	Since $\cat C$ and $\cat D$ are finite and $(f,g)$ is a nucleus point, each row and each column attains equality, so $Z(f,g)$
	covers both $\cat C$ and $\cat D$.
\end{proof}

\begin{corollary}\label{cor:finite_union_cells}
	The set $\Nuc(M)_0\cong\pnuc(M)$ is a union of finitely many polytopes:
	\[
		\Nuc(M)_0=\bigcup_{Y\ \mathrm{admissible}}\Cell(Y).
	\]
\end{corollary}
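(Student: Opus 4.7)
The plan is to verify the set-theoretic equality by proving two inclusions, each of which is essentially packaged in one of the preceding lemmas, and then observe that the indexing set and each piece have the required finiteness and boundedness.

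For the inclusion $\Nuc(M)_0\subseteq\bigcup_{Y}\Cell(Y)$, I would fix an arbitrary $(f,g)\in\Nuc(M)_0$ and set $Y=Z(f,g)$. By Lemma~\ref{lem:Z-gives-Cell}, $Y$ is admissible and $(f,g)\in\Cell(Y)$, which gives the containment. For the reverse inclusion $\bigcup_{Y\text{ admissible}}\Cell(Y)\subseteq\Nuc(M)_0$, I would take any admissible $Y$ and any $(f,g)\in\Cell(Y)$; since admissibility requires $Y$ to cover both $\cat C$ and $\cat D$, Lemma~\ref{lem:Cell-implies-Isbell} immediately yields $g=M^*f$ and $f=M_*g$, while the gauge condition $f(c_0)=0$ built into Definition~\ref{def:CellY} places the pair in $\Nuc(M)_0$.

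The union is finite because the index set is a subset of the finite power set of $\cat C\times\cat D$. For the ``polytope'' (as opposed to merely ``polyhedron'') assertion, I would invoke Corollary~\ref{cor:compact}: $\pnuc(M)\cong \Nuc(M)_0$ is compact, so each closed subset $\Cell(Y)\subseteq\Nuc(M)_0$ is bounded. Each $\Cell(Y)$ is by construction the intersection of finitely many closed affine half-spaces and hyperplanes in $\R^{\cat C}\times\R^{\cat D}$, i.e. a closed polyhedron, and boundedness upgrades this to a polytope.

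I do not foresee a genuine obstacle here: the substantive content has already been done in the admissibility/covering analysis of Lemmas~\ref{lem:Cell-implies-Isbell} and~\ref{lem:Z-gives-Cell}, and in the compactness Corollary~\ref{cor:compact}. The only point to take care with is to stay consistent with the gauge: both $\Cell(Y)$ and $\Nuc(M)_0$ are defined under the normalization $f(c_0)=0$, so the identification with $\pnuc(M)$ works uniformly on every piece without needing an additional quotient step after taking the union.
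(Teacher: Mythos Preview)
Your proposal is correct and follows essentially the same approach as the paper: both arguments use Lemma~\ref{lem:Z-gives-Cell} for the forward inclusion, finiteness of the power set of $\cat C\times\cat D$ for the finite index, and Corollary~\ref{cor:compact} to upgrade polyhedra to polytopes. You are in fact slightly more explicit than the paper in spelling out the reverse inclusion via Lemma~\ref{lem:Cell-implies-Isbell}, which the paper leaves implicit in the definition of admissibility.
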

\begin{proof}
	By Lemma~\ref{lem:Z-gives-Cell}, each $(f,g)\in\Nuc(M)_0$ lies
	in $\Cell(Z(f,g))$, and $Z(f,g)$ is admissible.
	Since $\cat C\times\cat D$ is finite, there are only finitely
	many admissible subsets $Y$.

	Each $\Cell(Y)$ is a closed convex polyhedron by construction.
	Since $M$ is real-valued,
	Proposition~\ref{prop:compact} implies that $\Nuc(M)_0$ is
	compact, hence each nonempty
	$\Cell(Y)\subseteq\Nuc(M)_0$ is bounded.
	Thus every admissible $\Cell(Y)$ is a polytope.
\end{proof}

\begin{lemma}\label{lem:Cell-intersection}
	For any $Y,Y'\subseteq\cat C\times\cat D$ one has
	\[
		\Cell(Y)\cap\Cell(Y')=\Cell(Y\cup Y').
	\]
\end{lemma}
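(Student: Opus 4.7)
The plan is to simply unpack the definition of $\Cell(-)$ from Definition~\ref{def:CellY} and verify the two set inclusions, which at the end of the day is just the tautology that a conjunction of two families of equations is the same as the equations indexed by the union of the two families. There is no real obstacle here; this is a set-theoretic check, and I expect the whole argument to fit in two or three lines.

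First I would fix $(f,g)\in\R^{\cat C}\times\R^{\cat D}$ and note that the defining conditions of $\Cell(Y)$ fall into three groups: (i) the gauge condition $f(c_0)=0$, which is independent of $Y$; (ii) the universal inequalities \eqref{eq:Cell-ineq} stating that $f(c)+g(d)\le M(c,d)$ for every $(c,d)\in\cat C\times\cat D$, which are also independent of $Y$; and (iii) the equality constraints \eqref{eq:Cell-eq} indexed by $Y$. Only group (iii) depends on the chosen subset. Consequently, $(f,g)\in\Cell(Y)\cap\Cell(Y')$ if and only if $(f,g)$ satisfies (i), (ii), and both the $Y$-equalities and the $Y'$-equalities.

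Next I would observe that satisfying the equality $f(c)+g(d)=M(c,d)$ for every $(c,d)\in Y$ and for every $(c,d)\in Y'$ is the same as satisfying it for every $(c,d)\in Y\cup Y'$. Combining this with (i) and (ii), which are the same conditions on both sides, gives exactly the definition of $\Cell(Y\cup Y')$. This proves the inclusion $\Cell(Y)\cap\Cell(Y')\subseteq\Cell(Y\cup Y')$, and the reverse inclusion is immediate from $Y\subseteq Y\cup Y'$ and $Y'\subseteq Y\cup Y'$, since enforcing equality on a larger set is stronger than enforcing it on either piece.

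One small sanity check worth flagging: in the reverse inclusion one uses only that $\Cell$ is antitone in the equality index set, which follows from the definition because adding more equality constraints can only shrink the polyhedron. No nonemptiness or admissibility hypothesis on $Y$, $Y'$, or $Y\cup Y'$ is needed, so the identity holds at the level of arbitrary (possibly empty) polyhedra in $\R^{\cat C}\times\R^{\cat D}$.
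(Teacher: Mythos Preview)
Your proof is correct and takes essentially the same approach as the paper: both observe that the gauge condition and the inequalities \eqref{eq:Cell-ineq} are common to all $\Cell(\cdot)$, so the intersection is determined entirely by the union of the equality index sets. The paper compresses this into a single sentence, but the content is identical.
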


\begin{proof}
	The inequalities \eqref{eq:Cell-ineq} are common to all $\Cell(\cdot)$, and the equalities \eqref{eq:Cell-eq} imposed by $Y$ and by $Y'$
	together are exactly those imposed by $Y\cup Y'$.
\end{proof}

\begin{corollary}\label{cor:polytopal_complex}
	The admissible polytopes $\set{\Cell(Y)}$ form a finite polytopal complex inside $\Nuc(M)_0\cong\pnuc(M)$:
	if $Y$ and $Y'$ are admissible, then $\Cell(Y)\cap\Cell(Y')$ is either empty or a common face of both.
\end{corollary}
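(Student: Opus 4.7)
The plan is to reduce everything to a single algebraic identity and then invoke the standard "face via active constraints" description of polyhedra. Let $Y,Y'$ be admissible and suppose $\Cell(Y)\cap\Cell(Y')\neq\varnothing$. By Lemma~\ref{lem:Cell-intersection} the intersection coincides with $\Cell(Y\cup Y')$, so the first task is to certify $\Cell(Y\cup Y')$ as an element of the complex, and the second is to show that it appears as a common face of $\Cell(Y)$ and of $\Cell(Y')$.

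For the first task, note that since $Y$ already covers $\cat C$ and $\cat D$, so does $Y\cup Y'$; and $\Cell(Y\cup Y')$ is nonempty by hypothesis. Hence $Y\cup Y'$ is admissible in the sense of Definition~\ref{def:admissible}. For the second task, view $\Cell(Y)$ as the set in $\R^{\cat C}\times\R^{\cat D}$ defined by (i) the gauge equation $f(c_0)=0$, (ii) the equalities $f(c)+g(d)=M(c,d)$ for $(c,d)\in Y$, and (iii) the inequalities $f(c)+g(d)\le M(c,d)$ for $(c,d)\notin Y$. Passing to $\Cell(Y\cup Y')$ simply activates the subset of inequalities indexed by $Y'\setminus Y$, turning each $f(c)+g(d)\le M(c,d)$ into an equality $f(c)+g(d)=M(c,d)$. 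Each such active constraint is of the form $\ell\le 0$ with $\ell$ a linear functional that is $\le 0$ on all of $\Cell(Y)$, so $\{\ell=0\}\cap\Cell(Y)$ is a face; intersecting over the finitely many $(c,d)\in Y'\setminus Y$ still yields a face (faces are closed under finite intersection in a polyhedron). Thus $\Cell(Y\cup Y')$ is a face of $\Cell(Y)$, and by the symmetric argument with the roles of $Y,Y'$ swapped it is also a face of $\Cell(Y')$.

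The content here is genuinely light once Lemma~\ref{lem:Cell-intersection} is in hand; the only point requiring care is the mild bookkeeping that $\Cell(Y)$ is a polyhedron inside the affine slice cut out by $f(c_0)=0$ and the $Y$-equalities, so that the "face via active inequalities" lemma applies to the remaining inequality constraints indexed by $(\cat C\times\cat D)\setminus Y$ and gives a face of $\Cell(Y)$ rather than of some ambient object. Combined with Corollary~\ref{cor:finite_union_cells}, which already supplies finiteness and the covering of $\Nuc(M)_0$, this completes the verification that $\{\Cell(Y):Y\text{ admissible}\}$ is a finite polytopal complex.
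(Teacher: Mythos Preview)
Your argument is correct and follows the same route as the paper: use Lemma~\ref{lem:Cell-intersection} to identify the intersection as $\Cell(Y\cup Y')$, observe that $Y\cup Y'$ is admissible, and then note that it is obtained from either $\Cell(Y)$ or $\Cell(Y')$ by activating additional defining inequalities, hence is a face of each. The paper's proof is terser (it simply asserts that imposing additional linear equalities yields a face), but your expanded justification via the ``face from active constraints'' description is exactly the unpacking of that step.
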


\begin{proof}
	By Lemma~\ref{lem:Cell-intersection}, the intersection is $\Cell(Y\cup Y')$.
	If it is nonempty, then $Y\cup Y'$ covers both $\cat C$ and $\cat D$, hence is admissible, and $\Cell(Y\cup Y')$ is obtained from
	$\Cell(Y)$ and $\Cell(Y')$ by imposing additional linear equalities.
	Therefore it is a face of each.
\end{proof}

\subsection{From witness polyhedra to witness cells}
The witness polyhedra $\Cell(Y)$ are best regarded as closures of the open witness cells from \S\ref{sec:witness_cells}.
For $(f,g)\in\Nuc(M)_0$, write $\delta=\delta^{(f,g)}$ and $Z=Z(f,g)=\delta^{-1}(0)$.
Then $\Cell(Z)$ is the smallest witness polyhedron containing $(f,g)$, and its relative interior consists of those points
for which no additional inequalities become equalities:
\[
	\Cell^{\circ}(Z)
	=\set{(f',g')\in\Cell(Z)\mid \delta^{(f',g')}(c,d)>0\text{ for all }(c,d)\notin Z}.
\]
In particular, for every admissible $Z$ the witness cell $\Cell^{\circ}(Z)$ is the relative interior of the polytope $\Cell(Z)$.

In the classical min-plus setting this recovers the type decomposition of a tropical polytope, for instance via tropical hyperplane
arrangements \cite{develinSturmfels2004tropical}.
The gap matrix controls not only the combinatorics but also the
metric geometry of this decomposition: each positive entry
$\delta^{(f,g)}(c,d)$ is the exact projective distance from
$(f,g)$ to the locus where $(c,d)$ becomes a witness pair.
This is the Events Theorem, proved in
\S\ref{subsec:events} below.

\subsection{The events theorem}\label{subsec:events}

We now show that each positive entry of the gap matrix is the exact
projective distance to the locus where the corresponding inequality
becomes tight.
Related formulas for the distance to a half-space in Hilbert's
projective metric appear in idempotent semimodule
theory~\cite{Nitica01062007}; the result here is stronger because
nucleus points are constrained by the Isbell equations, which
couple all rows and columns simultaneously.

Fix a point $(f,g)\in\pnuc(M)$ and let $\delta=\delta^{(f,g)}$ be its gap matrix.

\begin{definition}
	For each pair $(c,d)\in \cat C \times \cat D$, we define the corresponding \emph{event locus}
	\[
		\mathcal{E}_{c,d}:=\set{(f',g')\in\pnuc(M)\mid \delta^{(f',g')}(c,d)=0}.
	\]
\end{definition}
Thus $\mathcal{E}_{c,d}$ is the locus in $\pnuc(M)$ where $(c,d)$ is a witness pair.

\begin{lemma}\label{lem:events}
	Let $(f,g)\in\pnuc(M)$ and let $\lambda=\delta^{(f,g)}(c_i,d_j)>0$.
	Then there exists $(f',g')\in \mathcal{E}_{c_i,d_j}$ such that
	\[
		d_{\pnuc}\paren{(f,g),(f',g')}=\lambda.
	\]
\end{lemma}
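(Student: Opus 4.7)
The plan is to construct the required point $(f',g')$ explicitly by perturbing the copresheaf side of $(f,g)$ at the single coordinate $d_j$ and then projecting back into the nucleus by one round of Isbell conjugation.  Concretely, I would define $h\colon\cat D\to\R$ by $h(d_j)=g(d_j)+\lambda$ and $h(d)=g(d)$ for $d\ne d_j$, and set
\[
f':=M_*h,\qquad g':=M^*f'.
\]
By Remark~\ref{rem:constructing-nucleus-points}, the pair $(f',g')$ lies in $\Nuc(M)$, and since $f,g,\lambda$ are finite and $M$ is nondegenerate, $f'$ and $g'$ are finite valued, so $[(f',g')]\in\pnuc(M)$.

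The next step is to identify $f'$ and $g'$ pointwise in terms of the original gap matrix $\delta=\delta^{(f,g)}$.  Expanding the definition of $M_*$ gives
\[
f'(c)=\min\set{f(c)+\delta(c,d_j)-\lambda,\ f(c)+\min_{d\ne d_j}\delta(c,d)},
\]
and using that every row of $\delta$ contains a zero (Corollary~\ref{cor:witness-fixedpoints}) this collapses to
\[
f'(c)=f(c)-\max\paren{0,\lambda-\delta(c,d_j)}.
\]
In particular $f'(c_i)=f(c_i)$, and for any column witness $c^*$ of $d_j$ (which exists because $(f,g)\in\pnuc(M)$) one has $f'(c^*)=f(c^*)-\lambda$.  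On the copresheaf side, antitonicity of $M^*$ gives $g\le g'$ pointwise, while $g'=M^*M_*h\le h$ pointwise because $M^*M_*$ is a closure in the opposite-order copresheaf category.  The resulting sandwich $g\le g'\le h$ forces $g'(d)=g(d)$ for $d\ne d_j$, and a direct evaluation $g'(d_j)=g(d_j)+\min_c\max\paren{\delta(c,d_j),\lambda}$ together with the existence of $c^*$ gives $g'(d_j)=g(d_j)+\lambda$.

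From these formulas the three required properties fall out at once.  The event condition is a one-line check:
\[
\delta^{(f',g')}(c_i,d_j)=M(c_i,d_j)-f(c_i)-\paren{g(d_j)+\lambda}=\lambda-\lambda=0,
\]
so $(f',g')\in\mathcal E_{c_i,d_j}$.  The function $f'-f$ takes values in $[-\lambda,0]$ and attains both endpoints ($0$ at $c_i$, $-\lambda$ at $c^*$), so by Proposition~\ref{prop:projective-metric} we have $d_{\cat C}([f'],[f])=\lambda$; and $g'-g=\lambda\cdot\mathbf 1_{d_j}$ has spread $\lambda$, so $d_{\cat D}([g'],[g])=\lambda$.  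Taking the maximum yields $d_{\pnuc}\paren{[(f,g)],[(f',g')]}=\lambda$.

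The delicate step is the pointwise identification of $f'$ and $g'$.  A naive perturbation---say, shifting only $f(c_i)$ upward by $\lambda$---typically leaves the nucleus, and the Isbell closure then either destroys the equality $\delta^{(f',g')}(c_i,d_j)=0$ or distorts the oscillation away from $\lambda$.  Perturbing on the side opposite to the event condition, and exploiting the row/column witness structure of $\delta$ to control the sandwich $g\le g'\le h$, is what pins the distance down to be exactly $\lambda$ rather than merely bounded above by it.
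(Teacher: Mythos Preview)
Your construction is the exact dual of the paper's: the paper perturbs $f$ at $c_i$ by $+\lambda$ and closes under $M_*M^*$, while you perturb $g$ at $d_j$ by $+\lambda$ and close under $M^*M_*$.  Both routes work, and your computation of $f'$ and of $d_{\cat C}([f],[f'])=\lambda$ is correct.  However, your analysis of $g'$ contains a genuine error.

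The inequality $g'=M^*M_*h\le h$ is backwards.  By~\eqref{eq:closure} (stated in the pointwise order) one has $h\le M^*M_*h$, so the correct relation is $g'\ge h$, not $g'\le h$.  Your sandwich $g\le g'\le h$ therefore fails, and the conclusion ``$g'(d)=g(d)$ for $d\ne d_j$'' is false in general.  In the running example with $(c_i,d_j)=(c_2,d_3)$ and $\lambda=1.9$, your construction gives $f'=(-0.3,-1.9,0)$ and $g'=(1.0,-1.6,2.0,-2.9)$, so $g'(d_1)=1.0\ne 0.7=g(d_1)$.

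The fix is easy.  Using your formula for $f'$, one has
\[
g'(d)-g(d)=\min_{c}\bigl(\delta(c,d)+\max(0,\lambda-\delta(c,d_j))\bigr)\in[0,\lambda],
\]
attaining $\lambda$ at $d=d_j$ and $0$ at any row witness $d_k$ of $c_i$ (so $\delta(c_i,d_k)=0$, $\delta(c_i,d_j)=\lambda$).  Thus $d_{\cat D}([g],[g'])=\lambda$ as claimed.  Alternatively, since $(f,g)$ and $(f',g')$ are nucleus points and you already have $d_{\cat C}([f],[f'])=\lambda$, Theorem~\ref{thm:projective-isometries} gives $d_{\cat D}([g],[g'])=\lambda$ immediately.

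Finally, your closing paragraph mischaracterizes the $f$-side perturbation: shifting only $f(c_i)$ upward by $\lambda$ and then applying the Isbell closure is precisely the paper's proof, and neither the event equality nor the oscillation is distorted.
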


\begin{proof}
	Work in the gauge slice $\Nuc(M)_0$ and choose representatives with $f(c_0)=0$.
	Define $f''\colon\cat C\to\R$ by
	\[
		f''(c)=
		\begin{cases}
			f(c)           & c\ne c_i, \\
			f(c_i)+\lambda & c=c_i.
		\end{cases}
	\]
	Set $g':=M^*f''$ and $f':=M_*g'=M_*M^*f''$.
	By the identities $M^*M_*M^*=M^*$ and $M_*M^*M_*=M_*$, we have $g'=M^*f'$ and $f'=M_*g'$, hence $(f',g')\in\Nuc(M)_0$.

	For each $d\in\cat D$ one has
	\begin{align*}
		g'(d)
		 & = \min\left(\min_{c\ne c_i}\paren{M(c,d)-f(c)},\, M(c_i,d)-f(c_i)-\lambda\right) \\
		 & = \min\left(g(d),\, g(d)+\delta^{(f,g)}(c_i,d)-\lambda\right)                    \\
		 & = g(d)-\max\paren{\lambda-\delta^{(f,g)}(c_i,d),0}.
	\end{align*}
	In particular,
	\[
		g(d)-g'(d)=\max\paren{\lambda-\delta^{(f,g)}(c_i,d),0}
	\]
	takes values in $[0,\lambda]$.
	Moreover, $g(d_j)=g'(d_j)$ because $\delta^{(f,g)}(c_i,d_j)=\lambda$.
	Since $(f,g)\in\Nuc(M)_0$, row $c_i$ of $\delta^{(f,g)}$ contains a zero by Proposition~\ref{prop:gap}\textup{(d)}; choose $d_k$ with $\delta^{(f,g)}(c_i,d_k)=0$.
	Then $g(d_k)-g'(d_k)=\lambda$.
	Therefore $d_{\cat D}([g],[g'])=\lambda$.

	Since $(f,g)$ and $(f',g')$ are nucleus points, $f=M_*g$ and $f'=M_*g'$.
	The map $M_*$ is $1$-Lipschitz for the projective metrics by Theorem~\ref{thm:projective-isometries}, so
	\[
		d_{\cat C}([f],[f'])\le d_{\cat D}([g],[g'])=\lambda.
	\]
	By definition of $d_{\pnuc}$ it follows that
	\[
		d_{\pnuc}\paren{(f,g),(f',g')}=\lambda.
	\]

	Finally, at the distinguished pair $(c_i,d_j)$ we have $g'(d_j)=g(d_j)$ and
	\[
		f'(c_i)
		=\min_{d\in\cat D}\paren{M(c_i,d)-g'(d)}
		\le M(c_i,d_j)-g'(d_j)
		=M(c_i,d_j)-g(d_j)
		=f(c_i)+\lambda.
	\]
	On the other hand, $f''\le f'=M_*M^*f''$ by extensivity of the closure operator $M_*M^*$, so
	$f'(c_i)\ge f''(c_i)=f(c_i)+\lambda$.
	Thus $f'(c_i)=f(c_i)+\lambda$, and
	\[
		\delta^{(f',g')}(c_i,d_j)
		=M(c_i,d_j)-f'(c_i)-g'(d_j)
		=\delta^{(f,g)}(c_i,d_j)-\lambda
		=0.
	\]
	Hence $(f',g')\in\mathcal{E}_{c_i,d_j}$, as claimed.
\end{proof}

The construction in the proof has a geometric interpretation worth
highlighting.
The perturbation $f''$ moves $f$ by $\lambda$ in the
$c_i$-coordinate direction, which generically leaves the nucleus.
The closure $f' = M_*M^*f''$ projects back onto the nucleus, and
the resulting point lies on the event locus $\mathcal{E}_{c_i,d_j}$
at projective distance exactly $\lambda$ from $(f,g)$.
Expressing $f'$ directly in terms of the gap matrix:
\[
	f'(c)
	=f(c)+\min_{d\in\cat D}\left(\delta(c,d)
	+\max\paren{\lambda-\delta(c_i,d),0}\right).
\]
This formula, together with the analogous expression for $g'$,
determines the figures in the examples below.  

Note that the closure $M_*M^*$ is nonexpansive, so it can only
decrease the distance from a nucleus point:
$d([f],[M_*M^*f''])\le d([f],[f''])$ for any presheaf $f''$.
In general this inequality is strict.
The key feature of the construction is that the perturbation
$f''=f+\lambda\mathbf{1}_{c_i}$ is calibrated so that no distance
is lost: the extensivity $f''\le M_*M^*f''$ pins $f'(c_i)$ at
$f(c_i)+\lambda$ from below, while the Isbell equation pins it
from above, forcing $d([f],[f'])=\lambda$.

Lemma~\ref{lem:events} gives the upper bound: there exists a point
on $\mathcal{E}_{c,d}$ at distance exactly $\lambda$.
The theorem shows this is optimal.
For any subset $S\subseteq\pnuc(M)$, write
$d_{\pnuc}((f,g),S)
=\inf_{(f',g')\in S}d_{\pnuc}((f,g),(f',g'))$.

\begin{theorem}[The Events Theorem]\label{thm:events}
	Let $(f,g)\in\pnuc(M)$ and let $(c,d)\in\cat C\times\cat D$.
	Then
	\[
		d_{\pnuc}\paren{(f,g),\mathcal{E}_{c,d}}=\delta^{(f,g)}(c,d).
	\]
\end{theorem}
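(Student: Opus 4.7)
The plan is to prove the Events Theorem by splitting into the two inequalities and using Lemma~\ref{lem:events} for one direction while using the witness structure of $(f,g)$ for the other. Write $(\bar c,\bar d)=(c,d)$ and $\Delta:=\delta^{(f,g)}(\bar c,\bar d)$. The case $\Delta=0$ is trivial since then $(f,g)\in\mathcal E_{\bar c,\bar d}$, so assume $\Delta>0$. The upper bound $d_{\pnuc}((f,g),\mathcal E_{\bar c,\bar d})\le\Delta$ is immediate from Lemma~\ref{lem:events}: with $\lambda=\Delta$, that lemma exhibits a point $(f',g')\in\mathcal E_{\bar c,\bar d}$ at projective distance exactly $\Delta$ from $(f,g)$.

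The content of the theorem is the matching lower bound. First I would fix finite-valued representatives of both projective classes (available because $M$ has finite entries, so $\|M\|<\infty$ and $\pnuc(M)$ is compact with real-valued representatives by Corollary~\ref{cor:compact}). Take an arbitrary $(f',g')\in\mathcal E_{\bar c,\bar d}$ and introduce the differences
\[
\alpha(c'):=f'(c')-f(c'),\qquad \beta(d'):=g'(d')-g(d').
\]
A direct computation from Definition~\ref{def:gap} gives the identity
\[
\delta^{(f',g')}(c',d')=\delta^{(f,g)}(c',d')-\alpha(c')-\beta(d')
\]
for all $(c',d')$. Nonnegativity of $\delta^{(f',g')}$ (Proposition~\ref{prop:gap}(a)) therefore yields the system of linear inequalities $\alpha(c')+\beta(d')\le\delta^{(f,g)}(c',d')$, while the hypothesis $(f',g')\in\mathcal E_{\bar c,\bar d}$ forces equality at $(\bar c,\bar d)$: $\alpha(\bar c)+\beta(\bar d)=\Delta$.

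The final step is the oscillation estimate, which is where both nucleus hypotheses enter. Since $(f,g)$ is itself a nucleus point with finite-valued representatives, Proposition~\ref{prop:gap}(c) guarantees that every column of $\delta^{(f,g)}$ contains a zero; pick $c^*\in\cat C$ with $\delta^{(f,g)}(c^*,\bar d)=0$. The constraint at this pair reads $\alpha(c^*)+\beta(\bar d)\le 0$, so $\alpha(c^*)\le-\beta(\bar d)$ and hence
\[
\alpha(\bar c)-\alpha(c^*)\;\ge\;\alpha(\bar c)+\beta(\bar d)\;=\;\Delta.
\]
This bounds the oscillation of $\alpha$, which by Proposition~\ref{prop:projective-metric} equals $d_{\cat C}([f],[f'])$, so $d_{\pnuc}((f,g),(f',g'))\ge d_{\cat C}([f],[f'])\ge\Delta$. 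Taking the infimum over $(f',g')\in\mathcal E_{\bar c,\bar d}$ completes the lower bound.

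The main obstacle, and the only nontrivial conceptual point, is recognizing that the right way to convert the pointwise equality $\alpha(\bar c)+\beta(\bar d)=\Delta$ into an oscillation estimate on $\alpha$ alone is to invoke a zero of $\delta^{(f,g)}$ in the $\bar d$-column (or, symmetrically, in the $\bar c$-row, which would instead give the estimate on $\beta$ and hence on $d_{\cat D}$). Everything else is sign bookkeeping; no compactness, continuity, or polyhedral input is needed beyond the availability of finite representatives, which is guaranteed by the standing hypotheses of Section~\ref{subsec:polyhedral_pnuc}.
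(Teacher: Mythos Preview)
Your proof is correct. Both arguments share the same skeleton---the trivial zero case, the upper bound via Lemma~\ref{lem:events}, and a lower bound obtained by analyzing an arbitrary $(f',g')\in\mathcal E_{\bar c,\bar d}$---but the lower-bound mechanisms differ. The paper sets $a=f'-f$, computes directly from $g'=M^*f'$ the sandwich $g'(d')-g(d')\in[-\max a,-\min a]$, and concludes that $(f'+g')-(f+g)$ has absolute value at most $\operatorname{osc}(a)=d_{\cat C}([f],[f'])$; evaluating at $(\bar c,\bar d)$ gives $\Delta\le d_{\cat C}$. Your route is more structural: you write the gap identity $\delta^{(f',g')}=\delta^{(f,g)}-\alpha-\beta$, invoke nonnegativity to get the linear system $\alpha(c')+\beta(d')\le\delta^{(f,g)}(c',d')$, and then use the column zero of $\delta^{(f,g)}$ at $\bar d$ (Proposition~\ref{prop:gap}(c)) to exhibit a concrete pair $(\bar c,c^*)$ with $\alpha(\bar c)-\alpha(c^*)\ge\Delta$. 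Your version makes the role of the nucleus hypothesis on $(f,g)$ explicit and pinpoints which coordinates realize the oscillation bound; the paper's sandwich is essentially a repackaging of the $1$-Lipschitz property of $M^*$ and does not single out a witnessing coordinate. Either argument transposes symmetrically (row zero instead of column zero, or bounding $d_{\cat D}$ instead of $d_{\cat C}$), as you note.
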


\begin{proof}
	If $\delta^{(f,g)}(c,d)=0$, then $(f,g)\in\mathcal{E}_{c,d}$ and both sides are $0$.
	If $M(c,d)=+\infty$, then $\delta^{(f,g)}(c,d)=+\infty$ at every nucleus point
	and $\mathcal{E}_{c,d}=\varnothing$, so both sides are $+\infty$.
	Assume $\lambda=\delta^{(f,g)}(c,d)\in(0,\infty)$.
	By Lemma~\ref{lem:events} there exists $(f',g')\in\mathcal{E}_{c,d}$ with
	$d_{\pnuc}\paren{(f,g),(f',g')}=\lambda$, so
	$d_{\pnuc}\paren{(f,g),\mathcal{E}_{c,d}}\le\lambda$.

	For the reverse inequality, work in the gauge slice $\Nuc(M)_0$ and fix $(f_1,g_1)\in\mathcal{E}_{c,d}$.
	Set $a(c)=f_1(c)-f(c)$ and let
	\[
		\alpha=\min_{c\in\cat C}a(c),\qquad \beta=\max_{c\in\cat C}a(c).
	\]
	By definition of the projective metric on $\pcat C$ one has
	\[
		d_{\cat C}\paren{[f],[f_1]}=\beta-\alpha.
	\]
	Since $g=M^*f$ and $g_1=M^*f_1$, for each $d'\in\cat D$,
	\[
		g_1(d')
		=\min_{c\in\cat C}\paren{M(c,d')-f_1(c)}
		=\min_{c\in\cat C}\paren{M(c,d')-f(c)-a(c)}.
	\]
	Therefore
	\[
		g(d')-\beta\le g_1(d')\le g(d')-\alpha,
	\]
	so $g_1(d')-g(d')\in[-\beta,-\alpha]$.
	It follows that for all $c'\in\cat C$ and $d'\in\cat D$,
	\[
		\paren{f_1(c')+g_1(d')}-\paren{f(c')+g(d')}
		=a(c')+\paren{g_1(d')-g(d')}
		\in[\alpha-\beta,\beta-\alpha],
	\]
	hence
	\[
		\left|\paren{f_1(c')+g_1(d')}-\paren{f(c')+g(d')}\right|\le \beta-\alpha.
	\]
	Evaluating at the distinguished pair $(c,d)$ and using $f_1(c)+g_1(d)=M(c,d)$ gives
	\[
		\lambda
		=M(c,d)-f(c)-g(d)
		=\paren{f_1(c)+g_1(d)}-\paren{f(c)+g(d)}
		\le \beta-\alpha.
	\]
	Finally, by definition of $d_{\pnuc}$ one has
	\[
		d_{\cat C}\paren{[f],[f_1]}\le d_{\pnuc}\paren{(f,g),(f_1,g_1)},
	\]
	so $\lambda\le d_{\pnuc}\paren{(f,g),(f_1,g_1)}$ for every $(f_1,g_1)\in\mathcal{E}_{c,d}$.
	Taking the infimum over $\mathcal{E}_{c,d}$ yields
	$d_{\pnuc}\paren{(f,g),\mathcal{E}_{c,d}}\ge\lambda$, hence equality.
\end{proof}

\begin{corollary}\label{cor:first-event-radius}
	Let $(f,g)\in\pnuc(M)$ and write $Z=Z(f,g)$.
	Then
	\[
		d_{\pnuc}\paren{(f,g),\bigcup_{(c,d)\notin Z}\mathcal E_{c,d}}
		=
		\min\set{\delta^{(f,g)}(c,d)\mid (c,d)\notin Z}.
	\]
	In particular, the smallest positive gap is the first radius at which an additional witness can appear.
\end{corollary}

\begin{proof}
	Because $\cat C\times\cat D$ is finite, the distance from $(f,g)$ to the union of the event loci is the minimum of the distances to the individual event loci.
	Apply Theorem~\ref{thm:events} to each pair $(c,d)\notin Z$.
\end{proof}

We illustrate the perturb-and-project construction on the
$3\times 4$ example from \S\ref{sec:witness_cells}.
Let $f=(0,0,0)$ and $g=M^*f=(0.7,-1.6,0.1,-2.9)$, viewed in the gauge slice $\Nuc(M)_0$.
The gap matrix is
\[
	\delta^{(f,g)}=
	\begin{bmatrix}
		0   & 3.1 & 1.6 & 1.6 \\
		0.5 & 4.2 & 0   & 5.1 \\
		1.3 & 0   & 1.9 & 0
	\end{bmatrix}.
\]

Figure~\ref{fig:cellular_event_thm} isolates the event with
\[
	\lambda=1.9=\delta^{(f,g)}(c_2,d_3).
\]
The shaded hexagon is the radius-$\lambda$ projective ball about $f$ in $\pcat C\cong\R^2$.
Define $f''\colon\cat C\to\R$ by $f''(c)=f(c)$ for $c\ne c_2$ and $f''(c_2)=f(c_2)+\lambda$.
This presheaf need not be a fixed point of $M_*M^*$, and in fact
is not: the red point $f''$ lies on the boundary of the ball but
outside the nucleus.
Applying the closure produces $f'=M_*M^*f''$, shown in blue, which lies on the event locus $\mathcal E_{c_2,d_3}$ and remains at projective distance $\lambda$ from $f$.
In this example one finds $f'=(0.6,0,1.9)$, which is projectively equivalent to $(0,-0.6,1.3)$ in the gauge $f(c_0)=0$.

\begin{figure}[t]
	\centering
	\includegraphics[width=0.74\linewidth]{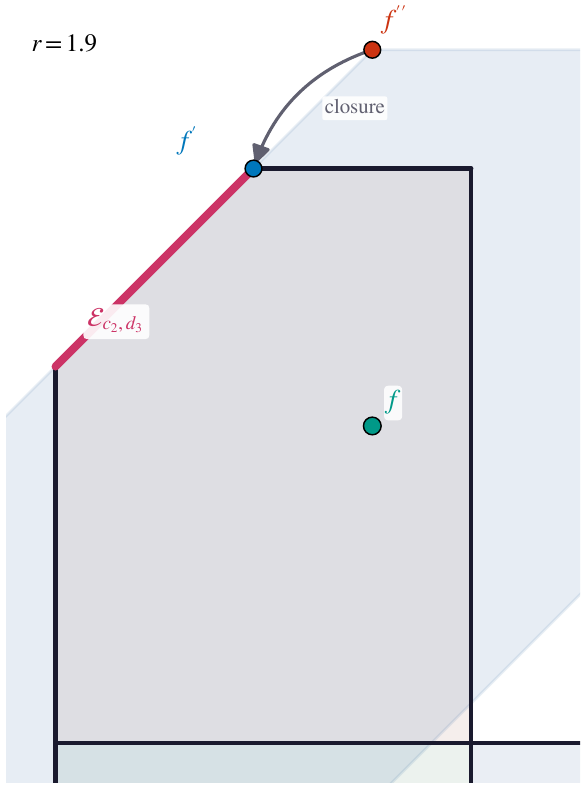}
	\caption{The event with radius $1.9=\delta^{(f,g)}(c_2,d_3)$.
		The red point $f''$ is obtained by moving a distance $1.9$ in the $c_2$-direction; it lies on the radius-$1.9$ projective ball but outside the nucleus.
		Its closure $f'=M_*M^*f''$ lies on the event locus $\mathcal E_{c_2,d_3}$, showing that this locus is first met at radius $1.9$.}
	\label{fig:cellular_event_thm}
\end{figure}
\subsection{Order chambers}\label{subsec:order_chambers}

A witness cell is determined by the zero pattern
$Z=\delta^{-1}(0)$ of the gap matrix.
Theorem~\ref{thm:events} shows that the remaining entries carry
metric information: for each $(c,d)\notin Z$ the gap value
$\delta(c,d)$ is the distance from $(f,g)$ to the event locus
$\mathcal{E}_{c,d}$.
Keeping track only of the relative order of the positive gaps
refines the witness decomposition.

\begin{definition}\label{def:order-chamber}
	Fix $(f,g)\in\pnuc(M)$ and write $\delta=\delta^{(f,g)}$.
	The \emph{gap preorder} $\preceq_{f,g}$ on
	$\cat C\times\cat D$ is the total preorder defined by
	\[
		(c,d)\preceq_{f,g}(c',d')
		\iff \delta(c,d)\le \delta(c',d').
	\]
	Two nucleus points in the same witness cell lie in the same
	\emph{order chamber} if they induce the same gap preorder.
\end{definition}

To describe a chamber as a polyhedron, let $\preceq$ be a total
preorder on $\cat C\times\cat D$ and let
$Y\subseteq\cat C\times\cat D$ be its set of minimal elements.
Assume that $Y$ covers $\cat C$ and $\cat D$, so that
$\Cell(Y)\subseteq\Nuc(M)_0$.
The closure of the corresponding order chamber is the subset of
$\Cell(Y)$ cut out by the weak inequalities
\[
	(c,d)\preceq (c',d')
	\Rightarrow\delta^{(f,g)}(c,d)\le \delta^{(f,g)}(c',d').
\]
The order chamber itself is the relative interior, obtained by
requiring strict inequality between distinct equivalence classes.
These conditions are gauge-invariant by
Lemma~\ref{lem:gap-invariances}.

\begin{proposition}\label{prop:order-chambers}
	The order chambers subdivide each witness cell into a finite
	polyhedral complex, cut out by a hyperplane arrangement in the
	gap values.
	A codimension-one wall corresponds to a tie
	$\delta(c,d)=\delta(c',d')$ between two gap values from
	distinct equivalence classes of $\preceq$; crossing the wall
	swaps their order and leaves the rest of the preorder unchanged.
	In particular, the adjacency graph of order chambers is
	bipartite~\cite{aguiarMahajan2017hyperplaneArrangements}.
\end{proposition}

\begin{figure}[t]
	\centering
	\includegraphics[width=0.90\linewidth]{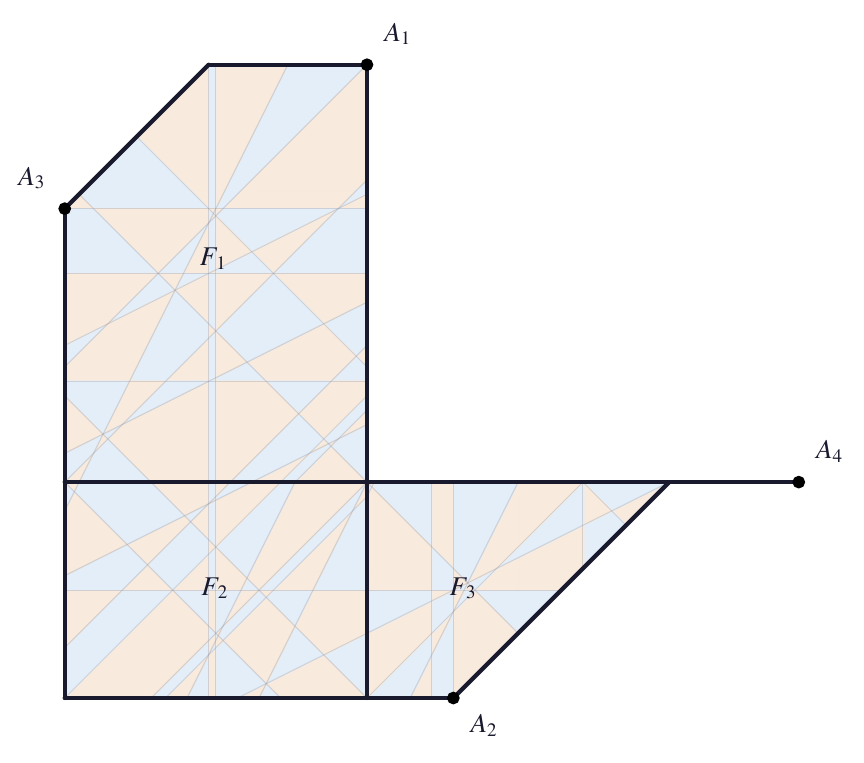}
	\caption{The global order-chamber refinement of the
		witness-cell decomposition in the running example.
		Each witness cell is subdivided by comparing the positive
		gap values $\delta(c,d)$, and adjacent chambers differ by
		swapping the order of one neighboring pair of event radii.
		The two colors indicate the bipartite coloring of the
		chamber graph.}
	\label{fig:order_chambers}
\end{figure}

\subsection{Pointed thresholding and formal concept
lattices}\label{subsec:thresholding}

The nucleus of a Boolean relation is a lattice of formal concepts
in the sense of Wille~\cite{Wille1982}; the nucleus of a
real-valued matrix is the polyhedral metric space studied in the
preceding sections.
Thresholding the gap matrix connects the two: at each nucleus point
$(f,g)$ and each radius $\varepsilon>0$, the sublevel set
$\{\delta^{(f,g)}(c,d)\le\varepsilon\}$ is a Boolean relation whose
concept lattice $L_\varepsilon$ captures the discrete
Galois-closed dependencies among the incidences visible from
$(f,g)$ at resolution~$\varepsilon$.
As $\varepsilon$ increases, the relation grows and the lattice with it,
assembling a tower of concept lattices that records the order in
which the entries of $M$ are reconstructed from the witness pairs
outward.

To describe the lattice $L_\varepsilon$ explicitly, we briefly
review the Boolean case.
By Booleans, we mean the two-element monoidal poset $\{0,1\}$ with
order $0\leq 1$ and monoidal product~$\wedge$.
A Boolean-valued profunctor on discrete sets $\cat C,\cat D$
amounts to a relation $R\subseteq\cat C\times\cat D$.
Pre- and copresheaves are Boolean-valued functions and may be
identified with subsets.
The Isbell conjugates of $R$ are the pair of order-reversing maps
between power sets
\[
	\begin{aligned}
		R^*\colon \cat P(\cat C) & \to \cat P(\cat D),                                       &
		R^*(F)                   & =\set{d\in\cat D\mid (c,d)\in R \text{ for all } c\in F},   \\
		R_*\colon \cat P(\cat D) & \to \cat P(\cat C),                                       &
		R_*(G)                   & =\set{c\in\cat C\mid (c,d)\in R \text{ for all } d\in G},
	\end{aligned}
\]
and they form a Galois connection: $F\subseteq R_*(G)\iff G\subseteq R^*(F)$.
A \emph{formal concept} of $R$ is a fixed point of this
adjunction---a pair $(F,G)$ with $G=R^*(F)$ and
$F=R_*(G)$---exactly as in Definition~\ref{def:nucleus}, but
enriched over Booleans rather than~$\Rbar$.
Following~\cite{Wille1982}, $F$ is called the \emph{extent} and
$G$ the \emph{intent}.
The set of all formal concepts is a complete lattice, ordered by
inclusion of extents (equivalently, reverse inclusion of intents):
\[(F,G)\leq (F',G') \iff F\subseteq F' \iff G'\subseteq G.\]
Meets and joins are computed by intersecting extents and intents
respectively, then re-closing:
\[
	\begin{aligned}
		\bigwedge_i(F_i,G_i) & =\paren{\bigcap_i F_i,R^*\paren{\bigcap_i F_i}},  &
		\bigvee_i(F_i,G_i)   & =\paren{R_*\paren{\bigcap_i G_i}, \bigcap_i G_i}.
	\end{aligned}
\]

When $R\subseteq R'\subseteq C\times D$ are nested relations,
there are two canonical ways to transport a formal concept of $R$
to one of~$R'$: re-close the extent, or re-close the intent.
\begin{proposition}\label{prop:concept-transport}
	Let $R\subseteq R'\subseteq C\times D$ be relations on sets of
	objects $C$ and attributes $D$, with extensions to power sets
	$R^*\colon \mathcal P(C)\to \mathcal P(D)$ and $R_*\colon \mathcal P(D)\to \mathcal P(C)$, and similarly for $R'$.
	Write $L=\Nuc(R)$ and $L'=\Nuc(R')$ for their concept lattices.  The maps
	\[T^{\mathrm{ext}}_{R\to R'}, T^{\mathrm{int}}_{R\to R'}:L \to L'\] defined by
	\begin{align*}
		T^{\mathrm{ext}}_{R\to R'}(F,G)
		 & :=\left(R'_*(R'^*F),\,R'^*F\right),
		\\
		T^{\mathrm{int}}_{R\to R'}(F,G)
		 & :=\left(R'_*G,\,R'^*(R'_*G)\right).
	\end{align*}
	are monotone.  Moreover:
	\begin{enumerate}[label=(\alph*)]
		\item $T^{\mathrm{ext}}_{R\to R'}(F,G)$ is the \emph{least} concept of $L'$ whose extent contains $F$.
		\item $T^{\mathrm{int}}_{R\to R'}(F,G)$ is the \emph{greatest} concept of $L'$ whose intent contains $G$.
		\item One has the inequality
		      $T^{\mathrm{ext}}_{R\to R'}(F,G)\le T^{\mathrm{int}}_{R\to R'}(F,G)$
		      in~$L'$.
	\end{enumerate}
\end{proposition}

\begin{proof}
	Both pairs are concepts of $R'$ by construction, and monotonicity follows from the monotonicity of the closure operators $R'_*R'^*$ on extents and $R'^*R'_*$ on intents.

	For \textup{(a)}, extensivity of $R'_*R'^*$ gives
	$F\subseteq R'_*(R'^*F)$.
	Now let $(F',G')\in L'$ with $F\subseteq F'$.  Since $R'^*$ is antitone,
	$G'=R'^*F'\subseteq R'^*F$,
	and applying the antitone map $R'_*$ gives
	$R'_*(R'^*F)\subseteq R'_*(G')=F'$.
	Thus $T^{\mathrm{ext}}_{R\to R'}(F,G)$ is the least concept of $L'$ whose extent contains $F$.

	For \textup{(b)}, extensivity of $R'^*R'_*$ gives
	$G\subseteq R'^*(R'_*G)$.
	If $(F',G')\in L'$ and $G\subseteq G'$, then antitonicity of $R'_*$ yields
	$F'=R'_*(G')\subseteq R'_*(G)$,
	so $(F',G')\le T^{\mathrm{int}}_{R\to R'}(F,G)$.

	For \textup{(c)}, since $(F,G)\in L$ one has $G=R^*F$.
	Because $R\subseteq R'$, every attribute related by $R$ to every element of $F$ is also related by $R'$, so
	$G=R^*F\subseteq R'^*F$.
	Applying $R'_*$ gives
	$R'_*(R'^*F)\subseteq R'_*(G)$,
	which is the extent inequality
	$T^{\mathrm{ext}}_{R\to R'}(F,G)\le T^{\mathrm{int}}_{R\to R'}(F,G)$.
\end{proof}

\subsection{Chamberwise lattice towers}\label{subsec:chamberwise}

Return to the $\Rbar$-profunctor $M$ on finite discrete sets
$\cat C$ and $\cat D$, and fix a nucleus point
$(f,g)\in\Nuc(M)$.
For $\varepsilon\ge 0$, the sublevel set of the gap matrix
defines a Boolean relation.

\begin{definition}\label{def:lattice-tower}
	For $\varepsilon\ge 0$, define a Boolean profunctor
	$R^{(f,g)}_\varepsilon\colon
	\cat C^{\op}\otimes\cat D\to\set{0,1}$ by
	\[
		R^{(f,g)}_\varepsilon(c,d)=
		\begin{cases}
			1 & \text{if }\delta^{(f,g)}(c,d)\le \varepsilon, \\
			0 & \text{if }\delta^{(f,g)}(c,d)>\varepsilon.
		\end{cases}
	\]
	Let $L_\varepsilon(f,g):=\Nuc(R^{(f,g)}_\varepsilon)$ denote
	its concept lattice.
\end{definition}

An element of $L_\varepsilon(f,g)$ is a pair $(F,G)$ with
$F=(R_\varepsilon)_*G$ and $G=(R_\varepsilon)^*F$.
The Events Theorem gives these threshold relations a direct
metric interpretation.

\begin{proposition}\label{prop:threshold-distance}
	For every $(f,g)\in\Nuc(M)$, every $\varepsilon\ge 0$, and
	every $(c,d)\in\cat C\times\cat D$,
	\[
		(c,d)\in R^{(f,g)}_\varepsilon
		\iff
		\delta^{(f,g)}(c,d)\le \varepsilon
		\iff
		d_{\pnuc}\paren{(f,g),\mathcal E_{c,d}}\le \varepsilon.
	\]
	Thus the threshold relation records exactly which event loci
	lie within projective distance~$\varepsilon$ of~$(f,g)$.
\end{proposition}

\begin{proof}
	The first equivalence is the definition of
	$R^{(f,g)}_\varepsilon$, and the second is
	Theorem~\ref{thm:events}.
\end{proof}

For brevity, write $R_\varepsilon:=R^{(f,g)}_\varepsilon$ and
$L_\varepsilon:=L_\varepsilon(f,g)$.
If $\varepsilon\le\varepsilon'$ then
$R_\varepsilon\subseteq R_{\varepsilon'}$, and
Proposition~\ref{prop:concept-transport} provides two monotone
transport maps
\[
	T^{\mathrm{ext}}_{\varepsilon,\varepsilon'}
	\;\le\;
	T^{\mathrm{int}}_{\varepsilon,\varepsilon'}
	\colon L_\varepsilon\to L_{\varepsilon'},
\]
the first preserving extents minimally, the second preserving
intents maximally.

For a fixed numerical value of $\varepsilon$, the relation
$R^{(f,g)}_\varepsilon$ need not be locally constant as $(f,g)$
varies in an order chamber, since the gap values
$\delta^{(f,g)}(c,d)$ move continuously.
What \emph{is} locally constant on an order chamber is the
\emph{order} in which incidences enter as $\varepsilon$
increases.
It is therefore natural to reindex the construction by the
equivalence classes of the chamber preorder.

Fix an order chamber $Q$ with total preorder $\preceq_Q$ on
$\cat C\times\cat D$, and let
\[
	E_0\prec_Q E_1\prec_Q\cdots\prec_Q E_m
\]
be the equivalence classes, ordered from smallest to largest.
For $0\le k\le m$ define
\[
	R^Q_k:=\bigcup_{i=0}^k E_i\subseteq \cat C\times\cat D,
\]
giving a finite chain of relations
$R^Q_0\subseteq R^Q_1\subseteq\cdots\subseteq R^Q_m$.
Setting $L^Q_k:=\Nuc(R^Q_k)$, each inclusion
$R^Q_k\subseteq R^Q_{k'}$ carries a pair of monotone transport
maps
$T^{\mathrm{ext}}_{k,k'}\le T^{\mathrm{int}}_{k,k'}\colon
L^Q_k\to L^Q_{k'}$
from Proposition~\ref{prop:concept-transport}, giving a finite
tower of concept lattices
\[
	L^Q_0\to L^Q_1\to\cdots\to L^Q_m
\]
with canonical lower and upper structure maps at each step.

\begin{proposition}\label{prop:chamberwise-tower}
	Let $Q$ be an order chamber with preorder classes
	$E_0\prec_Q\cdots\prec_Q E_m$.
	For every $(f,g)\in Q$ and every $\varepsilon\ge 0$, there is
	a unique index $k$ such that
	$R^{(f,g)}_\varepsilon=R^Q_k$.
	Consequently the real-parameter family
	$\varepsilon\mapsto L_\varepsilon(f,g)$ factors through the
	finite tower $L^Q_0\to\cdots\to L^Q_m$, and this tower
	depends only on the chamber~$Q$, not on the chosen point.
\end{proposition}

\begin{proof}
	Points of the same order chamber determine the same total
	preorder on the gap values.
	The sublevel condition
	$\delta^{(f,g)}(c,d)\le\varepsilon$ therefore selects an
	initial segment of the preorder classes, and every initial
	segment arises for a unique threshold value.
\end{proof}

\subsection{The constructible sheaf of
lattices}\label{subsec:constructible-sheaf}

The chamberwise towers assemble into a global structure over the
order-chamber complex.

\begin{proposition}\label{prop:face-specialization}
	Let $Q'$ be a face of the closure of an order chamber~$Q$.
	Then $\preceq_{Q'}$ is obtained from $\preceq_Q$ by merging
	consecutive equivalence classes, so each class
	for~$\preceq_{Q'}$ is a union of consecutive classes
	for~$\preceq_Q$.
	Consequently, the chain $\set{R^{Q'}_\ell}$ is a subsequence
	of $\set{R^Q_k}$, and the lattice tower for~$Q'$ is a
	coarsening of the tower for~$Q$: it retains the same
	concept lattices at the merged thresholds, with structure maps
	given by the direct transport for the corresponding
	(larger) inclusion.
\end{proposition}

\begin{proof}
	Passing from $Q$ to a face imposes equalities among
	neighboring gap values while preserving their order relative
	to the remaining classes.
	Hence only consecutive preorder blocks merge, and the
	relation chain and lattice tower coarsen accordingly.
\end{proof}

Propositions~\ref{prop:chamberwise-tower}
and~\ref{prop:face-specialization} say that the assignment
\[
	Q\;\longmapsto\;
	\paren{L^Q_0\to L^Q_1\to\cdots\to L^Q_m}
\]
is a \emph{constructible sheaf of lattice towers} over the
order-chamber complex: each open chamber receives a finite tower
of concept lattices, and specialization to a face coarsens the
tower by merging consecutive floors.
The two transport maps
$T^{\mathrm{ext}}\le T^{\mathrm{int}}$ furnish canonical lower
and upper structure maps at each step of each tower; both are
preserved by the coarsening.

\begin{figure}[t]
	\centering
	\[
		\begin{tikzcd}[column sep=2.4em,row sep=2.2em]
			L^Q_0 \arrow[r] \arrow[d,equal] &
			L^Q_1 \arrow[r] &
			L^Q_2 \arrow[r] \arrow[d,equal] &
			L^Q_3 \arrow[r] &
			L^Q_4 \arrow[r] \arrow[d,equal] &
			L^Q_5 \arrow[d,equal] \\
			L^{Q'}_0 \arrow[rr,"T_{0,2}"] & &
			L^{Q'}_1 \arrow[rr,"T_{2,4}"] & &
			L^{Q'}_2 \arrow[r,"T_{4,5}"] &
			L^{Q'}_3
		\end{tikzcd}
	\]
	\caption{Specialization to a face.
		If $Q'\le\overline Q$ merges consecutive preorder blocks,
		the tower over~$Q'$ retains the lattices at the merged
		thresholds, with structure maps given by the direct
		transport for the coarsened inclusions.}
	\label{fig:merge-floors}
\end{figure}

We illustrate on the running example.
Let $\cat C=\{c_0,c_1,c_2\}$,
$\cat D=\{d_1,d_2,d_3,d_4\}$, and let
\[
	M=
	\begin{bmatrix}
		0.7 & 1.5  & 1.7 & -1.3 \\
		1.2 & 2.6  & 0.1 & 2.2  \\
		2.0 & -1.6 & 2.0 & -2.9
	\end{bmatrix}.
\]
Work in the gauge slice $f(c_0)=0$, so
$f=(0,x,y)\in\mathbb R^3$.
Consider three nucleus points
\[
	f_1=(0,0,0),\qquad f_2=(0,-0.1,0),\qquad f_3=(0,0.1,0),
\]
with $g_i:=M^*f_i$ and
$\delta_i(c,d):=M(c,d)-f_i(c)-g_i(d)$.
One computes
\[
	\begin{aligned}
		g_1 & =(0.7,-1.6,0.1,-2.9), \\
		g_2 & =(0.7,-1.6,0.2,-2.9), \\
		g_3 & =(0.7,-1.6,0.0,-2.9)
	\end{aligned}
\]
and gap matrices
\[
	\delta_1=
	\begin{bmatrix}
		0.0 & 3.1 & 1.6 & 1.6 \\
		0.5 & 4.2 & 0.0 & 5.1 \\
		1.3 & 0.0 & 1.9 & 0.0
	\end{bmatrix},\qquad
	\delta_2=
	\begin{bmatrix}
		0.0 & 3.1 & 1.5 & 1.6 \\
		0.6 & 4.3 & 0.0 & 5.2 \\
		1.3 & 0.0 & 1.8 & 0.0
	\end{bmatrix},\qquad
	\delta_3=
	\begin{bmatrix}
		0.0 & 3.1 & 1.7 & 1.6 \\
		0.4 & 4.1 & 0.0 & 5.0 \\
		1.3 & 0.0 & 2.0 & 0.0
	\end{bmatrix}.
\]
In particular,
\[
	\delta_2(c_0,d_3)<\delta_2(c_0,d_4),\qquad
	\delta_1(c_0,d_3)=\delta_1(c_0,d_4),\qquad
	\delta_3(c_0,d_4)<\delta_3(c_0,d_3),
\]
so $f_2$ and $f_3$ lie in adjacent order chambers $Q_2$ and
$Q_3$, separated by the wall
$Q_1=\overline{Q_2}\cap\overline{Q_3}$ through~$f_1$.
Figure~\ref{fig:order_chambers_zoom} shows these three points.

\begin{figure}[t]
	\centering
	\includegraphics[width=0.84\linewidth]{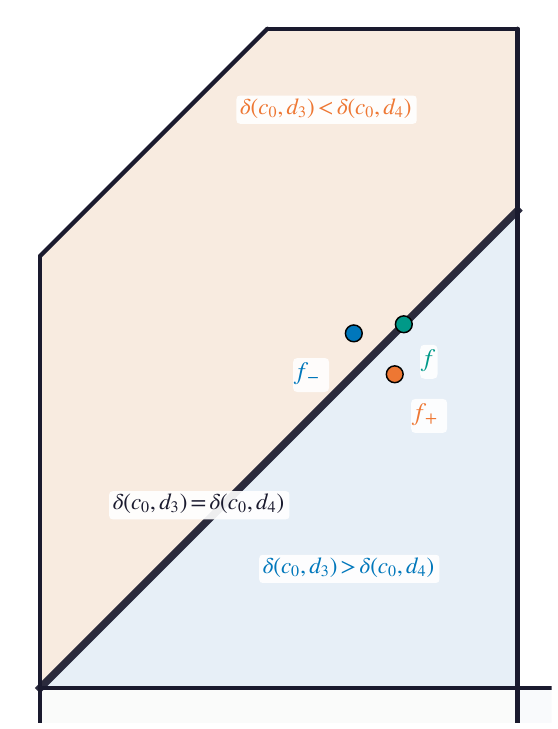}
	\caption{Local wall crossing at the $1.6$ tie.
		Along the diagonal wall one has
		$\delta(c_0,d_3)=\delta(c_0,d_4)$.
		The points $f_2$ and $f_3$ lie in the adjacent chambers
		where these two event radii are ordered oppositely, while
		$f_1$ lies on the wall.}
	\label{fig:order_chambers_zoom}
\end{figure}

Let $R^{(f_i,g_i)}_\varepsilon:=\{(c,d)\mid
\delta_i(c,d)\le\varepsilon\}$ be the threshold relations and
write $L(R):=\Nuc(R)$ for the concept lattice of a relation
$R\subseteq\cat C\times\cat D$.
Define the relations
\[
	\begin{aligned}
		R_0    & :=\{(c_0,d_1),(c_1,d_3),(c_2,d_2),(c_2,d_4)\}, \\
		R_1    & :=R_0\cup\{(c_1,d_1)\},\qquad
		R_2:=R_1\cup\{(c_2,d_1)\},                              \\
		R_{3a} & :=R_2\cup\{(c_0,d_3)\},\qquad
		R_{3b}:=R_2\cup\{(c_0,d_4)\},                           \\
		R_4    & :=R_2\cup\{(c_0,d_3),(c_0,d_4)\},              \\
		R_5    & :=R_4\cup\{(c_2,d_3)\},\qquad
		R_6:=R_5\cup\{(c_0,d_2)\},                              \\
		R_7    & :=R_6\cup\{(c_1,d_2)\},\qquad
		R_8:=R_7\cup\{(c_1,d_4)\}.
	\end{aligned}
\]
The chamberwise towers are:
\begin{align*}
	\text{at }f_2\in Q_2: &  &
	R_0\subset R_1\subset R_2\subset R_{3a}\subset R_4\subset R_5\subset R_6\subset R_7\subset R_8 \\
	\text{at }f_1\in Q_1: &  &
	R_0\subset R_1\subset R_2\subset R_4\subset R_5\subset R_6\subset R_7\subset R_8               \\
	\text{at }f_3\in Q_3: &  &
	R_0\subset R_1\subset R_2\subset R_{3b}\subset R_4\subset R_5\subset R_6\subset R_7\subset R_8.
\end{align*}
The only combinatorial difference between the two chambers is the
order in which $(c_0,d_3)$ and $(c_0,d_4)$ enter; on the wall
they enter simultaneously.

The corresponding concept lattices are:
\[
	\begin{aligned}
		L(R_2)=\{ &
		(\varnothing\mid \{d_1,d_2,d_3,d_4\}),\
		(\{c_1\}\mid \{d_1,d_3\}),                  \\
		          & (\{c_2\}\mid \{d_1,d_2,d_4\}),\
		(\{c_0,c_1,c_2\}\mid \{d_1\})\},
	\end{aligned}
\]
\[
	\begin{aligned}
		L(R_{3a})=\{ &
		(\varnothing\mid \{d_1,d_2,d_3,d_4\}),\
		(\{c_2\}\mid \{d_1,d_2,d_4\}),                 \\
		             & (\{c_0,c_1\}\mid \{d_1,d_3\}),\
		(\{c_0,c_1,c_2\}\mid \{d_1\})\},
	\end{aligned}
\]
\[
	\begin{aligned}
		L(R_{3b})=\{ &
		(\varnothing\mid \{d_1,d_2,d_3,d_4\}),\
		(\{c_1\}\mid \{d_1,d_3\}),\
		(\{c_2\}\mid \{d_1,d_2,d_4\}),                 \\
		             & (\{c_0,c_2\}\mid \{d_1,d_4\}),\
		(\{c_0,c_1,c_2\}\mid \{d_1\})\},
	\end{aligned}
\]
\[
	\begin{aligned}
		L(R_4)=\{ &
		(\varnothing\mid \{d_1,d_2,d_3,d_4\}),\
		(\{c_0\}\mid \{d_1,d_3,d_4\}),\
		(\{c_2\}\mid \{d_1,d_2,d_4\}),              \\
		          & (\{c_0,c_1\}\mid \{d_1,d_3\}),\
		(\{c_0,c_2\}\mid \{d_1,d_4\}),\
		(\{c_0,c_1,c_2\}\mid \{d_1\})\}.
	\end{aligned}
\]
For $k\ge 4$ the lattices are the same in all three towers:
$L(R_5)$ is a $3$-element chain, $L(R_6)$ and $L(R_7)$ are
$2$-element chains, and $L(R_8)$ is the one-point lattice.

In this example, the direct transport map
$T_{2,4}\colon L(R_2)\to L(R_4)$ agrees with both composites
$T_{3a,4}\circ T_{2,3a}$ and $T_{3b,4}\circ T_{2,3b}$, for
either choice of $T^{\mathrm{ext}}$ or $T^{\mathrm{int}}$.
Figure~\ref{fig:wall-specialization-diamond} displays this as a
commutative diamond.

\begin{figure}[t]
	\centering
	\[
		\begin{tikzcd}[row sep=2.7em,column sep=3.3em]
			& L(R_{3a}) \arrow[dr,"T_{3a,4}"] & \\
			L(R_2) \arrow[ur,"T_{2,3a}"]
			\arrow[rr,"T_{2,4}" description]
			\arrow[dr,"T_{2,3b}"'] & & L(R_4) \\
			& L(R_{3b}) \arrow[ur,"T_{3b,4}"'] &
		\end{tikzcd}
	\]
	\caption{Wall specialization at the $1.6$ tie.
		The adjacent chambers $Q_2$ and $Q_3$ correspond to the
		two strict refinements of the wall preorder:
		in $Q_2$ the incidence $(c_0,d_3)$ enters before
		$(c_0,d_4)$ (intermediate relation $R_{3a}$), while in
		$Q_3$ the order is reversed ($R_{3b}$).
		On the wall, the tie merges these floors, yielding the
		direct inclusion $R_2\subset R_4$.
		In this example the diamond commutes for both
		$T^{\mathrm{ext}}$ and $T^{\mathrm{int}}$.}
	\label{fig:wall-specialization-diamond}
\end{figure}

\begin{figure}[t]
	\centering
	\includegraphics[width=0.98\linewidth]{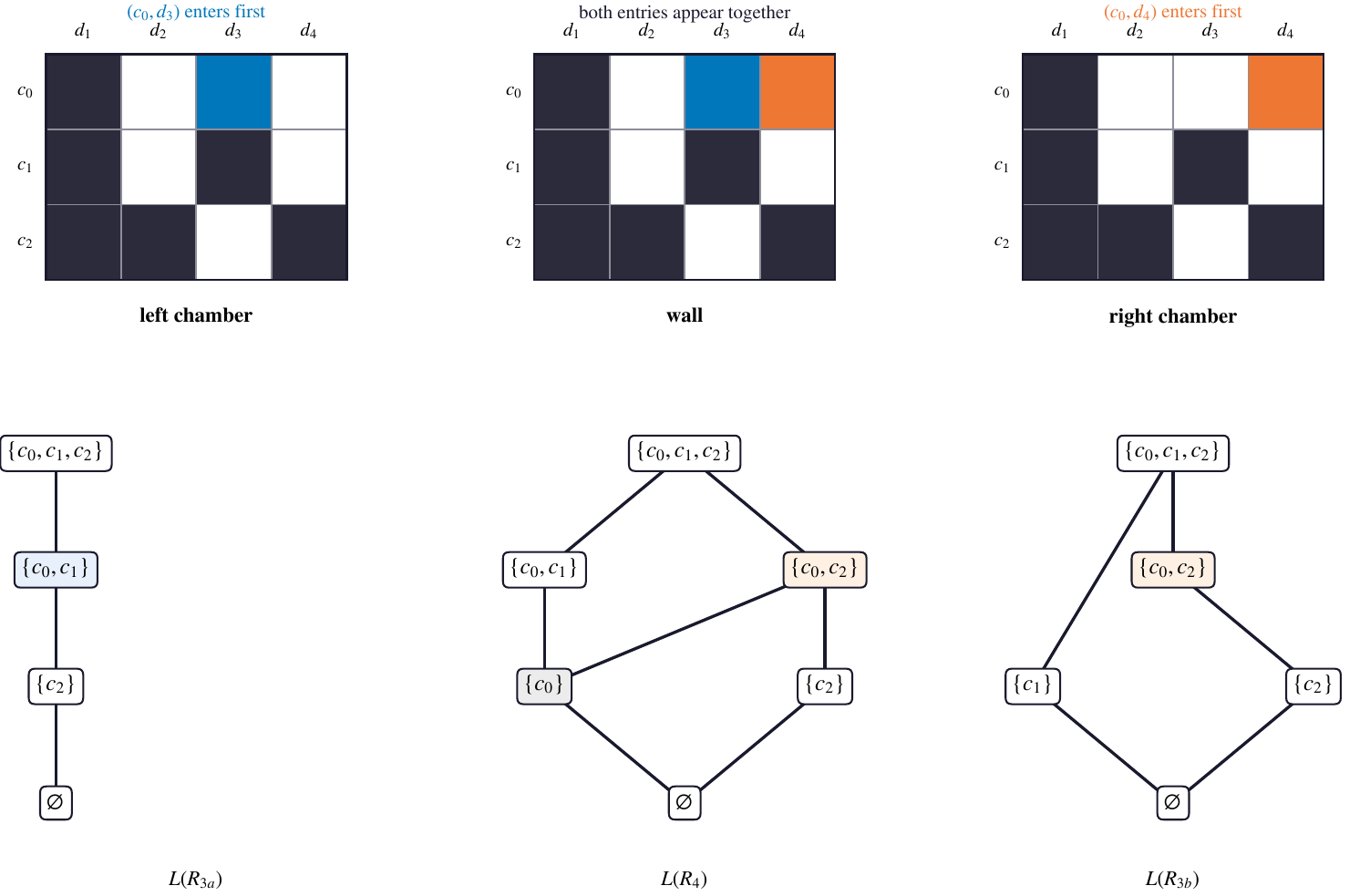}
	\caption{Thresholded relations and Boolean nuclei across
		the $1.6$ tie.
		In the left chamber the new incidence $(c_0,d_3)$ enters
		first, giving $R_{3a}$; on the wall both incidences
		appear together, giving $R_4$; in the right chamber
		$(c_0,d_4)$ enters first, giving $R_{3b}$.
		The lattices below show the corresponding change in the
		Boolean nuclei.}
	\label{fig:threshold-triptych}
\end{figure}

\section{The square case: optimal assignments and Chebyshev
centering}\label{sec:centering}

When $\cat C=\cat D$ and $|\cat C|=n$, the projective nucleus
of a real $n\times n$ matrix $M$ acquires canonical structure
that has no analogue in the rectangular case:
a distinguished scalar $\operatorname{val}(M)$ (the tropical
determinant), at most one full-dimensional cell (and generically
exactly one), and within that cell a canonical center point
whose inscribed radius is a computable invariant of~$M$.
This section develops these invariants:
\S\ref{subsec:square-case}--\S\ref{subsec:admissible-permutations}
identify the full-dimensional cell via the tropical determinant,
and
\S\ref{subsec:chebyshev-lp}--\S\ref{subsec:centering-proofs}
solve the Chebyshev centering problem, expressing the optimal
radius as a minimum directed cycle mean.

\subsection{Full-dimensional cells are permutation
cells}\label{subsec:square-case}

The projective space $\pcat C$ has dimension $n-1$, so every
witness polyhedron $\Cell(Y)$ has dimension at most $n-1$.
We call $\Cell(Y)$ \emph{full-dimensional} if it achieves this
bound.
Any subset $Y\subseteq\cat C\times\cat C$ that covers both
factors has $|Y|\ge n$, with equality if and only if $Y$ is the
graph of a permutation $\sigma\in S_n$:
\[
	\Gamma_\sigma:=\set{(c,\sigma(c))\mid c\in\cat C}.
\]

\begin{proposition}\label{prop:topcell-permutation}
	If $\Cell(Y)$ is full-dimensional, then $Y=\Gamma_\sigma$
	for a permutation $\sigma\in S_n$.
\end{proposition}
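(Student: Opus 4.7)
The plan is to read $\Cell(Y)$ as a polyhedron whose affine hull lies inside the linear variety cut out by the gauge $f(c_0)=0$ together with the equalities $f(c)+g(d)=M(c,d)$ for $(c,d)\in Y$, and then to compute the codimension of that variety purely combinatorially from the bipartite graph with edge set $Y$.

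The ambient space $\R^\cat{C}\times\R^\cat{D}$ has dimension $2n$, so if $r$ is the rank of the combined linear system then
\[
\dim\Cell(Y)\le 2n-r.
\]
To evaluate $r$, view $Y$ as the edge set of a bipartite graph $G_Y$ on $\cat C\sqcup\cat D$. The kernel of the family of functionals $(f,g)\mapsto f(c)+g(d)$ for $(c,d)\in Y$ is spanned, on each connected component $K$ of $G_Y$, by the vector that assigns some $\lambda_K\in\R$ on $K\cap\cat C$ and $-\lambda_K$ on $K\cap\cat D$, together with one free coordinate per isolated vertex. Since $Y$ is admissible, it covers both $\cat C$ and $\cat D$, so no vertex is isolated, and the kernel has dimension exactly $c(Y)$, the number of connected components of $G_Y$. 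Thus the $Y$-equations have rank $2n-c(Y)$, and adjoining $f(c_0)=0$ kills precisely the component parameter $\lambda_K$ of the component containing $c_0$, yielding $r=2n-c(Y)+1$ and the dimension bound
\[
\dim\Cell(Y)\le c(Y)-1.
\]

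To finish, observe that every connected component of $G_Y$ contains at least one edge of $Y$ (covering forbids isolated vertices), hence at least one vertex in each of $\cat C$ and $\cat D$; so $c(Y)\le\min(|\cat C|,|\cat D|)=n$. If $\Cell(Y)$ is full-dimensional, then $c(Y)-1\ge n-1$, forcing $c(Y)=n$. With $n$ components distributed over $n$ vertices on each side, each component contains exactly one $\cat C$-vertex and one $\cat D$-vertex; a connected bipartite graph on two vertices is a single edge, so $Y$ is a disjoint union of $n$ such edges, which is exactly the graph $\Gamma_\sigma$ of a unique permutation $\sigma\in S_n$. The main obstacle is the rank computation for the sum-functionals, a standard incidence-rank statement for bipartite graphs; admissibility is the hypothesis that eliminates isolated vertices and ensures the gauge constraint is nonredundant.
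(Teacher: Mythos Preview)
Your argument is correct, and in fact establishes a stronger statement than the paper uses: you prove the general dimension bound $\dim\Cell(Y)\le c(Y)-1$ for any admissible $Y$, where $c(Y)$ is the number of connected components of the bipartite graph on $\cat C\sqcup\cat D$ with edge set $Y$. The paper instead argues more directly: if two pairs $(c,d),(c',d)\in Y$ share a column, then subtracting the two equalities gives $f(c')-f(c)=M(c',d)-M(c,d)$, which is a nontrivial affine relation among the $n-1$ free $f$-coordinates in the gauge slice, forcing $\dim\Cell(Y)\le n-2$; by symmetry the same holds for shared rows, so a full-dimensional cell must have $Y$ injective in each coordinate, hence a permutation graph.

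Your route is more structural and yields the pleasant byproduct that the dimension of a witness polyhedron is bounded by a purely combinatorial quantity; the paper's route is shorter and avoids the incidence-rank computation, at the cost of not producing the general formula. Both arguments rely on admissibility only to ensure that $Y$ covers both sides (so there are no isolated vertices and the gauge constraint is independent).
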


\begin{proof}
	Suppose $Y$ contains two pairs $(c,d)$ and $(c',d)$ with
	the same second coordinate~$d$.
	The witness equalities~\eqref{eq:Cell-eq} give
	$f(c)+g(d)=M(c,d)$ and $f(c')+g(d)=M(c',d)$, hence
	$f(c')-f(c)=M(c',d)-M(c,d)$ throughout $\Cell(Y)$.
	In the gauge $f(c_0)=0$, this is a nontrivial affine
	relation among the $n-1$ free coordinates of~$f$, so
	$\Cell(Y)$ has dimension at most~$n-2$.
	Therefore each $d\in\cat C$ appears in at most one pair
	of~$Y$.
	Since $Y$ covers $\cat D=\cat C$, each $d$ appears exactly
	once, giving $|Y|\le n$.
	But $Y$ also covers $\cat C$, so $|Y|\ge n$, and $Y$ is
	the graph of a bijection.
\end{proof}

\subsection{The tropical determinant and the full-dimensional
cell}\label{subsec:admissible-permutations}

For $\sigma\in S_n$, the $M$-\emph{cost} of~$\sigma$ is
$\sum_{c\in\cat C}M(c,\sigma(c))$.
The \emph{tropical determinant} of~$M$ is the minimum cost:
\[
	\operatorname{val}(M)
	:=\min_{\sigma\in S_n}\sum_{c\in\cat C}
	M(c,\sigma(c)).
\]
The matrix $M$ is \emph{tropically nonsingular} if the minimum
is achieved by a unique permutation.
A permutation achieving $\operatorname{val}(M)$ is
\emph{optimal}; finding one is the linear assignment
problem~\cite{Kuhn1955,BurkardDellAmicoMartello2009};
see also~\cite{MaclaganSturmfels2015ITG} for the tropical
perspective.

By Proposition~\ref{prop:topcell-permutation}, every
full-dimensional cell has the form $\Cell(\Gamma_\sigma)$.
The admissible permutations turn out to be exactly the optimal
ones:

\begin{proposition}\label{prop:admissible-permutations}
	The permutation graph $\Gamma_\sigma$ is admissible if and
	only if $\sigma$ achieves $\operatorname{val}(M)$.
\end{proposition}

This is a restatement, in Isbell-duality language, of the
classical LP-duality characterization of optimal
assignments~\cite[Ch.~17]{Schrijver2003}.
We include the proof here because it 
gives an explicit construction of a nucleus point
in~$\Cell(\Gamma_\sigma)$.  This
is useful because the full-dimensional cell is difficult to reach by
naive geometric means: projecting a generic point of~$\pcat C$
onto~$\pnuc(M)$ typically lands on a low-dimensional boundary
cell rather than the full-dimensional interior.
The shortest-path construction in the proof 
reappears, in strengthened form,
in the Centering Theorem.

\begin{proposition}\label{prop:admissible-permutations}
	The permutation graph $\Gamma_\sigma$ is admissible if and
	only if $\sigma$ achieves $\operatorname{val}(M)$.
\end{proposition}

\begin{proof}
	Suppose $\Gamma_\sigma$ is admissible and choose
	$(f,g)\in\Cell(\Gamma_\sigma)$.
	Summing the witness equalities
	$f(c)+g(\sigma(c))=M(c,\sigma(c))$ over~$c$ gives
	\begin{equation}\label{eq:dual-value}
		\sum_{c}f(c)+\sum_{d}g(d)
		=\sum_{c}M(c,\sigma(c)),
	\end{equation}
	since $\sigma$ is a bijection.
	For any other permutation~$\tau$, summing the Isbell
	inequalities $f(c)+g(\tau(c))\le M(c,\tau(c))$ over~$c$
	gives
	$\sum_c f(c)+\sum_d g(d)\le\sum_c M(c,\tau(c))$.
	Comparing with~\eqref{eq:dual-value} shows $\sigma$
	achieves $\operatorname{val}(M)$.

	Now assume $\sigma$ achieves $\operatorname{val}(M)$.
	We construct $(f,g)\in\Cell(\Gamma_\sigma)$ via
	shortest-path potentials.

	\emph{Step~1: the auxiliary digraph.}\enspace
	Form the complete directed graph on $\cat C$ with edge
	weights
	\[
		w(c\to c')
		:=M(c',\sigma(c))-M(c,\sigma(c)).
	\]
	The weight $w(c\to c')$ measures the change in cost when
	column $\sigma(c)$ is reassigned from row $c$ to row~$c'$.

	\emph{Step~2: no negative cycles.}\enspace
	For a directed cycle $x_0\to x_1\to\cdots\to x_k=x_0$,
	the edge weights telescope:
	\[
		\sum_{i=0}^{k-1}w(x_i\to x_{i+1})
		=\sum_{i=0}^{k-1}\bigl[M(x_{i+1},\sigma(x_i))
		-M(x_i,\sigma(x_i))\bigr].
	\]
	If this were negative, replacing $\sigma$ on the cycle by
	$\tau(x_{i+1})=\sigma(x_i)$ would yield a permutation of
	strictly lower cost, contradicting optimality.

	\emph{Step~3: shortest-path potentials.}\enspace
	Fix a base vertex $c_0$.
	Since there are no negative cycles, shortest-path distances
	from $c_0$ are well-defined.
	Set $f(c)$ to be the shortest-path distance from $c_0$
	to~$c$, so $f(c_0)=0$ and
	$f(c')\le f(c)+w(c\to c')$ for all~$c,c'$.
	Define $g$ by
	\[
		g(\sigma(c)):=M(c,\sigma(c))-f(c).
	\]
	The witness equalities
	$f(c)+g(\sigma(c))=M(c,\sigma(c))$ hold by construction.
	For the off-diagonal Isbell inequality at
	$(c',\sigma(c))$ with $c'\neq c$:
	\[
		f(c')+g(\sigma(c))
		=f(c')+M(c,\sigma(c))-f(c)
		\le M(c',\sigma(c)),
	\]
	since $M(c',\sigma(c))-M(c,\sigma(c))=w(c\to c')
	\ge f(c')-f(c)$.
	As $\sigma$ is a bijection, every $d\in\cat C$ is
	$\sigma(c)$ for a unique~$c$, so
	$f(c')+g(d)\le M(c',d)$ for all pairs.
	Thus $(f,g)\in\Cell(\Gamma_\sigma)$.
\end{proof}

\begin{proposition}\label{prop:unique-optimal}
	If $\sigma$ and $\tau$ are distinct permutations both
	achieving $\operatorname{val}(M)$, then
	$\Cell(\Gamma_\sigma)=\Cell(\Gamma_\tau)
	=\Cell(\Gamma_\sigma\cup\Gamma_\tau)$,
	and this cell has dimension at most~$n-3$.
\end{proposition}

\begin{proof}
	Let $(f,g)\in\Cell(\Gamma_\sigma)$.
	Summing the witness equalities for~$\sigma$ gives
	$\sum_c f(c)+\sum_d g(d)=\operatorname{val}(M)$
	as in~\eqref{eq:dual-value}.
	Summing the Isbell inequalities along~$\tau$ gives
	$\sum_c f(c)+\sum_d g(d)\le\operatorname{val}(M)$,
	using that $\tau$ is a bijection achieving the same value.
	The left-hand sides are identical, so every inequality
	$f(c)+g(\tau(c))\le M(c,\tau(c))$ is an equality.
	Thus $(f,g)\in\Cell(\Gamma_\tau)$, giving
	$\Cell(\Gamma_\sigma)\subseteq\Cell(\Gamma_\tau)$;
	by symmetry the two cells coincide.
	Since $\sigma\neq\tau$ differ on at least two elements
	of~$\cat C$, we have
	$|\Gamma_\sigma\cup\Gamma_\tau|\ge n+2$, so the cell
	satisfies at least $n+2$ witness equalities and has
	dimension at most~$n-3$.
\end{proof}

Combining the three propositions: $\pnuc(M)$ has a
full-dimensional cell if and only if $M$ is tropically
nonsingular, and in that case there is exactly one.

\subsection{Chebyshev centering}\label{subsec:centering-setup}

Assume now that $M$ is tropically nonsingular with unique
optimal permutation~$\sigma$.
On the full-dimensional cell $\Cell(\Gamma_\sigma)$, the
witness equalities $f(c)+g(\sigma(c))=M(c,\sigma(c))$
pin $g$ to~$f$:
\begin{equation}\label{eq:g-from-f-sigma}
	g(\sigma(c))=M(c,\sigma(c))-f(c),\qquad c\in\cat C.
\end{equation}
The gap matrix on this cell is
\[
	\delta^{(f,g)}(c,d)=M(c,d)-f(c)-g(d),
\]
with $\delta^{(f,g)}(c,\sigma(c))=0$ for all $c$ and
$\delta^{(f,g)}(c,d)>0$ for $d\ne\sigma(c)$ at every interior
point of $\Cell(\Gamma_\sigma)$.
The Events Theorem identifies each positive gap entry with
the projective distance to the corresponding event locus.
List the $n(n-1)$ positive gap values in nondecreasing order:
\[
	e_1(f,g)\le e_2(f,g)\le\cdots\le e_{n(n-1)}(f,g).
\]
Then $e_1$ is the distance from $(f,g)$ to the nearest cell
wall.
A \emph{Chebyshev center} of $\Cell(\Gamma_\sigma)$ is any
point maximizing~$e_1$, and the \emph{Chebyshev radius} is
\[
	r^*:=\max_{(f,g)\in\Cell(\Gamma_\sigma)}e_1(f,g).
\]
Since $e_1=0$ on $\partial\Cell(\Gamma_\sigma)$ and $e_1>0$ in
the interior, every Chebyshev center lies in the open cell
$\Cell^\circ(\Gamma_\sigma)$.

\subsection{The Chebyshev LP}\label{subsec:chebyshev-lp}

Substituting~\eqref{eq:g-from-f-sigma} into the gap formula,
the gap at an off-diagonal pair $(c,\sigma(c'))$ with $c\ne c'$
is
\begin{equation}\label{eq:gap-reduced}
	\delta^{(f,g)}(c,\sigma(c'))
	=\alpha(c,c')+f(c')-f(c),
\end{equation}
where
\begin{equation}\label{eq:alpha-def}
	\alpha(c,c')
	:=M(c,\sigma(c'))-M(c',\sigma(c'))
\end{equation}
depends only on $M$ and $\sigma$.
Note that $\alpha(c,c')=w(c'\to c)$, the edge weight of the
auxiliary digraph from the proof of
Proposition~\ref{prop:admissible-permutations} with the
direction reversed: the shortest-path potentials constructed
there satisfy $\alpha(c,c')+f(c')-f(c)\ge 0$, which is exactly
the condition $\delta\ge 0$ on the off-diagonal gaps.

Since $\sigma$ is a bijection, the off-diagonal pairs $(c,d)$
with $d\ne\sigma(c)$ are exactly the pairs $(c,\sigma(c'))$
with $c\ne c'$.
In the gauge slice $f(c_0)=0$, the Chebyshev problem becomes
the linear program
\begin{equation}\label{eq:chebyshev-lp}
	\begin{aligned}
		\text{maximize}\quad   & r
		\\
		\text{subject to}\quad
		                       & \alpha(c,c')+f(c')-f(c)\ge r,
		\quad c\ne c',
		\\
		                       & f(c_0)=0.
	\end{aligned}
\end{equation}
This LP has $n$ variables---$f(c_1),\ldots,f(c_{n-1})$ and
$r$---and $n(n-1)$ inequality constraints.

\subsection{Statement and proofs}\label{subsec:centering-proofs}

\begin{theorem}[The Centering
Theorem]\label{thm:centering}
	Let $M$ be a real $n\times n$ matrix with a unique optimal
	permutation~$\sigma$, and let
	$\alpha(c,c')=M(c,\sigma(c'))-M(c',\sigma(c'))$.
	\begin{enumerate}
		\item[\textup{(a)}]
		      The Chebyshev radius $r^*$ equals the minimum
		      directed cycle mean of the digraph
		      $(\cat C,\alpha)$:
		      \[
			      r^*=\min_\gamma\frac{1}{|\gamma|}
			      \sum_{(c,c')\in\gamma}\alpha(c,c'),
		      \]
		      where $\gamma$ ranges over all directed cycles in
		      the complete graph on~$\cat C$.
		\item[\textup{(b)}]
		      At every vertex of the \textup{(}convex\textup{)}
		      set of Chebyshev centers, the multiplicity of
		      $e_1$ is at least~$n$.
		\item[\textup{(c)}]
		      For $M$ outside a proper algebraic subset of
		      $\R^{n\times n}$, the Chebyshev center is unique
		      and the multiplicity of $e_1$ is exactly~$n$.
	\end{enumerate}
\end{theorem}

\begin{proof}[Proof of \textup{(a)}]
	The constraints in~\eqref{eq:chebyshev-lp} read
	$f(c)-f(c')\le\alpha(c,c')-r$ for all $c\ne c'$.
	This is a system of \emph{difference constraints}: upper
	bounds on pairwise differences of the variables~$f(c)$.
	By a standard result in combinatorial optimization
	\cite[Corollary~8.3b]{Schrijver2003}, such a system is
	feasible if and only if the weighted digraph with edge
	weights $\alpha(c,c')-r$ has no negative-weight directed
	cycle.
	A cycle $\gamma$ of length $|\gamma|$ has nonnegative total
	weight if and only if
	$\sum_{(c,c')\in\gamma}(\alpha(c,c')-r)\ge 0$,
	equivalently
	$r\le(1/|\gamma|)\sum_{(c,c')\in\gamma}\alpha(c,c')$,
	the cycle mean of~$\gamma$.
	This holds for every directed cycle if and only if
	$r\le\min_\gamma\operatorname{mean}(\gamma)$.
	Hence $r^*=\min_\gamma\operatorname{mean}(\gamma)$.
\end{proof}

\begin{proof}[Proof of \textup{(b)}]
	The LP~\eqref{eq:chebyshev-lp} lives in $\R^n$ with
	$n(n-1)$ inequality constraints.
	The set of optimal solutions is a face of the feasible
	polyhedron.
	At any vertex of this face, at least $n$ linearly
	independent inequality constraints are active.
	Each active constraint
	$\alpha(c,c')+f^*(c')-f^*(c)=r^*$ corresponds,
	via~\eqref{eq:gap-reduced}, to an off-diagonal pair
	$(c,\sigma(c'))$ with
	$\delta^{(f^*,g^*)}(c,\sigma(c'))=r^*=e_1$.
	Hence the multiplicity of~$e_1$ is at least~$n$.
\end{proof}

\begin{proof}[Proof of \textup{(c)}]
	LP nondegeneracy---the condition that no vertex of the
	feasible polyhedron has more than $n$ active inequality
	constraints---fails on a proper algebraic subset of the
	space of $\alpha$-coefficients, hence of $\R^{n\times n}$.
	For nondegenerate instances, the optimal vertex is unique
	and exactly $n$ constraints are active.
\end{proof}

\subsection{A square example}\label{subsec:square-example}

The running $3\times 4$ example is not square.
To illustrate the Centering Theorem, consider the
$3\times 3$ matrix
\[
	M=
	\begin{bmatrix}
		1 & 1 & 6 \\
		6 & 3 & 1 \\
		1 & 6 & 5
	\end{bmatrix}.
\]
The unique optimal permutation is the $3$-cycle
$\sigma=(1\,2\,0)$, with
$\operatorname{val}(M)=M(0,1)+M(1,2)+M(2,0)=1+1+1=3$,
so $M$ is tropically nonsingular and
$\Cell(\Gamma_\sigma)$ is the unique full-dimensional cell.
On this cell the three witness equalities
$f(c)+g(\sigma(c))=M(c,\sigma(c))$ pin
$g$ to~$f$; in the gauge $f(c_0)=0$ with
$f_1=f(c_1)$, $f_2=f(c_2)$, the six off-diagonal
gap functions become
\begin{gather*}
	\delta(0,0) = f_2,\qquad
	\delta(1,1) = 2-f_1,\qquad
	\delta(2,1) = 5-f_2,\\
	\delta(0,2) = 5+f_1,\qquad
	\delta(1,0) = 5-f_1+f_2,\qquad
	\delta(2,2) = 4+f_1-f_2.
\end{gather*}
Requiring each to be non-negative gives the cell.
Four of the six inequalities are active:
\[
	f_2\ge 0,\qquad
	f_1\le 2,\qquad
	f_2\le 5,\qquad
	f_2-f_1\le 4;
\]
the remaining two ($f_1\ge -5$ and $f_1-f_2\le 5$)
are redundant.
The cell is therefore a quadrilateral with vertices
$(-4,0)$, $(2,0)$, $(2,5)$, $(1,5)$.

\begin{figure}[ht]
	\centering
	\includegraphics[width=0.88\linewidth]{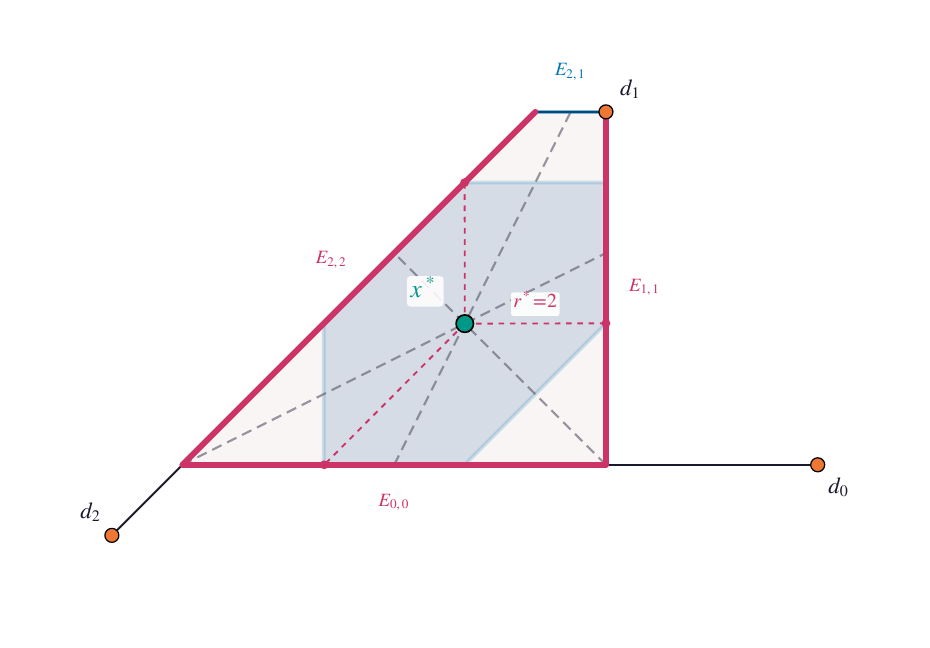}
	\caption{The Chebyshev center of
		$\Cell(\Gamma_\sigma)$ for the $3\times 3$
		matrix~$M$ of \S\ref{subsec:square-example},
		shown in the gauge $f(c_0)=0$.
		The full-dimensional cell (shaded quadrilateral)
		has four walls, one for each active Isbell
		inequality.
		Three walls (bold, $E_{0,0}$, $E_{1,1}$, $E_{2,2}$)
		are tight at the center~$x^*$, meaning
		$\delta=r^*=2$; the fourth
		($E_{2,1}$, lighter) has $\delta=3>r^*$.
		The inscribed hexagon is the spread-metric
		ball of radius~$r^*$.
		Dashed lines from~$x^*$ to the three tangent
		points confirm equidistance.
		Three order-chamber walls through~$x^*$
		partition the cell into regions of constant
		gap ordering.
		Column anchors $d_0$, $d_1$, $d_2$ are the
		images of the columns of~$M$ in~$\TP^2$.}
	\label{fig:chebyshev_center}
\end{figure}

The $\alpha$-matrix
$\alpha(c,c')=M(c,\sigma(c'))-M(c',\sigma(c'))$ is
\[
	\alpha=
	\begin{bmatrix}
		\cdot & 5 & 0 \\
		2 & \cdot & 5 \\
		5 & 4 & \cdot
	\end{bmatrix}.
\]
The five directed cycle means are:
\[
	\begin{aligned}
		\mu(0\to 1\to 0) &= \tfrac{5+2}{2}=3.5,\\
		\mu(0\to 2\to 0) &= \tfrac{0+5}{2}=2.5,\\
		\mu(1\to 2\to 1) &= \tfrac{5+4}{2}=4.5,
	\end{aligned}
	\qquad
	\begin{aligned}
		\mu(0\to 1\to 2\to 0) &= \tfrac{5+5+5}{3}=5,\\
		\mu(0\to 2\to 1\to 0) &= \tfrac{0+4+2}{3}=2.
	\end{aligned}
\]
The minimum is $r^*=2$, achieved uniquely by the
$3$-cycle $0\to 2\to 1\to 0$.
Solving the difference-constraint system
$f(c)-f(c')\le\alpha(c,c')-r^*$ via shortest-path
potentials gives the Chebyshev center
$f^*=(0,0,2)$, $g^*=(-1,1,1)$.
The gap matrix at the center is
\[
	\delta^{(f^*,g^*)}=
	\begin{bmatrix}
		2 & 0 & 5 \\
		7 & 2 & 0 \\
		0 & 3 & 2
	\end{bmatrix},
\]
whose zero entries sit at the $\sigma$-positions
$(0,1)$, $(1,2)$, $(2,0)$.
As predicted by part~(b) of the Centering Theorem,
exactly $n=3$ off-diagonal gaps achieve the value
$r^*=2$: these are
$\delta(0,0)$, $\delta(1,1)$, and~$\delta(2,2)$,
corresponding to the event loci $E_{0,0}$, $E_{1,1}$,
and~$E_{2,2}$.
The center is equidistant from these three walls
in the Hilbert projective metric, and the inscribed
hexagonal ball of radius~$r^*$ is tangent to each
(Figure~\ref{fig:chebyshev_center}).

\section{Conclusion}\label{sec:conclusion}

A real matrix $M$ is a coordinate presentation of an intrinsic geometric object: its Isbell nucleus.
The projective nucleus $\pnuc(M)$ carries a Hilbert projective metric and a witness-cell decomposition, both arising directly from the enriched-categorical structure, and both invariant under the external gauge transformations that express the freedom in choosing a coordinate presentation.
The gap matrix $\delta^{(f,g)}$ ties these two geometries together pointwise.

\subsection{What the gap matrix organizes}
For a nucleus point $(f,g)$, the gap matrix
$\delta^{(f,g)}(c,d)=M(c,d)-f(c)-g(d)$
is a nonnegative matrix that simultaneously plays three roles.
Its zero pattern determines the witness cell containing $(f,g)$.
Its positive entries are the exact projective distances to the event loci where new witnesses appear (the Events Theorem).
And thresholding it at successive radii extracts a tower of formal concept lattices, capturing the discrete combinatorial structure visible from $(f,g)$ at each resolution.

The gap matrix can therefore be regarded as a local coordinate system for the polyhedral geometry, expressed in the projective metric, that at the same time organizes the lattice-theoretic shadow of the surrounding cell structure.

\subsection{Discrete algebraic structure from continuous geometry}
The lattice-tower construction in Section~\ref{subsec:polyhedral_pnuc} extracts discrete algebraic objects---formal concept lattices with joins, meets, and Galois connections---from the continuous projective geometry of $\pnuc(M)$.
The key point is that the correct thresholding is not performed on the matrix $M$ directly, but on the gap matrix $\delta^{(f,g)}$ at a chosen basepoint.
This pointed thresholding is geometrically meaningful: it records which event loci lie within a given projective radius.
Proposition~\ref{prop:threshold-distance} makes this precise, and Proposition~\ref{prop:chamberwise-tower} shows that on each order chamber the real-parameter family factors through a finite tower of concept lattices with canonical floor mergers.
Moving to a face of the chamber complex merges consecutive floors, producing canonical specialization maps (Proposition~\ref{prop:face-specialization}).

The order-chamber complex thus carries a constructible sheaf of lattice towers, determined by the projective geometry of $\pnuc(M)$, in which each lattice encodes the Galois-closed dependencies among row--column incidences visible at a given resolution.

\subsection{Chebyshev centering and minimum cycle means}
The Centering Theorem (Theorem~\ref{thm:centering}) adds a further layer to the interaction between metric and polyhedral structure.
While the Events Theorem measures how far a given point is from each cell wall, the Centering Theorem identifies the point that maximizes the minimum such distance.
The optimal radius turns out to equal the minimum directed cycle mean of an edge-weighted digraph derived from the matrix, connecting the projective geometry of the nucleus to the classical theory of optimal assignment~\cite{Kuhn1955,BurkardDellAmicoMartello2009} and to Karp's characterization of minimum cycle means~\cite{Karp1978}.
At the Chebyshev center, the smallest gap $e_1$ is achieved with multiplicity at least $n$---generically exactly $n$---so the center is equidistant from $n$ event loci, analogous to the incenter of a simplex.
The pair sums $S(c,c')=M(c,\sigma(c'))+M(c',\sigma(c))-M(c,\sigma(c))-M(c',\sigma(c'))$, being intrinsic to the matrix, obstruct higher multiplicities for~$e_2$ and explain why only $e_1$ generically exhibits this phenomenon.

\subsection{Further directions}
Several directions remain open.
Beyond the finite discrete case treated here, it would be interesting to understand what replaces the witness-cell and order-chamber stratifications for general small $\Rbar$-categories, and whether the Events Theorem has an analogue in that setting.

Within the discrete setting, the chamberwise lattice towers suggest computable invariants of the nucleus that go beyond the cell structure alone.
As the basepoint moves, concepts are born and die at different event radii; tracking these births and deaths produces persistence-type summaries of the lattice changes.
In applications where the matrix $M$ arises from data, the lattice tower at a point provides a structured decomposition of the data into formal concepts at varying resolutions, with the Galois connections at each level encoding principled algebraic operations (joins and meets) on the resulting clusters.
Developing these invariants in examples is a natural next step.

A further direction concerns nuclei endowed with additional compatible structure.
When the profunctor is compatible with monoidal data, the nucleus inherits richer operations relevant to linear realizability; see \cite{Jarvis2025NucleusProfunctor,seiller-hdr,GastaldiJarvisSeillerTerillaLinearRealizability}.
The interaction between these monoidal structures and the projective metric geometry of the present paper remains to be explored.

\bibliography{biblio}

\end{document}